\numberwithin{equation}{section}
\newtheorem{theorem}{Theorem}[section]
\newtheorem{proposition}[theorem]{Proposition}
\newtheorem{definition}[theorem]{Definition}
\newtheorem{corollary}[theorem]{Corollary}
\newtheorem{lemma}[theorem]{Lemma}
\theoremstyle{remark}
\newtheorem{remark}[theorem]{Remark}
\newtheorem{example}[theorem]{\bf Example}
\newcommand{\R}{\mathbb{R}}
\newcommand{\C}{\mathbb{C}}
\newcommand{\D}{\mathbb{D}}
\newcommand{\E}{\mathbb{E}}
\newcommand{\T}{\mathbb{T}}
\newcommand{\dd}{\mathrm{d}}
\newcommand{\FC}{\mathcal{C}}
\newcommand{\violet}{\textcolor{violet}}
\newcommand{\Str}{\mathbb{S}}
\begin{document}
\title[Harmonic maps of finite uniton type ]{\bf{Harmonic maps of finite uniton type into inner symmetric spaces}}
\author{Josef F. Dorfmeister, Peng Wang}
\maketitle

\begin{abstract}
In this paper, we develop a loop group description of harmonic maps $\mathcal{F}: M \rightarrow G/K$ ``of finite uniton type",  from a Riemann surface $M$ into  inner symmetric spaces of compact or non-compact type.  {This develops work of Uhlenbeck, Segal, and Burstall-Guest to non-compact inner symmetric spaces.} To be more concrete, we prove that every harmonic map of finite uniton type from any  Riemann surface into any  {compact or non-compact} inner symmetric space has a normalized potential taking values in some nilpotent Lie subalgebra, as well as a normalized frame with initial condition identity. This provides a straightforward way to construct all such harmonic maps. We also illustrate the above results exclusively by Willmore surfaces, since this  problem is motivated by the study of Willmore two--spheres in spheres.
 \vspace{2mm}

\end{abstract}

{\bf Keywords:}  harmonic maps of finite uniton type; non-compact inner symmetric spaces; normalized potential; Willmore surfaces.\\

MSC(2010): 58E20; 53C43;  53A30; 53C35

\tableofcontents
\section{Introduction}

 Harmonic maps from Riemann surfaces into symmetric spaces arise naturally in geometry and mathematical physics and hence became important objects in several mathematical fields, including  the study of  minimal surfaces, CMC surfaces, Willmore surfaces and related integrable systems. For harmonic maps into compact symmetric spaces or compact Lie groups, one of the most foundational and important papers is the  {description} of all harmonic two spheres into $U(n)$ by Uhlenbeck \cite{Uh}. It was shown that harmonic two-spheres satisfy a very  restrictive condition. Uhlenbeck coined the expression  `` finite uniton number'' for this property. Since the uniton number is an integer, one also obtains this way a subdivision of harmonic maps into  $U(n)$.

 Uhlenbeck's work was generalized in a very elegant way to harmonic two spheres in all compact semi-simple Lie groups  {and into all compact inner symmetric spaces} by Burstall and Guest in \cite{BuGu}. Using  Morse theory for loop groups in the spirit of Segal's work, they showed that  {harmonic maps of finite uniton number} in compact Lie groups can be related to  {meromorphic } maps into  (finite dimensional)  nilpotent  Lie algebras. They also provided a concrete method to find all such nilpotent Lie algebras. Finally, via the Cartan embedding, harmonic maps into compact inner symmetric spaces are considered as harmonic maps into Lie groups satisfying some algebraic ``twisting'' conditions. Therefore the theory of Burstall and Guest provides a description of  {harmonic maps of finite uniton number} into compact Lie groups and compact inner symmetric spaces, in a way which is not only theoretically satisfying, but can also be implemented well for concrete computations \cite{Gu2002}.

 {Let us take a somewhat closer look at the paper \cite{BuGu}. The title and the introduction of loc.cit. only refer to harmonic maps $\mathcal F: S^2 \rightarrow \Str$, where $\Str$ is a compact  inner symmetric space. A large part of the body of the paper, however, deals with harmonic maps  $\mathcal F: M \rightarrow \Str$, where $\Str$ is as above and $M$ is an arbitrary, connected, compact or non-compact, Riemann surface satisfying the following conditions:
 \begin{itemize}
 \item There exists an extended solution $\Phi(z, \bar z, \lambda): M \rightarrow \Str$, $\lambda \in S^1$,
 such that
\[  \hbox{$\Phi$ has a finite uniton number $k \geq 0$.}\]
 \end{itemize}
 Such harmonic maps are said to be of finite uniton number $k$ (See Section 1 of \cite{BuGu} for more details).}

  {In the present paper we use the DPW method to investigate harmonic maps into non-compact  inner symmetric spaces. Therefore we consider extended frames, not extended solutions.}   These frames are defined on the universal cover $\tilde{M}$ of $M$.
   { (One can  include, with some caveat, the case $M = S^2$,  see Section 3 of \cite{DoWa12} and
  $(2)$ of Remark \ref{sphere}).}
   {The two conditions above defining finite uniton  number harmonic maps  translate into the following two properties of  extended frames}  {(see Proposition \ref{typeequivnumber} and Proposition \ref{prop-fut}) :}
  \begin{theorem}
  {Let  $\mathcal F: M\rightarrow \Str$ be a harmonic map from a connected Riemann surface $M$ into  a compact, inner symmetric space.
 Then $\mathcal F$ has a finite uniton number $($$0 \leq k$ for some integer $k$$)$ if and only if}
 \begin{enumerate}
 \item  { There exists an extended frame $F : \tilde{M} \rightarrow \Lambda G_\sigma$
 for $\mathcal F$ which has  trivial monodromy relative to the action of $\pi_1(M)$ on $\tilde{M}$.}
\item  {There exists some frame  $F : \tilde{M} \rightarrow \Lambda G_\sigma$ for $\mathcal F$,  whose Fourier expansion relative to $\lambda$ is a Laurent polynomial.}
 \end{enumerate}
Here $ \Lambda G_\sigma$  denotes the twisted loop group associated to $\Str$.
  \end{theorem}

   Note that property $(1)$ for some extended frame is equivalent to property $(1)$ for all extended frames.  Harmonic maps $\mathcal F$, as well as the corresponding extended frames, are said  to be of {\bf finite uniton type} if the properties $(1)$ and $(2)$ are satisfied.
   We will apply the notion of ``finite uniton type" analogously, if $\Str$ is a non-compact symmetric space (see Definition \ref{def-uni}). A discussion of the properties $(1)$ and $(2)$  will be given in Section 3.   Both properties together (i.e. the case of finite uniton type harmonic maps) will be investigated in Section 4. Moreover, in
  Proposition \ref{typeequivnumber} we will show that for a compact Riemann surface $M$ harmonic maps of finite uniton type are  in a bijective relation with  finite uniton number harmonic maps  in the sense of Uhlenbeck \cite{Uh} (see also \cite{BuGu}).

 {While in the literature  primarily harmonic maps $\mathcal F: M\rightarrow \Str$ were considered, where $\Str$ is a compact symmetric space,  in the theory of Willmore surfaces in $S^n$, and many other surface classes, one has to deal with ``Gauss type maps" which are harmonic maps into non-compact symmetric spaces. }
It is the general goal of this paper to generalize results of \cite{BuGu} to harmonic maps of finite uniton type into a non-compact inner symmetric space. In particular, we want to describe simple potentials (in the sense of  {the  DPW method}) which generate such surfaces. For a compact inner symmetric target space this task has been carried out fairly explicitly in \cite{BuGu}, see subsections \ref{f.u.alaBuGu} and \ref{BuGu<->DPW}  below for a description in our notation.

For the case of a non-compact symmetric target space no such description is known yet. Thus one looks for an approach which permits to apply the work of \cite{BuGu} in such a way that one can also find simple potentials for the case of a  non-compact inner symmetric target space of a harmonic map.
  In \cite{DoWa13}, when dealing with harmonic maps into $SO^+(1,n+3)/SO(1,3)\times SO(n)$, the authors found a simple way to relate harmonic maps into a  non-compact inner symmetric space $G/K$  to harmonic maps into the  {dual compact} inner symmetric space $U/(U\cap K^{\C})$ dual to $G/K$.
  These two harmonic maps have a simple, but very important  relationship:  they share the  {same} meromorphic extended  framing and the normalized potential (see Theorem 1.1 of \cite{DoWa13} and Theorem \ref{thm-noncompact} in this paper). Here the normalized  extended framing and the normalized potential are meromorphic  data related to a harmonic map in terms of the language of the DPW method \cite{DPW}, which is a generalized Weierstrass type representation for harmonic maps into symmetric spaces.

   Interpreting the work of Burstall and Guest, one will see that for harmonic maps of finite uniton type into a compact symmetric space, their work considers normalized potentials which take values  in some (fixed) nilpotent Lie subalgebra (of the originally given finite dimensional complex Lie algebra) and their extended meromorphic frames take values in the  loop group of the corresponding unipotent Lie subgroup.   Therefore, for each fixed value of the loop parameter these meromorphic extended frames
take values in  the  finite dimensional unipotent Lie group mentioned   above
(see  Theorem \ref{thm-finite-uniton2}  {in this paper}, or  Theorem 1.11 \cite{Gu2002}).
 It thus turns out that by combining the dualization procedure of  \cite{DoWa13} with the  {grouping by the uniton number} of finite uniton number harmonic maps of \cite{BuGu}, one is able to characterize all harmonic maps of finite uniton type  into non-compact inner symmetric spaces by characterizing all the normalized extended  frames and the normalized potentials of harmonic maps of finite uniton type into compact inner symmetric spaces, which, according to the theory of Burstall and Guest, can be
  {described precisely.} Simply speaking, the case of a harmonic map into a non-compact inner symmetric space $G/K$ comes exactly from the case of a harmonic map into the compact dual inner symmetric space $U/(U\cap K^{\C})$ by choosing the same normalized potential for both harmonic maps, but using the two different real forms $G$ and $U$ of $G^\C$ for the loop group construction of the corresponding harmonic maps  {(see Theorem \ref{thm-finite-uniton-n-com})}.

From a technical point of view  it is important to observe (as pointed out above already) that in \cite{BuGu} the construction of harmonic maps uses  ``extended solutions", while the loop group method  uses extended frames. It is therefore a priori difficult to relate these two construction schemes to each other. {For the convenience of the reader and to fix notation we start a comparison of these two methods by recalling the relationship
between the extended solutions and the extended frames associated to a  harmonic map into a symmetric space. Then we introduce  the main results of Burstall and Guest on harmonic maps of finite uniton type \cite{BuGu}, as well as a description of their work in terms of {\em normalized potentials}, some of which has appeared in \cite{BuGu} and \cite{Gu2002}. Applying the duality theorem \cite{DoWa13}, we obtain the Burstall-Guest theory for the cases of non-compact inner symmetric spaces.}
 {Both theories occurring in this paper consider group valued (actually ``matrix valued") function systems satisfying certain (partial) differential equations in the space variable with dependence on some ``loop parameter".
The ``extended solution approach" only fixes the solutions for two values of the loop parameter, while the ``extended frame approach = DPW method" fixes values of these frames for each loop parameter at a fixed basepoint in the domain of definition. As a simple consequence, the DPW method works with unique solutions and essentially bijective relations between potentials and harmonic maps. For simplicity we frequently say : the frame $F$ has initial condition $F(z_0, \bar z_0,\lambda)$, when we should spell out more explicitly : the initial condition of the frame $F(z, \bar z,\lambda)$ at the base point $z_0$ is $F(z_0, \bar z_0,\lambda)$.}

 {Finally, here comes the problem of initial conditions in the study of harmonic maps of  finite uniton type:  This turns out be crucial in the cases of non-compact inner symmetric spaces, although it is not a big problem in the compact cases. It comes from the fact that the Iwasawa decomposition for compact loop groups  is global while the Iwasawa decomposition for non-compact ones  is local (See for example Theorem \ref{thm-decomposition}). Also note that the freedom of initial conditions  also  is equivalent to the freedom of special dressing actions. In this sense, a fixed initial condition will simplify the classification of harmonic maps of finite uniton type further, which makes it more simple to derive geometric properties of harmonic maps via normalized potentials. A standard example is the description of minimal surfaces in $\R^n$ via potentials in \cite{Wang-2}.}  {In Theorem \ref{thm-finite-uniton-in} we can show that the initial condition of the meromorphic extended  frame can be set without loss of generality to be identity.}

Our main motivation for the study of such harmonic maps is to provide the background for a detailed study of a wide variety of different types  of Willmore surfaces in $S^n$, surfaces of compact or non-compact type. As an application of this paper a rough classification of Willmore two-spheres  (whose conformal Gauss maps  take values in  the non-compact symmetric space $SO^+(1,n+3)/SO(1,3)\times SO(n)$) has been worked out in \cite{Wang-1}. For the convenience of the reader we  include the main result of \cite{Wang-1} by presenting its coarse classification of Willmore two-spheres in $S^{2n}$, in terms of the normalized potentials of their conformal Gauss maps. Moreover, we also present a new Willmore two-sphere constructed by using \cite{DoWa11,DoWa12} and the results of this paper. This example also responds to an open problem posed by  Ejiri in 1988 \cite{Ejiri1988}. \vspace{2mm }

This paper is organized as follows. In Section 2 we review the basic results of the loop group theory for harmonic maps.
In Section 3, we provide a detailed description of harmonic maps of finite uniton type.
Several equivalent definitions  are given for such maps.
Moreover, we also discuss briefly the monodromy and dressing actions for harmonic maps of finite uniton type.  {In Section 4 we first introduce the main results of Burstall and Guest on harmonic maps of finite uniton type \cite{BuGu}, as well as a description of their work in terms of {\em normalized potentials}, most of which have appeared in \cite{BuGu} and \cite{Gu2002}. Then we prove that for harmonic maps of finite uniton numbers into compact inner symmetric spaces, the initial condition of the extended frame and the extended meromorphic frame can be identity at some chosen base point in $M$.
 Finally in Section 5, we first apply the above result to get the same description of harmonic maps of finite uniton type into non-compact inner symmetric spaces. As an illustration, an outline of applications to the study of Willmore surfaces is listed.} \\

\section{Review of basic loop group theory}

 For any  inner involution $\sigma$ of a semi-simple  {real} Lie group $G$ the center of $G$ is contained in the connected component of the fixed point set of $\sigma$.
To begin with, we first recall some  notation. Let $G$ be a connected real semi-simple Lie group, compact or non-compact, represented as a matrix Lie group. Let $G/K$ be an inner symmetric space with the involution
$\sigma: G\rightarrow G$ such
that $G^{\sigma}\supset K\supset(G^{\sigma})^0$, where ``0" denotes ``identity component".
{\em  For the purposes of this paper the actual choice of $K$ will be of little importance. The reader may thus simply assume that $K = \hat{K} = G^\sigma$ holds.}
 In particular, we can assume without loss of generality that $G$ has trivial center.
 We will keep this assumption throughout this paper, except where we state the opposite.
Note that  {(the tangent bundle of) } $G/K$ carries a left-invariant non-degenerate symmetric bilinear form.
Let $\mathfrak{g}$ and $\mathfrak{k}$ denote the
Lie algebras of $G$ and $K$ respectively. The involution $\sigma$ induces
a decomposition of $\mathfrak{g}$ into eigenspaces,  the (generalized) Cartan decomposition
\[\mathfrak{g}=\mathfrak{k}\oplus\mathfrak{p},\hspace{5mm} \hbox{ with }\  [\mathfrak{k},\mathfrak{k}]\subset\mathfrak{k},
~~~ [\mathfrak{k},\mathfrak{p}]\subset\mathfrak{p}, ~~~
[\mathfrak{p},\mathfrak{p}]\subset\mathfrak{k}.\]
Let $\pi:G\rightarrow G/K$ denote the projection of $G$ onto $G/K$.

Now let  $\mathfrak{g^{\mathbb{C}}}$ be the complexification of $\mathfrak{g}$ and $G^{\mathbb{C}}$  the connected complex (matrix) Lie group with Lie algebra $\mathfrak{g^{\mathbb{C}}}$.
Let $\tau$ denote the complex anti-holomorphic involution
$g \rightarrow \bar{g}$, of $G^{\mathbb{C}}$. Then $G=Fix^{\tau}(G^{\C})^0$.
The inner involution $\sigma: G \rightarrow G $ commutes with the complex conjugation $\tau$ and extends to
the complexified Lie group $G^\C$, $\sigma: G^{\mathbb{C}}\rightarrow G^{\mathbb{C}}$. Let $K^{\mathbb{C}}\subset \hbox{Fix}^{\sigma}(G^{\mathbb{C}})$ denote the smallest complex subgroup of $G^{\C}$ containing $K$. Then the Lie algebra of $K^{\mathbb{C}}$ is
$\mathfrak{k^{\mathbb{C}}}$.

Occasionally we will also use another complex anti-linear involution, $\theta$, which commutes with $\sigma$ and $\tau$ and has as fixed point set
$ Fix^{\theta}(G^{\C})$,   a  maximal compact subgroup of
$G^{\C}$. For more details on the basic setting we refer to \cite{DoWa13}.

\begin{remark}
In this paper we only consider inner symmetric spaces. However, several of our results
also hold for arbitrary symmetric spaces. To keep the presentation of the paper
as simple as possible we will not consider  { the case of outer symmetric spaces in any detail in this paper. }
\end{remark}


\subsection{Harmonic maps into symmetric spaces}

Let $G/K$ be  {an inner symmetric} space as above and let $\mathcal{F}:M\rightarrow G/K$ be a harmonic map  {where $M$ is a connected Riemann surface.}
  {\bf In this paper we will always assume  that $\mathcal{F}$ is ``full"}   {in the following sense. We will mention the assumption "full" only where it seems to be particularly important.}

 {
\begin{definition} \label{deffull}
A harmonic map $\mathcal{F}:M\rightarrow G/K$ is called ``full"  if only $g = e$ fixes every element
of $\mathcal{F}(M)$.
That is,  if there exists $g\in G$ such that $g \mathcal{F}(p)=\mathcal{F}(p)$ for all
$p\in M$, then $g=e$.
\end{definition}}

 {Let $\tilde{\pi} : \tilde{M} \rightarrow M$ be the universal cover of $M$ and $z_0 \in \tilde{M}$ satisfying $\tilde{\pi}(z_0) = p_0$.
Then $\mathcal{F}$ has a natural lift $\tilde{\mathcal{F}}: \tilde{M} \rightarrow G/K$ satisfying
$\tilde{\mathcal{F}} = \mathcal{F} \circ \tilde{\pi}$ and obviously  $\tilde{\mathcal{F}}(z_0,\bar z_0) = eK$. Moreover, there exists a frame $F: \tilde{M} \rightarrow G$ such that $\tilde{\mathcal{F}}=\pi \circ F$ and $F(z_0,\bar z_0) = e.$}

Let $\alpha$ denote the Maurer-Cartan form of $F$. Then $\alpha$ satisfies the Maurer-Cartan equation
and we have
\begin{equation*}F^{-1}\mathrm{d} F= \alpha, \hspace{5mm} \mathrm{d} \alpha+\frac{1}{2}[\alpha\wedge\alpha]=0.
\end{equation*}
Decomposing $\alpha$ with respect to $\mathfrak{g}=\mathfrak{k}\oplus\mathfrak{p}$ we obtain
\[\alpha=\alpha_{ \mathfrak{k}  } +\alpha_{ \mathfrak{p} }, \
\alpha_{\mathfrak{k  }}\in \Gamma(\mathfrak{k}\otimes T^*M),\
\alpha_{ \mathfrak{p }}\in \Gamma(\mathfrak{p}\otimes T^*M).\] Moreover, considering the complexification $TM^{\mathbb{C}}=T'M\oplus T''M$, we decompose $\alpha_{\mathfrak{p}}$ further into the $(1,0)-$part $\alpha_{\mathfrak{p}}'$ and the $(0,1)-$part $\alpha_{\mathfrak{p}}''$. Set  \begin{equation} \label{alphalambda}
\alpha_{\lambda}=\lambda^{-1}\alpha_{\mathfrak{p}}'+\alpha_{\mathfrak{k}}+\lambda\alpha_{\mathfrak{p}}'', \hspace{5mm}  \lambda\in S^1.
\end{equation}

\begin{lemma} $($\cite{DPW}$)$ The map  $\mathcal{F}:M\rightarrow G/K$ is harmonic if and only if
\begin{equation}\label{integr}\mathrm{d}
\alpha_{\lambda}+\frac{1}{2}[\alpha_{\lambda}\wedge\alpha_{\lambda}]=0,\ \ \hbox{for all}\ \lambda \in S^1.
\end{equation}
\end{lemma}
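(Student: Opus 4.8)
The plan is to exploit that the lift $F$ has Maurer--Cartan form $\alpha=F^{-1}dF$, which \emph{automatically} satisfies $d\alpha+\tfrac12[\alpha\wedge\alpha]=0$, and to read off harmonicity from the components of this identity. The key structural observation is that $\alpha_\lambda=\lambda^{-1}\alpha_{\mathfrak{p}}'+\alpha_{\mathfrak{k}}+\lambda\alpha_{\mathfrak{p}}''$ is a Laurent polynomial in $\lambda$, so $d\alpha_\lambda+\tfrac12[\alpha_\lambda\wedge\alpha_\lambda]$ has $\lambda$-exponents only in $\{-2,-1,0,1,2\}$; for fixed tangent vectors this is a trigonometric polynomial in the angle of $\lambda\in S^1$, hence it vanishes for all $\lambda$ if and only if each coefficient vanishes separately. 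I would therefore expand and match powers of $\lambda$.

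Writing $A=\alpha_{\mathfrak{p}}'$, $B=\alpha_{\mathfrak{k}}$, $C=\alpha_{\mathfrak{p}}''$ and using the identity $[\omega\wedge\eta]=[\eta\wedge\omega]$ valid for Lie-algebra-valued $1$-forms, the coefficients are: $\tfrac12[A\wedge A]$ at $\lambda^{-2}$ and $\tfrac12[C\wedge C]$ at $\lambda^{2}$; $dA+[B\wedge A]$ at $\lambda^{-1}$ and $dC+[B\wedge C]$ at $\lambda^{1}$; and $dB+\tfrac12[B\wedge B]+[A\wedge C]$ at $\lambda^{0}$. I would then invoke the type decomposition on the Riemann surface $M$: $A$ is of type $(1,0)$ and $C=\overline{A}$ of type $(0,1)$, while $B=\alpha_{\mathfrak{k}}'+\alpha_{\mathfrak{k}}''$.

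Step by step: the $\lambda^{\pm2}$ coefficients are of pure type $(2,0)$, resp. $(0,2)$, so they vanish identically because $M$ has complex dimension one, and carry no information. For the same dimension reason the $(2,0)$- and $(0,2)$-parts of $[\alpha_{\mathfrak{p}}\wedge\alpha_{\mathfrak{p}}]$ vanish, giving $\tfrac12[\alpha_{\mathfrak{p}}\wedge\alpha_{\mathfrak{p}}]=[A\wedge C]$; the $\lambda^{0}$ coefficient then coincides exactly with the $\mathfrak{k}$-component of the Maurer--Cartan equation of $\alpha$, hence holds automatically. Using $\partial A=0$ and $[\alpha_{\mathfrak{k}}'\wedge A]=0$ (both type $(2,0)$), the $\lambda^{-1}$ coefficient reduces to $\bar\partial\alpha_{\mathfrak{p}}'+[\alpha_{\mathfrak{k}}''\wedge\alpha_{\mathfrak{p}}']$, and the $\lambda^{1}$ coefficient is its complex conjugate. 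Thus \eqref{integr} for all $\lambda\in S^1$ is equivalent to the single equation
\[
\bar\partial\alpha_{\mathfrak{p}}'+[\alpha_{\mathfrak{k}}''\wedge\alpha_{\mathfrak{p}}']=0.
\]

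It remains to identify this equation with the harmonicity of $\mathcal{F}$, which I expect to be the main point requiring care. I would start from the moving-frame form of the harmonic map equation into a symmetric space, namely $d{*}\alpha_{\mathfrak{p}}+[\alpha_{\mathfrak{k}}\wedge{*}\alpha_{\mathfrak{p}}]=0$, where $*$ is the Hodge star of $M$; on a Riemann surface ${*}\alpha_{\mathfrak{p}}'=-i\,\alpha_{\mathfrak{p}}'$ and ${*}\alpha_{\mathfrak{p}}''=i\,\alpha_{\mathfrak{p}}''$. Setting $P:=\bar\partial\alpha_{\mathfrak{p}}'+[\alpha_{\mathfrak{k}}''\wedge\alpha_{\mathfrak{p}}']$ and $Q:=\partial\alpha_{\mathfrak{p}}''+[\alpha_{\mathfrak{k}}'\wedge\alpha_{\mathfrak{p}}'']=\overline{P}$, the $(1,1)$-part of this harmonicity equation reads $P=Q$, while the $(1,1)$-part of the $\mathfrak{p}$-component of the Maurer--Cartan equation reads $P+Q=0$; the two together force $P=Q=0$, i.e. the displayed equation, and conversely $P=0$ gives both. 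The delicate points to get right are the Hodge-star sign conventions and the bookkeeping of bidegrees, together with citing the standard derivation of the moving-frame harmonic map equation (as in \cite{DPW}); once these are in place, everything else is forced by the vanishing of $(2,0)$- and $(0,2)$-forms on a one-dimensional complex manifold.
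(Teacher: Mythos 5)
Your proof is correct and is precisely the standard argument from \cite{DPW}, which the paper itself does not reproduce (the lemma is stated with a citation only): the $\lambda^{\pm 2}$ and $\lambda^{0}$ coefficients vanish automatically by type considerations and the $\mathfrak{k}$-part of the Maurer--Cartan equation, while the $\lambda^{\pm 1}$ coefficients combine with the $\mathfrak{p}$-part of Maurer--Cartan and the Hodge-star form of the harmonic map equation exactly as you describe. No gaps; the sign conventions for $*$ on $(1,0)$- and $(0,1)$-forms and the identity $[\omega\wedge\eta]=[\eta\wedge\omega]$ are used consistently.
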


\begin{definition}\label{def-1} Let $\mathcal{F}:M\rightarrow G/K$ be harmonic
and  {let $F: \tilde{M} \rightarrow G$ be a frame satisfying $\mathcal{F}=\pi \circ F$
and $F(z_0,\bar z_0) = e$, stated as above.}
Define $\alpha_{\lambda}$ as   {in \eqref{alphalambda} and consider
on $\tilde{M}$  a solution  $F(z,\bar z, \lambda), \lambda \in \C^*$,  to the equation }
\begin{equation}\label{eq-F-int}
 {\mathrm{d} F(z,\bar z, \lambda)= F(z,\bar z, \lambda)\alpha_{\lambda}, \hspace{2mm} F(z_0,\bar z_0,\lambda) = e.}
\end{equation}

 {As a consequence,  the choice of initial condition stated determines $F(*,*,\lambda)$ uniquely by $F$. Moreover,
we then also obtain  $F(z,\bar z, \lambda)|_{\lambda = 1} = F(z,\bar z)$
for all $z \in \tilde M$.
Any such solution will be called an  {\em extended frame} for the harmonic map $\mathcal{F}$.}

 {Finally we would like to point out that  $\mathcal F_{\lambda}:=F(z,\bar z, \lambda)\mod K$  gives a family of harmonic maps with $\mathcal F_{\lambda}|_{\lambda=1}=\mathcal F$. It will be called the {\bf associated family of $\mathcal F$}.}
 \end{definition}

\begin{remark}
 {In this paper, we will usually assume for a harmonic map the conventions introduced above.
In particular, we will assume (without further saying) the existence of a base point $z_0$ and an extended frame such that  {$F(z_0,\bar z_0, \lambda) = e$} holds. However, in Theorem 4.14 below we will encounter a case, where, a priori,  the initial condition may be necessarily much more involved.  {Fortunately}, in Section 4.5 it will be shown that ``it  suffices to consider the cases where the initial condition in Theorem \ref{thm-finite-uniton2} is the identity matrix".}
\end{remark}

\subsection{Loop groups and decomposition theorems}

 For the construction of harmonic maps we will always employ the loop group method.  In this context we consider the twisted loop groups of $G$ and $G^{\mathbb{C}}$
and some of their frequently occurring subgroups:
\begin{equation*}
\begin{array}{llll}
\Lambda G^{\mathbb{C}}_{\sigma} ~&=\{\gamma:S^1\rightarrow G^{\mathbb{C}}~|~ ,\
\sigma \gamma(\lambda)=\gamma(-\lambda),\lambda\in S^1  \},\\[1mm]
\Lambda G_{\sigma} ~&=\{\gamma\in \Lambda G^{\mathbb{C}}_{\sigma}
|~ \gamma(\lambda)\in G, \hbox{for all}\ \lambda\in S^1 \},\\[1mm]
\Omega G_{\sigma} ~&=\{\gamma\in \Lambda G_{\sigma}|~ \gamma(1)=e \},\\[1mm]
\Lambda^{-} G^{\mathbb{C}}_{\sigma}  ~&=
\{\gamma\in \Lambda G^{\mathbb{C}}_{\sigma}~
|~ \gamma \hbox{ extends holomorphically to }  {D_\infty \}},\\[1mm]
\Lambda_{*}^{-} G^{\mathbb{C}}_{\sigma} ~&=\{\gamma\in \Lambda G^{\mathbb{C}}_{\sigma}~
|~ \gamma \hbox{ extends holomorphically to }D_{\infty},\  \gamma(\infty)=e \},\\[1mm]
\Lambda^{+} G^{\mathbb{C}}_{\sigma} ~&=\{\gamma\in \Lambda G^{\mathbb{C}}_{\sigma}~
|~ \gamma \hbox{ extends holomorphically to }D_{0} \},\\[1mm]
\Lambda_{S}^{+} G^{\mathbb{C}}_{\sigma} ~&=\{\gamma\in
\Lambda^+ G^{\mathbb{C}}_{\sigma}~|~   \gamma(0)\in S \},\\[1mm]
\end{array}\end{equation*}
where $D_0=\{z\in \mathbb{C}| \ |z|<1\}$,  { $D_\infty=\{z\in \mathbb{C}| \ |z|>1\} \cup\{\infty\}$ }and $S$ is some subgroup of $K^\C$.
\vspace{2mm}

If the group $S$ is chosen to be $S = (K^\C)^0$, then we  {write $\Lambda_{\mathcal{C}}^{+} G^{\mathbb{C}}_{\sigma} $.}
If the group $S$ is chosen to be $S = \{e\}$, then we write  {$\Lambda_{\star}^{+} G^{\mathbb{C}}_{\sigma} $.}

  {In some cases subgroups $S$, different from the above, are chosen, like in cases where there exists a Borel subgroup. In such cases one can derive a unique decomposition of any loop group element. In other cases, like in \cite{DoWa12}, one can only derive unique decompositions
for elements in some open subset of the given loop group:
\begin{remark}
If $G = SO^+(1,n+3)$ and $K = SO^+(1,3)\times SO(n)$, then there exists a closed, connected solvable subgroup $S \subseteq (K^\C)^0$ such that
the multiplication $\Lambda G_{\sigma}^0 \times \Lambda^{+}_S G^{\mathbb{C}}_{\sigma}\rightarrow
{(\Lambda G^{\mathbb{C}}_{\sigma})^0}$ is a real analytic diffeomorphism onto the open subset
$ \Lambda G_{\sigma}^0 \cdot \Lambda^{+}_S G^{\mathbb{C}}_{\sigma}     \subset(\Lambda G^{\mathbb{C}}_{\sigma})^0$.
\end{remark}
}

We frequently use the following decomposition theorems
(see \cite{Ke1}, \cite{DPW}, \cite{PS}, \cite{DoWa12}).

\begin{theorem} \label{thm-decomposition}\
\begin{enumerate}
\item {\em (Iwasawa decomposition)}
\begin{enumerate}
\item
$ (\Lambda G^{\C})_{\sigma} ^0=
\bigcup_{\delta \in \Xi }( \Lambda G)_{\sigma}^0\cdot \delta\cdot
\Lambda^{+}_\mathcal{C} G^{\mathbb{C}}_{\sigma},$
where $\Xi $ denotes {a (discrete) }set of representatives for the  double-coset decomposition;
\item The multiplication $\Lambda G_{\sigma}^0 \times \Lambda_\mathcal{C}^{+} G^{\mathbb{C}}_{\sigma}\rightarrow
(\Lambda G^{\mathbb{C}}_{\sigma})^0$ is a real analytic map onto the connected open subset
$ \Lambda G_{\sigma}^0 \cdot \Lambda_\mathcal{C}^{+} G^{\mathbb{C}}_{\sigma}   = {\mathcal{I}_e} \subset \Lambda G^{\mathbb{C}}_{\sigma}$.

 {Here $\mathcal{I}_e$ denotes the (connected) open Iwasawa cell containing the identity element.}
\end{enumerate}
\item  {\em (Birkhoff decomposition)}

\begin{enumerate}
\item
 {$(\Lambda {G}^\C )^0= \bigcup _{\omega \in \mathcal{W}} \Lambda^{-}_{\mathcal{C}} {G}^{\mathbb{C}}_{\sigma} \cdot \omega \cdot \Lambda^{+}_{\mathcal{C}} {G}^{\mathbb{C}}_{\sigma}$
where  $\mathcal{W}$ denotes a (discrete) set of representatives for the double coset
decomposition}

\item The multiplication $\Lambda_{*}^{-} {G}^{\mathbb{C}}_{\sigma}\times
\Lambda^{+}_\FC {G}^{\mathbb{C}}_{\sigma}\rightarrow
\Lambda {G}^{\mathbb{C}}_{\sigma}$ is an analytic  diffeomorphism onto the
open and dense subset $\Lambda_{*}^{-} {G}^{\mathbb{C}}_{\sigma}\cdot
\Lambda^{+}_\FC {G}^{\mathbb{C}}_{\sigma}$ {\em ( big Birkhoff cell )}.
\end{enumerate}
\end{enumerate}
\end{theorem}

\begin{remark} \ \label{S2}
\begin{enumerate}
\item  {We would  like to recall that for inner symmetric spaces  the twisted loop algebras
are isomorphic to the untwisted ones.
For the algebraic case see, e.g. \cite{Kac}, chapter 8, and our case follows by completion in the topology used in  this paper.}
\item  {The middle terms of the Birkhoff decomposition, item $(2)(a)$ above, form, in the untwisted case, the Weyl group
of the corresponding untwisted loop algebra (see e.g. \cite{PS}) and therefore form a discrete subset of the loop group.}
\item  {The middle terms in item $(1)(a)$ above can be determined quite precisely by using \cite{Ke1}.
Roughly speaking, they correspond to factors in a natural product decomposition of the Weyl group elements of the Birkhoff decomposition and thus they form a discrete subset of the loop group as well. For the present paper we will not need any special information about these factors.}
\item  {It is well known that in the Birkhoff decomposition only one of the double cosets is an open subset of the loop group under consideration. Therefore the name "open cell" seems to be appropriate.
In the case of the Iwasawa decomposition, in general, several open double cosets can occur
(see, e.g., \cite{Ke1} for an explicit example and also \cite{D:open cells}).
But the open cell $\mathcal{I}_e$ containing the identity element plays naturally a special role.
Therefore it gets a name.}
\end{enumerate}
\end{remark}

Loops which have a finite Fourier expansion are called {\it algebraic loops} and
 denoted by the subscript $``alg"$, like
$\Lambda_{alg} G_{\sigma},\ \Lambda_{alg} G^{\mathbb{C}}_{\sigma},\
\Omega_{alg} G_{\sigma} $ as in  \cite{BuGu}, \cite{Gu2002}. And we define
  \begin{equation}\label{eq-alg-loop}\Omega^k_{alg} G_{\sigma}:
  =\left\{\gamma\in
\Omega_{alg} G_{\sigma}|
Ad(\gamma)=\sum_{|j|\leq k}\lambda^jT_j \right\}\subset \Omega_{alg} G_{\sigma} .\end{equation}


\subsection{ The DPW method and its potentials}

 With the loop group decompositions as stated above, we obtain a
construction scheme of harmonic maps from a surface into $G/K$.

\begin{theorem}\label{thm-DPW}\cite{DPW}, \cite{DoWa12}, \cite{Wu}.
Let $\D$ be a contractible open subset of $\C$ and $z_0 \in \D$ a base point.
Let $\mathcal{F}: \D \rightarrow G/K$ be a harmonic map with $\mathcal{F}(z_0)=eK.$
The associated family $\mathcal{F}_{\lambda}$ (See Definition \ref{def-1}) of $\mathcal F$  can be lifted to a map
$F:\D \rightarrow \Lambda G_{\sigma}$, the extended frame of $\mathcal{F}$, and we can assume  without loss of generality that $F(z_0,\bar z_0, \lambda)= e$ holds.
Under this assumption,
\begin{enumerate}
\item
 The map $F$ takes only values in
 {$\mathcal{I}_e\subset \Lambda G^{\mathbb{C}}_{\sigma}$, i.e. in the open Iwasawa cell containing the identity element.}

 \item There exists a discrete subset $\D_0\subset \D$ such that on $\D\setminus \D_0$
we have the decomposition
\[F(z,\bar{z},\lambda)=F_-(z,\lambda)  F_+(z,\bar{z},\lambda),\]
where \[F_-(z,\lambda)\in\Lambda_{*}^{-} G^{\mathbb{C}}_{\sigma}
\hspace{2mm} \mbox{and} \hspace{2mm} F_+(z,\bar{z},\lambda)\in \Lambda^{+}_{\FC} G^{\mathbb{C}}_{\sigma}.\]
and $F_-(z,\lambda)$ is meromorphic in $z \in \D$  and satisfies
$F_-(z_0,\lambda) = e$.

Moreover,
\[\eta= F_-(z,\lambda)^{-1} \mathrm{d} F_-(z,\lambda)\]
 {is a $\lambda^{-1}\cdot\mathfrak{p}^{\mathbb{C}} \textendash \hbox{valued}$ meromorphic $(1,0) \textendash$form} with poles at points of $\D_0$ only.

\item Spelling out the converse procedure in detail we obtain:
Let $\eta$ be a  { $\lambda^{-1}\cdot\mathfrak{p}^{\mathbb{C}} \textendash \hbox{valued}$ meromorphic $(1,0) \textendash$form}  such that the solution
to the ODE
\begin{equation}
F_-(z,\lambda)^{-1} \mathrm{d} F_-(z,\lambda)=\eta, \hspace{5mm} F_-(z_0,\lambda)=e,
\end{equation}
is meromorphic on $\D$, with  $\D_0$ as set of possible poles.
 Then on  $\D_{\mathcal{I}}
= \{  z \in \D\setminus {\D_0}\ |\  {F_-(z,\lambda) \in \mathcal{I}_e}\}$
we
define  $\tilde{F}(z,\lambda)$ by the Iwasawa decomposition
\begin {equation}\label{Iwa}
F_-(z,\lambda)=\tilde{F}(z,\bar{z},\lambda)  \tilde{F}_+(z,\bar{z},\lambda)^{-1}.
\end{equation}
 This way one obtains  an extended frame
\[\tilde{F}(z,\bar{z},\lambda)=F_-(z,\lambda)  \tilde{F}_+(z,\bar{z},\lambda)\]
of some harmonic map from $  \D_{\mathcal{I}}  $ to $G/K$  satisfying
$\tilde{F}(z_0,\bar{z}_0,\lambda)= e$.

\item Any harmonic map  $\mathcal{F}: \D\rightarrow G/K$ can be derived from a
 {$\lambda^{-1}\cdot\mathfrak{p}^{\mathbb{C}} \textendash \hbox{valued}$ meromorphic
$(1,0) \textendash$form} $\eta$ on $\D$.
Moreover, the two constructions outlined above  are inverse to each other (on appropriate domains of definition).
\end{enumerate}
\end{theorem}

\begin{remark}\ \label{sphere}
\begin{enumerate}
\item   {A typical application of the theorem above arises as follows: one considers a harmonic map $\mathcal{F}: M \rightarrow G/K$, where $M$ is any Riemann surface and $G/K$ any inner semi-simple symmetric space and considers the natural lift $\tilde{\mathcal{F}}: \tilde{M} \rightarrow G/K$. Then, if $M$ is non-compact or compact of positive genus, then $\tilde{M}$ is contractible and one can apply the theorem above to  $\tilde{\mathcal{F}}$.}

\item  {So the question is: what happens if $M = S^2$? This case has been discussed in detail in Section 3.2 of \cite{DoWa12}. Basically, the theorem above still holds, if one admits  some
singular points. More precisely, if $\mathcal{F}: S^2 \rightarrow G/K$ is harmonic, then  {(see e.g. loc.cit. Theorem 3.11)} after removing at most two  {(different, but otherwise arbitrary)} points $\{p_1,p_2\}$ from $S^2$ one can find an extended frame
$F^\prime : S^2 \setminus{ \{p_1,p_2\}} \rightarrow  \Lambda G_\sigma$ for
$\mathcal{F}^\prime  = \mathcal{F}| S^2 \setminus{ \{p_1,p_2\}}  : S^2 \setminus{ \{p_1,p_2\}}  \rightarrow G/K$. Moreover, $F^\prime_-$, formed by Birkhoff decomposing $F^\prime$, extends  meromorphically  to $S^2$ and the normalized potential formed with $F^\prime_-$ extends meromorphically to $S^2.$
The converse{, the construction of a harmonic map defined on $S^2$ from a normalized potential,} can be carried out  {as usual} if one admits at most two singularities  {in the extended frame associated with the original normalized potential. For more details we refer to loc.cit.}
Of course, if one wants to obtain a harmonic map defined  {and smooth} on all of $S^2$, then additional conditions at the  {poles of the original normalized potential} need to be imposed.
We will use this result at several places below. }
\item
   {The restriction above to factorizations   on  $\D_{\mathcal{I}}$ implies that on this set we have globally an Iwasawa decomposition of the form $(2.6)$ with $F_{\pm}$  globally smooth. This implies, of course, the smoothness of the associated harmonic map. At isolated points of $\D_{\mathcal{I}}$ in $\D$ the frames generally exhibit singular behaviour.
 In some cases, however, the corresponding harmonic maps are nevertheless non-singular.
 When considering, e.g.,  Willmore surfaces, singularities in the frame may or may not induce singularities in the associated harmonic map \cite{DoWa12}, \cite{Wang-3}.}
 {For example, singularities can occur in the extended frame, while both the associated harmonic conformal Gauss map as well as the corresponding Willmore surface stays smooth at the singularities \cite{Wang-3}. See also Example \ref{example} and \cite{Wang-3} for detailed discussions. Therefore, when discussing concrete examples of global immersions one needs to determine separately for all singularities of the frame, whether the final surface has a singularity, i.e. a branch point, or whether it is smooth and an immersion there. We refer to \cite{Wang-3} for examples of Willmore surfaces with branch points.}
\end{enumerate}

\end{remark}

\begin{definition}\cite{DPW},\ \cite{Wu}.
 {With the conventions as above, let  $\mathcal{F}: M \rightarrow G/K$ be a
harmonic map with basepoint $p_0$, $\tilde{\pi}: \tilde{M} \rightarrow M$ the universal cover of $M$
and  $\tilde{\mathcal{F}}: \tilde{M} \rightarrow G/K$
the natural lift of $\mathcal{F}$ with basepoint $z_0,$ where $\tilde{\pi}(z_0) = p_0$.
Then the
$\lambda^{-1}\cdot \mathfrak{p}^{\mathbb{C}} \textendash \hbox{valued}$
 meromorphic $(1,0)\textendash$form  $\eta$  defined in $(2)$ of the last theorem for
 $\tilde{\mathcal{F}}$ is called the {\em normalized potential} for the harmonic
map $\mathcal{F}$ with the point $z_0$ as the reference point. And $F_-(z,\lambda)$ given above is called the  {corresponding} meromorphic extended frame.}
\end{definition}

The normalized potential is uniquely determined,  {since the extended frames are normalized to $e$ at some fixed base point on $\tilde{M}$.}
The normalized potential is  meromorphic  {in $z \in \tilde{M}$.}

In many applications it is much more convenient to use potentials which have a Fourier expansion containing more than one power of $\lambda$.
And when permitting many (maybe infinitely many) powers of $\lambda$,  one can
 {obtain holomorphic coefficients:}

\begin{theorem}\cite{DPW}, \cite{DoWa12}.\label{thm-CC}
Let $\D$ be a contractible open subset of $\C$.
Let $F(z,\bar{z},\lambda)$ be the frame of some harmonic map
into $G/K$. Then there exists  {some real-analytic $V_+: \D  \rightarrow  \Lambda^{+} G^{\mathbb{C}}_{\sigma} $ such that $C(z,\lambda) =
F(z, \bar z, \lambda)  V_+ (z, \bar z,\lambda) $} is holomorphic in $z\in\mathbb{D}$ and in $\lambda \in \mathbb{C}^*$.
Then the Maurer-Cartan form $\eta = C^{-1} \mathrm{d} C$ of $C$ is a holomorphic
$(1,0) \textendash$form on $\D$ and it is easy to verify that $\lambda \eta$ is holomorphic for $\lambda \in \C$.

Conversely, Let $\eta\in\Lambda\mathfrak{g}^{\C}_{\sigma}$ be a holomorphic
$(1,0)\textendash$form such that $\lambda \eta$ is holomorphic  in $\lambda$ for $\lambda \in \C$, then by the same process  {as} given in Theorem \ref{thm-DPW} we obtain a harmonic map $\mathcal{F}: \D \rightarrow G/K$.
\end{theorem}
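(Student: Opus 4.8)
The plan is to treat the two directions separately: the forward direction rests on solving a $\bar\partial$-problem inside the positive loop group, while the converse rests on the Iwasawa splitting of Theorem \ref{thm-decomposition} together with the reality condition characterizing $\Lambda G_\sigma$. For the forward direction I would start from the extended frame $F$, whose Maurer--Cartan form is $F^{-1}dF=\alpha_\lambda=\lambda^{-1}\alpha_{\mathfrak p}'+\alpha_{\mathfrak k}+\lambda\alpha_{\mathfrak p}''$. Since $\alpha_\lambda$ is a Laurent polynomial in $\lambda$, the frame $F(z,\lambda)$ is automatically holomorphic in $\lambda\in\C^*$. Writing $d=d'+d''$ for the splitting of $d$ into its $(1,0)$ and $(0,1)$ parts, the aim is to make $C=FV_+$ holomorphic also in $z$ by choosing $V_+\in\Lambda^{+} G^{\mathbb C}_\sigma$ killing the $(0,1)$-part of the gauged connection, i.e. solving $d''V_+=-\alpha_\lambda''\,V_+$, where $\alpha_\lambda''=\alpha_{\mathfrak k}''+\lambda\alpha_{\mathfrak p}''$. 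Expanding $V_+=\sum_{k\ge 0}\lambda^k V_k$ and matching powers of $\lambda$ turns this into a triangular system of linear $\bar\partial$-equations for the $V_k$, each solvable on the contractible domain $\D$ by the $\bar\partial$-Poincaré lemma; because $\alpha_\lambda''$ carries only non-negative powers of $\lambda$, the solution stays in $\Lambda^{+} G^{\mathbb C}_\sigma$, and $C=FV_+$ is then holomorphic in $z$ and in $\lambda\in\C^*$.

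It then remains to read off the shape of $\eta=C^{-1}dC$. Since the $(0,1)$-part has been gauged away, $\eta=V_+^{-1}\alpha_\lambda'\,V_+ + V_+^{-1}d'V_+$ is a $(1,0)$-form, hence holomorphic in $z$; and since $\alpha_\lambda'=\lambda^{-1}\alpha_{\mathfrak p}'+\alpha_{\mathfrak k}'$ has lowest power $\lambda^{-1}$ while $V_+$ is positive, the lowest power of $\lambda$ in $\eta$ is $\lambda^{-1}$, so $\lambda\eta$ is holomorphic for $\lambda\in\C$. This settles the first assertion.

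For the converse, given $\eta$ with $\lambda\eta$ holomorphic on $\C$, I would first integrate $C^{-1}dC=\eta$, $C(z_0,\lambda)=e$, obtaining $C$ holomorphic in $z\in\D$ and in $\lambda\in\C^*$. Applying the Iwasawa decomposition of Theorem \ref{thm-decomposition}(i) on its open cell I would write $C=F\,V_+$ with $F\in\Lambda G_\sigma$ and $V_+\in\Lambda^{+} G^{\mathbb C}_\sigma$, and then verify that $F$ is an extended frame. From $F^{-1}dF=V_+\eta V_+^{-1}-(dV_+)V_+^{-1}$ the second summand is positive in $\lambda$, while the first contributes the unique negative power $\lambda^{-1}$ (as $\eta$ has at most a simple pole at $\lambda=0$ and $V_+$ is positive); thus $F^{-1}dF$ has a pole of order at most one at $\lambda=0$. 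Now the reality condition defining $\Lambda G_\sigma$ forces the Fourier coefficients $A_m$ of $F^{-1}dF=\sum_m\lambda^m A_m$ to satisfy $A_m=\tau(A_{-m})$, so the absence of powers below $\lambda^{-1}$ forces the absence of powers above $\lambda^{+1}$; together with the twisting relation $\sigma A_m=(-1)^m A_m$ this gives $F^{-1}dF=\lambda^{-1}A_{-1}+A_0+\lambda A_1$ with $A_{\pm1}\in\mathfrak p^{\mathbb C}$ and $A_0\in\mathfrak k$. Finally, since the $\lambda^{-1}$-coefficient arises solely from the $(1,0)$-form $V_+\eta V_+^{-1}$, the term $A_{-1}=\alpha_{\mathfrak p}'$ is of type $(1,0)$ and $A_1=\tau(A_{-1})=\alpha_{\mathfrak p}''$ of type $(0,1)$; hence $F^{-1}dF=\alpha_\lambda$ has exactly the form of an extended frame, and $\mathcal F=\pi\circ F(\cdot,1)$ is harmonic.

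The main obstacle I anticipate is the forward existence of the gauge $V_+$: one must solve $d''V_+=-\alpha_\lambda''V_+$ while simultaneously keeping $V_+$ inside the twisted positive loop group $\Lambda^{+} G^{\mathbb C}_\sigma$ and retaining holomorphicity in $\lambda$ on all of $\C^*$, so that $C=FV_+$ genuinely extends. The triangular recursion in powers of $\lambda$ and the non-negativity of the powers in $\alpha_\lambda''$ are precisely what make these constraints compatible, but controlling convergence of the resulting loop, and in the converse restricting the harmonic map to the open set where the Iwasawa splitting is valid, are the points demanding care.
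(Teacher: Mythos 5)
Your proposal is correct and follows essentially the standard argument of \cite{DPW}, which is exactly what the paper cites here (the paper itself offers no proof of this theorem): gauge away $\alpha''_\lambda$ by solving $\bar\partial V_+=-\alpha''_\lambda V_+$ in $\Lambda^+G^{\mathbb C}_\sigma$ for the forward direction, and Iwasawa-split the integrated holomorphic frame and use twisting plus reality to pin down $F^{-1}dF=\lambda^{-1}\alpha'_{\mathfrak p}+\alpha_{\mathfrak k}+\lambda\alpha''_{\mathfrak p}$ for the converse. The one imprecision is in the base step of your recursion: $\bar\partial V_0=-\alpha''_{\mathfrak k}V_0$ is a nonabelian $\bar\partial$-problem in $K^{\mathbb C}$ (solvable on a contractible domain by Koszul--Malgrange/Grauert, after which the higher steps do reduce to the $\bar\partial$-Poincar\'e lemma via $V_k=V_0W_k$), not itself an instance of the $\bar\partial$-Poincar\'e lemma; \cite{DPW} sidesteps both this and the convergence issue you flag by solving the $\bar\partial$-problem directly in the Banach Lie group $\Lambda^+G^{\mathbb C}_\sigma$ rather than power by power.
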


 {\begin{definition}\label{rm-C}
 The matrix function $C(z,\lambda)$ associated with the holomorphic $(1,0) \textendash$form $\eta$ as in Theorem \ref{thm-CC} will be called a {\em holomorphic extended frame} for the harmonic map $\mathcal{F}$.
 \end{definition}}

\subsection{Symmetries and monodromy}

 {It is natural to investigate harmonic maps with symmetries.}
Since harmonic maps frequently occur as ``Gauss maps" of some surfaces,
 {the
investigation  of harmonic maps with symmetries also has  implications for surface theory.
}

\begin{definition}
 {Let  $\mathcal{F}: M \rightarrow G/K$ be a harmonic map. Then a pair, $(\gamma,R)$, is called a symmetry of $\mathcal{F}$, if
$\gamma$ is an automorphism of $M$ and R is an automorphism of $G/K$ satisfying
 $$\mathcal{F}(\gamma.p) = R.\mathcal{F}(p)$$
  for all $p \in M$.}
  \end{definition}

   {We would like to point out that an intuitive notion of "symmetry" for $\mathcal{F}$ would be an automorphism $R$ of $G/K$ such that  $ R \mathcal{F}(M) = \mathcal{F}(M)$. In some cases one can prove that this intuitive definition implies the actual definition given just above.}

  \begin{lemma} \label{frametransform}
 {  Let  $\mathcal{F}: M \rightarrow G/K$ be a harmonic map {and}  $(\gamma,R)$ a symmetry
  of $\mathcal{F}.$
Let $\tilde{M}$ denote the universal cover of $M$
and $\tilde{\mathcal{F}}: \tilde{M} \rightarrow G/K, \tilde{\mathcal{F}} = \mathcal{F} \circ \pi$, its natural lift. Then:
 \begin{enumerate}
 \item  $\tilde{\mathcal{F}}$ satisfies
\[\tilde{\mathcal{F}} (\gamma.z) = R.\tilde{\mathcal{F}}(z).\]
\item
 For any frame $F : \tilde{M} \rightarrow G$  of $\mathcal{F}$ one  obtains
\begin{equation} \label{symmetry-nolambda}
 {\gamma^*F(z,\bar z) = RF(z,\bar z)k(z,\bar z),}
\end{equation}
where $k(z,\bar z)$ is a function from $\tilde{M}$ into $K$.
\item  For the extended frame  {$F(z,\bar z,\lambda) : \tilde{M} \rightarrow \Lambda G_\sigma$
of $\mathcal{F}$} there exists some map
 $\rho_\gamma: \C^* \rightarrow \Lambda G_\sigma$ such that
 \begin{equation} \label{symmetry}
\gamma^*F(z,\bar z,\lambda)  = \rho_\gamma (\lambda) F(z,\bar z,\lambda)  {k(z, \bar z)},
\end{equation}
where $k$ is the $\lambda \textendash$independent function from $\tilde{M}$ into $K$ occurring in the previous equation.
Moreover, $\rho_{\gamma}(\lambda)|_{ \lambda= 1} = R$ holds.
  \end{enumerate}}
\end{lemma}

 {Note, since $\mathcal{F}$ is full, for each symmetry $(\gamma,R)$ the automorphism $R$ of $G/K$
is uniquely determined by $\gamma$. We therefore write $\rho_\gamma (\lambda) = \rho(\gamma,\lambda)$ and ignore $R$ in this notation. Also note that $\rho_\gamma$ actually is defined and holomorphic for all $\lambda \in \C^*$.}

\begin{proof}
 {The first two equations follow immediately from the definitions. In view of (\ref{alphalambda})
the equality of Maurer-Cartan forms of (\ref{symmetry-nolambda}) implies the equality of the Maurer-Cartan forms of (\ref{symmetry}). Therefore only the last statement needs to be proven. But evaluating  (\ref{symmetry}) at the base point $z_0$ of $\tilde{\mathcal{F}}$ yields
 {$F(\gamma.z_0,\overline{\gamma.z_0},\lambda)  = \rho_\gamma (\lambda) k (z_0,\bar z_0).$} Hence we obtain
$\rho_\gamma: \C^* \rightarrow \Lambda G_\sigma,$ and putting $\lambda = 1$  we infer
 {$F(\gamma.z_0,\overline{\gamma.z_0}) = \rho_\gamma k(z_0,\bar z_0).$} On the other hand, from (\ref{symmetry-nolambda})
we obtain  {$F(\gamma.z_0,\overline{\gamma.z_0}) = R k(z_0,\bar z_0)$}, whence $R =\rho_{\gamma}(\lambda)|_{ \lambda= 1} $.}
\end{proof}

\begin{definition}
 {With the notation above,  the matrix $\rho_\gamma (\lambda), \lambda \in S^1,$ ( for all $\lambda\in\C^* $ in fact) occurring in (\ref{symmetry})  is called the
monodromy (loop)  matrix of $\gamma$  for $\mathcal{F}.$}
\end{definition}

The following result has been proven in Theorem 4.8 of  {\cite{Do-Wa-sym}.}

\begin{theorem}
Let $M$ be a Riemann surface which is either non-compact or compact of positive genus.
\begin{enumerate}
\item
 Let $\mathcal{F}:M \rightarrow G/K$ be a harmonic map and
  {$\tilde{\mathcal{F}}: \tilde{M} \rightarrow G/K$  its natural lift to the universal cover $\tilde{M}$ of $M$.} Then there exists a normalized potential and a holomorphic potential for $\mathcal{F}$, namely the corresponding  {potential} for $\tilde{\mathcal{F}}$.

\item Conversely, starting from some potential producing a harmonic map
$\tilde{\mathcal{F}}$ from $\tilde{M}$ to $G/K$, one obtains a harmonic map $\mathcal{F}$ on $M$ if and only if

\begin{enumerate}
\item  The monodromy matrices $\chi(g, \lambda)$ associated with
$g \in \pi_1 (M)$, considered as automorphisms of $\tilde{M}$, are elements of $(\Lambda G_{\sigma})^0$.

\item  There exists some $\lambda_0 \in S^1$ such that  {$\chi(g, \lambda)|_ { \lambda= \lambda_0} =e$, i.e.
\begin{equation*}
\begin{split} F(g.z,\overline{g.z},\lambda)|_{ \lambda= \lambda_0}&\equiv
\chi(g, \lambda)|_{ \lambda= \lambda_0} F(z, \bar{z}, \lambda)|_{ \lambda= \lambda_0}\ \mod\ K\\
&\equiv   F(z, \bar{z}, \lambda)|_{ \lambda= \lambda_0}\ \mod\ K
\end{split}
\end{equation*}}
for all $g \in \pi_1 (M)$.

\end{enumerate}

\end{enumerate}
\end{theorem}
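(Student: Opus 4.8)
The plan is to prove the theorem as a descent-to-quotient statement: a harmonic map $\tilde{\mathcal{F}}:\tilde{M}\to G/K$ descends to $M=\tilde{M}/\pi_1(M)$ if and only if it is equivariant under the deck group with trivial action on the target (up to gauge), and this equivariance is controlled precisely by the extended frame and the monodromy. The key observation is that the associated family $\alpha_\lambda$ is built entirely from the pullback metric and the Maurer–Cartan data, which are intrinsic. For a deck transformation $g\in\pi_1(M)$, acting on $\tilde{M}$ by a biholomorphism, $\tilde{\mathcal{F}}$ descends iff $\tilde{\mathcal{F}}(g.z)=\tilde{\mathcal{F}}(z)\bmod K$; I would recast this in terms of frames and then in terms of the loop-parametrized frame $F(z,\lambda)$.

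\textbf{Part (1).} This direction is essentially definitional given the earlier machinery. Starting from $\mathcal{F}:M\to G/K$ harmonic, I lift it to $\tilde{\mathcal{F}}:\tilde M\to G/K$, which is again harmonic since harmonicity is a local (hence lift-invariant) condition and $\tilde M$ carries the pulled-back conformal structure. Because $\tilde M$ is simply connected and one may assume it is realized as a contractible domain or exhausted by such, Theorem \ref{thm-DPW} and the subsequent holomorphic-potential theorem apply directly on $\tilde M$, yielding a normalized potential and a holomorphic potential for $\tilde{\mathcal{F}}$; by construction these serve as the potentials ``for $\mathcal{F}$'' in the stated sense. The only point to check is that the normalization base point can be fixed consistently on $\tilde M$, which is immediate.

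\textbf{Part (2).} This is the substantive direction. Given a potential on $\tilde M$, the DPW recipe produces an extended frame $F(z,\bar z,\lambda)$ and hence $\tilde{\mathcal{F}}_\lambda$. For each $g\in\pi_1(M)$, the pulled-back frame $g^*F$ and $F$ solve the same $\lambda$-ODE up to a $z$-independent left factor, so there exists a loop $\chi(g,\lambda)\in\Lambda G^{\mathbb{C}}_\sigma$ with $F(g.z,\lambda)=\chi(g,\lambda)F(z,\lambda)$; the cocycle property in $g$ follows from uniqueness of solutions. I would then argue each equivalence separately. For the ``if'' direction: condition (2a) forces $\chi(g,\lambda)\in(\Lambda G_\sigma)^0$, so $\chi(g,\lambda)$ acts as a genuine (real, target-preserving) isometric symmetry of the associated family rather than a mere complex gauge; condition (2b), that at one spectral value $\lambda_0\in S^1$ the monodromy acts trivially modulo $K$, then pins down that $\tilde{\mathcal{F}}_{\lambda_0}(g.z)=\tilde{\mathcal{F}}_{\lambda_0}(z)\bmod K$ for all $g$, i.e. the harmonic map at $\lambda_0$ is $\pi_1(M)$-invariant and hence descends to $M$. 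For the ``only if'' direction, descent of $\mathcal{F}$ gives the invariance in the target, which forces (2a) and (2b) by reading the equation $F(g.z,\lambda)=\chi(g,\lambda)F(z,\lambda)$ backwards.

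\textbf{The hard part} will be establishing that the monodromy $\chi(g,\lambda)$ is genuinely $\lambda$-independent in $g$ and lands in the right loop group, and in particular that triviality at a \emph{single} $\lambda_0\in S^1$ propagates correctly. The delicate mechanism here is the standard real-analytic-in-$\lambda$ argument: $\chi(g,\cdot)$ is a loop satisfying a reality constraint inherited from $F\in\Lambda G_\sigma$, so that condition (2a), once it holds, together with invariance at $\lambda_0$ yields invariance for all $\lambda\in S^1$ by analytic continuation and the twisting condition $\sigma\chi(g,\lambda)=\chi(g,-\lambda)$. I would handle this exactly as in \cite{DPW}, \cite{Do-Ha}, \cite{Do-Ha2} and \cite{Do-Wa-sym}: extract the monodromy by comparing frames, verify the cocycle and reality conditions, and invoke the uniqueness in the Iwasawa/Birkhoff factorizations of Theorem \ref{thm-decomposition} to transfer the single-$\lambda_0$ statement to a statement valid on all of $S^1$, hence to genuine descent of the harmonic map. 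The requirement that $M$ be non-compact or of positive genus enters precisely to guarantee a global potential on $\tilde M$ (avoiding the essential $S^2$ complications noted in the remark after Theorem \ref{thm-DPW}).
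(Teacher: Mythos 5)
The paper offers no proof of this theorem; it is stated with the remark that it ``can be proven as in'' \cite{DPW}, \cite{Do-Ha}, \cite{Do-Ha2}, \cite{Do-Wa-sym}, so there is no internal argument to compare against. Your overall architecture is the standard one those references use: lift to $\tilde{M}$ and apply Theorem \ref{thm-DPW} for part (1); for part (2), compare $g^*F$ with $F$ to extract a $z$-independent monodromy loop, read (2a) as the reality/twisting condition on $\chi(g,\cdot)$ needed for it to act as a genuine symmetry rather than a complex gauge, and read (2b) as the closing condition at $\lambda_0$. Two smaller imprecisions: the correct transformation law is $F(g.z,\lambda)=\chi(g,\lambda)F(z,\lambda)k(g,z)$ with a right factor $k(g,z)\in K$ produced by the Iwasawa splitting (this is what the ``$\bmod\ K$'' in (2b) absorbs), and the monodromy matrices are only well defined once the potential is assumed invariant under $\pi_1(M)$ (or once one already knows each $g$ is a symmetry of $\tilde{\mathcal{F}}$); both hypotheses should be made explicit.

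The genuine problem is the step you single out as ``the hard part'': the claim that (2a) together with triviality of the monodromy action at a \emph{single} $\lambda_0\in S^1$ ``yields invariance for all $\lambda\in S^1$ by analytic continuation.'' This is false. The condition $\chi(g,\lambda_0)F(z,\lambda_0)\in F(z,\lambda_0)K$ is a pointwise condition at one value of the loop parameter, not an identity of real-analytic functions holding on a set with an accumulation point, so there is nothing to continue. If such propagation held, every harmonic map on a non-simply-connected $M$ would automatically satisfy condition (U1) of Definition \ref{def-uni} (trivial monodromy for \emph{all} $\lambda$), which the paper explicitly describes as ``a very strong condition''; the standard examples behind this circle of ideas (e.g.\ the harmonic Gauss maps of CMC cylinders and tori treated in \cite{Do-Ha}, \cite{Do-Ha2}) have monodromy $\chi(g,\cdot)$ that is a nonconstant loop in $(\Lambda G_{\sigma})^0$ and acts trivially modulo $K$ only at isolated spectral values. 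Fortunately the theorem does not require this propagation: the harmonic map $\mathcal{F}$ one ``obtains on $M$'' is the single member $\mathcal{F}_{\lambda_0}$ of the associated family, and (2b) is verbatim the statement that this one map is $\pi_1(M)$-invariant and hence descends. Delete the analytic-continuation step; with it removed, the rest of your argument is essentially the correct and standard one.
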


 {We also need the existence of normalized potentials for harmonic maps from a $2-$sphere
to an inner symmetric space, compact or non-compact.  An important difference to the previously discussed cases is that the extended frames will not be smooth globally on $S^2$,  but will be smooth (actually real analytic) on
$S^2 \setminus {\hbox{\{two points\}}}.$  See Remark \ref{sphere}.
The details can be found in $(2)$ of Remark \ref{sphere}
or Theorem 3.11 of \cite{DoWa12} which  we recall for the convenience of the reader:}

\begin{theorem}\label{normalized-potential-sphere} (Theorem 3.11 of  \cite{DoWa12} )
 {Every  harmonic map from $S^2$ to any  Riemannian or pseudo-Riemannian
  symmetric space  $G/K$ admits an extended frame with at most two singularities and it admits a global meromorphic extended frame. In particular, every  {spacelike conformal }  harmonic map from $S^2$ to any  Riemannian or pseudo-Riemannian  symmetric space  $G/K$ can be obtained from some meromorphic normalized potential.}
\end{theorem}

 {While we generally restrict in this paper to inner symmetric spaces, the last and the following theorem are stated more generally because of their importance.  As stated before, the case of outer symmetric spaces will not be considered in any detail in this paper.}

 { If $M$ has a non-trivial fundamental group, then the invariant potentials are  of particular interest. The first part of the theorem below (the case of a non-compact $M$) has been proven first  in \cite{Do-Ha2} for the case of  harmonic maps into $S^2$.   The case of harmonic maps from a compact Riemann surface  $M$
 into the (inner) symmetric space $G/K=SO^+(1,n+3)/SO^+(1,3)\times SO(n)$ was treated in
 Section 6 of  \cite{Do-Wa-sym}. }

  {For the case of compact $M$  no proof for a general inner symmetric space  is known.
 For the case of non-compact $M$  the published proofs are not very clear nor explicit.
 We therefore include a new proof using work of Bungart \cite{Bungart} and R\"ohrl \cite{Roehrl}.}

\begin{theorem} \label{thm-monodr}
Let $M$ be a Riemann surface.

\begin{enumerate}
\item If $M$ is non-compact, then every harmonic map from $M$ to any symmetric space  {$G/K$} can be generated from some holomorphic potential on $M$, i.e. it can be generated from some holomorphic potential on the universal cover $\tilde{M}$ of $M$ which is invariant under the fundamental group of $M$.

 \item If $M = S^2$, then every harmonic map from $S^2$ to any symmetric space can be generated from some meromorphic  potential on $S^2$.

\item If $M$ is compact, but different from $S^2$, then every harmonic map from $M$ to
 {the inner symmetric space  $G/K = SO^+(1,n+3)/SO(1,3)\times SO(n)$}  can be generated by some meromorphic  potential defined on $M$, i.e. from  a  meromorphic potential defined on the (contractible) universal cover $\tilde{M}$ of $M$ which is invariant under the fundamental group of $M$.
\end{enumerate}
\end{theorem}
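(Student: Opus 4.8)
The plan is to treat the three cases separately after the standard uniformization step. Part $(2)$ is in fact immediate: since $S^2$ is simply connected there is no fundamental group to contend with, so the meromorphic normalized potential furnished by Theorem \ref{th-potential-sphere} is already a meromorphic potential defined on $S^2$, and nothing further is needed. For parts $(1)$ and $(3)$ the universal cover $\tilde M$ is, by uniformization, biholomorphic to $\C$ or to the unit disc, hence contractible; thus Theorem \ref{thm-DPW} together with the holomorphic-potential theorem produces a holomorphic potential for the lift $\tilde{\mathcal{F}}$ on $\tilde M$ in case $(1)$, and a meromorphic one in case $(3)$. The entire content of the theorem is therefore the assertion that such a potential can be chosen \emph{invariant} under the deck group $\pi_1(M)$.

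To set this up I would lift $\mathcal{F}$ to $\tilde{\mathcal{F}}:\tilde M\to G/K$, take its extended frame $F(z,\lambda)$, and form the holomorphic (resp. meromorphic) frame $C=FV_+$ with $V_+\in\Lambda^+ G^{\C}_\sigma$, so that $\eta=C^{-1}dC$ is the desired potential. Because $\tilde{\mathcal{F}}$ descends to $M$, each deck transformation $\gamma_g$, $g\in\pi_1(M)$, acts on the extended frame by $\gamma_g^*F=\chi(g,\lambda)\,F\,k_g$ with a $z$-independent monodromy $\chi(g,\lambda)\in\Lambda G_\sigma$ and a $K$-valued gauge $k_g$. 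The key observation is that $\gamma_g^*\eta=\eta$ holds precisely when $C$ can be arranged to have $z$-independent monodromy, i.e. $\gamma_g^*C=M(g,\lambda)\,C$ for matrices $M(g,\lambda)$ not depending on $z$ (since then $dM(g,\lambda)=0$ and the Maurer--Cartan forms agree). So the problem reduces to correcting $C$ by a right gauge $W_+(z,\lambda)\in\Lambda^+ G^{\C}_\sigma$, holomorphic (resp. meromorphic) in $z$, that removes the $z$-dependence of the monodromy cocycle of $C$.

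In case $(1)$ this correction exists, and this is the crux of the argument: over an open Riemann surface $\tilde M$, which is Stein, holomorphic principal bundles with connected complex structure group are holomorphically trivial by the Oka--Grauert principle, and this triviality is exactly what allows one to solve for $W_+$ and thereby trivialize the $z$-dependence of the monodromy, as carried out in \cite{Do-Ha2} and \cite{DoWa1}. I expect this to be the main obstacle, since everything else is bookkeeping with the DPW correspondence; once invariance of $\eta$ under each $\gamma_g$ is secured, $\eta$ descends to the holomorphic potential on $M$ claimed in $(1)$.

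In case $(3)$ the same scheme cannot yield a holomorphic invariant potential: on a compact Riemann surface of genus $\geq 1$ the holomorphic bundles in question are in general non-trivial, so the Oka--Grauert argument fails and poles become unavoidable. Here I would exploit the explicit structure of the loop group of $G^{\C}$ for the specific target $SO^+(1,n+3)/SO^+(1,3)\times SO(n)$ together with Riemann--Roch, which guarantees meromorphic functions with prescribed divisors on $M$, to build a meromorphic gauge $W_+$ whose poles absorb the topological obstruction and render the monodromy of $\hat C=CW_+$ constant in $z$. The resulting $\hat\eta=\hat C^{-1}d\hat C$ is then a $\pi_1(M)$-invariant meromorphic potential descending to $M$, as in \cite{Do-Wa-sym}. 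Balancing the divisor data against the monodromy so that invariance and meromorphy hold simultaneously is the genuinely hard part, and is presumably the reason the statement is restricted to this particular symmetric space.
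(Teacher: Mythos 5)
First, a point of order: the paper does not prove this theorem. It is quoted from \cite{Do-Wa-sym} (with the non-compact case credited to \cite{Do-Ha2} and \cite{DoWa1}), so there is no in-text proof to compare against line by line; I can only judge your proposal against the strategy of those references. On that basis, your part (2) is exactly right: it is Theorem \ref{th-potential-sphere} plus the simple connectivity of $S^2$, and nothing more is needed. Your part (1) is also the correct reconstruction: one gauges the holomorphic frame $C=FV_+$ on the right so that the $\pi_1(M)$-cocycle $W_+(g,z,\lambda)=V_+^{-1}k_g\,(\gamma_g^*V_+)$ with values in holomorphic maps $\tilde M\to\Lambda^+G^{\C}_{\sigma}$ becomes a coboundary, and the solvability of this cocycle equation over an open (hence Stein) Riemann surface is the Grauert--Oka principle. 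The one technical point you omit, and which is essential, is that the structure group here is the infinite-dimensional group $\Lambda^+G^{\C}_{\sigma}$, so one needs the extension of Grauert's theorem to Banach Lie groups (Bungart), which is what \cite{Do-Ha2} actually invokes.

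Part (3) is where your proposal has a genuine gap, and you half-admit it. ``Riemann--Roch guarantees meromorphic functions with prescribed divisors, so build a meromorphic gauge whose poles absorb the obstruction'' is not an argument: Riemann--Roch produces meromorphic sections of finite-rank bundles, while the cocycle you must trivialize takes values in the infinite-dimensional loop group, and you give no mechanism for passing from one to the other; ``balancing the divisor data against the monodromy'' names the difficulty rather than resolving it. That step is precisely where \cite{Do-Wa-sym} does its work, and it is also where the restriction to the single target $SO^+(1,n+3)/SO^+(1,3)\times SO(n)$ enters -- a proof has to actually use the structure of that group (or else puncture $M$, apply part (1) to the resulting open surface, and control the invariant potential at the puncture; either way something concrete must be done). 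A smaller correction to your framing: the failure of the Stein argument on compact $M$ is not that ``the bundles in question are in general non-trivial'' -- they may well be trivial -- but that Grauert's equivalence of holomorphic and topological classification is unavailable, so holomorphic triviality can no longer be concluded and one must settle for a meromorphic trivialization.
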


\begin{proof}
  {For item $(2)$ see  $(2)$  of Remark \ref{sphere} and for item $(3)$ see Section 6 of \cite{Do-Wa-sym}. So let us assume now that $M$ is non-compact, $G/K$ any symmetric space
and $\mathcal{F} : M \rightarrow G/K$ a harmonic map. Let $F(z,\bar z,\lambda)$ denote an extended frame
for $\mathcal{F}.$
Then  from Lemma 4.11 in \cite{DPW} we obtain a  $($real analytic$)$ matrix function
 $\tilde V_+: \tilde{ M} \rightarrow \Lambda^+ G^\C_\sigma$ such that the matrix
 $C$ defined by
\begin{equation}
 C(z,\lambda) := F(z,\bar z,\lambda) \tilde{V}_+(z,\bar z, \lambda)
\end{equation}
is holomorphic in $z \in \tilde{M}$ and $\lambda \in \C^*$.
 Moreover, $F(z,\bar z,\lambda)$ satisfies  (\ref{symmetry}).
 As a consequence,  $C$ inherits
 from $F(z,\bar z,\lambda)$ the transformation behaviour
  \begin{equation}
  C(\tau.z, \lambda) = M(\tau, \lambda)  C(z,\lambda) W_+(\tau, z, \lambda),
 \end{equation}
where $\tau \in  \pi_1(M)$  and  $W_+: \tilde{ M} \rightarrow  \Lambda^+ {G^\C}_\sigma$
 is holomorphic in $z$ and $\lambda \in \C^*$.  It is straightforward to verify the ``cocycle condition"
 \begin{equation}
W_+(\tau \mu,z,\lambda) =
W_+(\tau, \mu.z,\lambda) W_+(\mu,z,\lambda)  \hbox{ for all  $\tau, \mu \in \pi (M ).$}
\end{equation}
Our goal is to split the cocycle $W_+(\tau,z,\lambda)$ in $\Lambda^+ G^\C_\sigma$. For this we begin by following  the first few lines of the proof of Theorem 3 of  \cite{Roehrl}.The paper refers to complex Lie groups,
  which, in our case we consider the complex Banach Lie group $H = \Lambda^+ G^\C_\sigma$.}
   If $H$ is a complex Banach Lie group, then we denote by
 ${(H_\omega})^\mathcal{C}$  the sheaf of  continuous sections
from open subsets of $M$ to $H$.
Similarly, by  ${(H_\omega})^\mathcal{H}$ we denote the sheaf of  holomorphic sections
from open subsets of $M$ to $H$.

  {First we prove:
\begin{itemize}
\item[(a)]  Let $M$ be a non-compact Riemann surface and $H^\C$ a complex Banach Lie group
then $H^1(M, {(H_\omega)}^\mathcal{C})$ = 0.
\end{itemize}
}

For the convenience of the reader we translate the first 10 or so
lines of this proof:
For $\xi\in H^1(M,H)$  we need to prove that in the principal bundle associated
with $\xi$ there exists a continuous section. Since the principal bundle can contain, for dimension reasons, at most two-dimensional obstructions, it suffices for the existence of a continuous section the verification that the two-dimensional obstruction vanishes. But this obstruction is an element of $H^2(M, \pi_1(H))$. Moreover, for a non-compact Riemann surface $M$ it is known that $H_2(M,\mathbb{Z})$ vanishes, whence by the universal coefficient theorem also $H^2(M, \pi_1(H))$ vanishes. This proves claim \textrm{(a)}.
 From this we derive the complex Banach group version of
 \cite[Theorem 3]{Roehrl}:
\begin{itemize}
\item[(b)]
 Let $M$ be a non-compact Riemann surface and $H$ a complex Banach Lie group
then $H^1(M, {(H_\omega)}^\mathcal{H}) = 0.$
\end{itemize}
 {But \cite[Theorem 8.1]{Bungart} implies}
\[
 H^1(X, {(G^\C_\omega)}^\mathcal{C}) \cong H^1(X, {(G^\C_\omega)}^\mathcal{H}),
\]
and claim $(b)$ follows.

 To finish the proof of the splitting  theorem  for the cocycle  $W_+(\tau, z, \lambda),$ we can now use
 \cite[Exercise 31.1]{Forster}.
 For a detailed proof one can follow the proof of
 \cite[Theorem 31.2]{Forster}, but with
 $\Psi_i(z)=W_+(\eta_i(z)^{-1},z,
 \lambda)^{-1}$ where we refer for notation to loc.cit.

 We now know $ W_+(\tau, z, \lambda) = P_+(z,\lambda)  P_+(\tau.z,\lambda)^{-1} $ and consider
$\hat{C} =  C P_+$.
A simple computation shows
$ \hat{C}(\tau.z,\lambda) = M(\tau, \lambda) C(z, \lambda)$
  for all $\tau \in \pi_1(M)$ and all
 $\lambda \in \C^*$.

   {As a consequence, the differential one-form $ \eta = \hat{C}^{-1} \dd\hat{C}$
 is invariant under $\pi_1(M)$.
 This finishes the proof of the theorem.}

  \end{proof}

 { The differential $1-$form $ \eta = \hat{C}^{-1} \dd\hat{C}$ as just above we will call
 an {\rm invariant} holomorphic potential for the   harmonic map $\mathcal{F}$.}


\section{Harmonic maps of finite uniton type}

This section aims at interpreting all harmonic maps of finite uniton type in terms of loop group language. For this purpose, we focus first on simply-connected Riemann surfaces. Then we apply the results to Riemann surfaces with {non-trivial} topology, where monodromy appears naturally. We also provide the relations between harmonic maps of finite uniton type into non-compact symmetric spaces and their dual harmonic maps into compact symmetric spaces, which is vital for later applications. We end this section by some discussions on dressing actions on harmonic maps of finite uniton type.

\subsection{Finite uniton type harmonic maps defined on simply-connected Riemann surfaces} \label{f.u.1-connM}
In this subsection we denote by $\tilde{M}$ a simply-connected Riemann surface, i.e., $S^2$,  $\E=\{z\in\C\ | \ |z|<1 \}$, or $\C$. Let $G/K$ denote an inner symmetric space and $\mathcal{F}:\tilde{M}\rightarrow G/K$ a harmonic map. We would like to recall our conventions:  {Let $\mathcal{F}:\tilde{M} \rightarrow G/K$ be a harmonic map defined on a simply-connected Riemann surface $\tilde{M}$. Then we assume  {at least tacitly} that a basepoint  $z_0\in \tilde {M}$  is chosen and we also assume that any extended frame $F$, the corresponding normalized  extended frame $F_-$  and  {any} holomorphic extended frame $C$  {associated with an extended frame $F$} all attain the  $e$ at $z_0$.}
 {The case of $M = S^2$ has been addressed in item  {$(2)$} of Remark \ref{sphere}, including the reference given there.}

\begin{definition} (Finite uniton  {type -- simply-connected}  Riemann surface ) Let $\tilde{M}$ be a simply-connected Riemann surface. A harmonic map $\mathcal{F}:\tilde{M}\rightarrow G/K$  {is said} to be of finite uniton type if some  extended frame $F$ of $\mathcal{F},$ satisfying $F(z_0,\bar z_0, \lambda)=e$ for some base point $z_0\in \tilde{M},$ is a Laurent polynomial in $\lambda$.
\end{definition}

 {Note that the condition
 {$F(z_0,\bar z_0,\lambda) = e$ is not a restriction. Assume some $\hat{F}$
satisfies all conditions  of the definition above, except $\hat{F}(z_0,\bar z_0,\lambda) = e$,
then $F(z,\bar z,\lambda) = \hat{F}(z_0,\bar z_0,\lambda)^{-1} \hat{F}(z,\bar z,\lambda)$} satisfies all conditions.
We would also like to point out that for any extended frame $F$ of some harmonic map
which is a Laurent polynomial in $\lambda$, both factors in the unique meromorphic Birkhoff decomposition  {$F(z,\bar z,\lambda) = F_-(z,\lambda) F_+(z,\bar z,\lambda)$}, i.e. assuming
$F_-(z,\lambda) = e + \mathcal{O}(\lambda^{-1})$, also are Laurent polynomial.}

\begin{proposition}\label{prop-fut}   Let $\mathcal{F}:\tilde{M} \rightarrow G/K$ be a harmonic map defined on a  {contractible} Riemann surface $\tilde{M}$.
 { Let $z_0\in \tilde {M}$ be a base point. Then the following statements are equivalent:}
\begin{enumerate}
\item   $\mathcal{F}$ is  of finite uniton type.
\item There exists an extended frame  {$F(z,\bar z,\lambda)$} of $\mathcal{F}$ which is a Laurent polynomial in $\lambda$.
\item  The normalized extended frame  {$\hat{F}_-(z,\lambda)$ of any extended frame
$\hat{F}(z,\bar z,\lambda)$ of $\mathcal{F}$} is a Laurent polynomial in $\lambda$.
\item  Every holomorphic  {extended frame $\hat{C}(z,\lambda)$}  associated with any extended frame
 {$\hat{F}(z,\bar z,\lambda)$} of $\mathcal{F}$ only  contains finitely many negative powers of $\lambda$.
\item There exists a holomorphic extended  { frame $C^\sharp(z,\lambda)$ } which  { only contains} finitely many negative powers of $\lambda$.
\end{enumerate}
    For the case $\tilde{M} = S^2,$  the above   {equivalences remain} true, if one replaces in the last two statements the word ``holomorphic" by ``meromorphic" and  {also admits for all
    frames  at most two singularities.}
\end{proposition}

\begin{proof}
 {For a contractible domain, the definition of "finite uniton type" for a harmonic map  can be
rephrased by $(1) \Leftrightarrow (2)$. Hence we only need} to show that  $(2), (3), (4)$ and $(5)$ are equivalent.

(2) $\Rightarrow $  (3):  {Let $F(z,\bar z,\lambda)$ and $\hat{F}(z,\bar z,\lambda)$ be as in $(2)$ and $(3)$ respectively. Then
$\hat{F} (z,\bar z,\lambda)= F (z,\bar z,\lambda)k(z,\bar z,\lambda)$. Inserting the unique Birkhoff decompositions $F(z,\bar z,\lambda) = F_-(z,\lambda)F_+(z,\bar z,\lambda)$ and $\hat{F}(z,\bar z,\lambda) = \hat{F}_- (z,\lambda)\hat{F}_+(z,\bar z,\lambda)$  we infer $F_-(z,\lambda) = \hat{F}_-(z,\lambda)$ and the claim follows.}

(3) $\Rightarrow$ (4):  {By Theorem \ref{thm-CC},}  { $ \hat{C}(z,\lambda) = \hat{F} (z,\bar z,\lambda)\hat{V}_+(z,\bar z,\lambda)$, where
$\hat{V}_+ $ is actually real-analytic.
 Inserting the   Birkhoff decomposition $\hat{F}(z,\bar z,\lambda) = \hat{F}_-(z,\lambda) \hat{F}_+(z,\bar z,\lambda)$, we obtain
 $\hat{C} (z,\lambda)= \hat{F}_-(z,\lambda) \hat{F}_+(z,\bar z,\lambda) V_+(z,\bar z,\lambda) $ and the claim follows.}

(4) $\Rightarrow$ (2):  {Let $\hat{C}(z,\lambda)$ be any holomorphic extended frame for $\mathcal{F}$. Consider the Iwasawa decomposition  $F(z,\bar z,\lambda)=\hat{C}(z,\lambda)  \tilde{C}_+(z,\bar z,\lambda)$ near $z_0$, where  $\tilde{C}_+\in \Lambda^{+} G^{\mathbb{C}}_{\sigma}$
and where $F(z,\bar z,\lambda)$ and  $\tilde{C}_+(z,\bar z,\lambda)$ attain the  $e$ at $z_0$.
Since $\hat{C}(z,\lambda) $ only contains finitely many negative powers of $\lambda$ by assumption, also $F(z,\bar z,\lambda)$ contains only finitely many negative powers of $\lambda$. Since $F(z,\bar z,\lambda)$ is real, it is a Laurent polynomial.}

(4) $ \Leftrightarrow$ (5): Note that for any two holomorphic extended frames $C_1(z,\lambda)$ and $C_2(z,\lambda)$ there exists some $W_+(z,\lambda)\in \Lambda^{+}G^{\mathbb{C}}_{\sigma}$ such that $C_1(z,\lambda)=C_2(z,\lambda)W_+(z,\lambda)$ holds.

 For the case $\tilde{M}=S^2$,   {using the results just proven} for $M_1=S^2\setminus\{\infty\}$ and $M_2=S^2\setminus\{0\}$ respectively, one will obtain  {the last claim of the proposition. For more details see
 $(2)$  of Remark \ref{sphere} and Section 3.2 of \cite{DoWa12}.}
 \end{proof}

We wonder, under what conditions the
based (at $z_0$) normalized  extended frame  {$F_-(z,\lambda)$} will be a Laurent polynomial.
 Let $\eta$ denote the normalized potential  { $F_-(z,\lambda)^{-1} \mathrm{d} F_-(z,\lambda) = \eta$.
 Then $F_-(z,\lambda), F_-(z,\lambda)|_{z=z_0} = e$}, can be obtained from $\eta$ by an application  of the  {standard Picard iteration of the theory of ordinary differential equations. Since $\eta$ is a multiple of
 $\lambda^{-1},$} it is easy to see that each step of the Picard iteration
  {decreases the occurring power of $\lambda$ by $-1$.} So $F_-(z,\lambda)$ is a Laurent polynomial if and only if the Picard iteration stops after finitely many steps. { See for example, Section 1 of \cite{Gu2002}}.
The most natural reason for the  Picard iteration to stop is that the  normalized potential $\eta$ takes values in some nilpotent Lie algebra (Note: If $\eta(z)$ is only nilpotent for every $z\in \tilde M$, it does not follow, in general, that the Picard iteration will stop.).

The following result is a slight generalization of Appendix B of \cite{BuGu}, (1.1) of \cite{Gu2002}. We would like to point out  that in particular $G$ does not need to be compact.

\begin{proposition}  Let $\mathfrak{n}\subset\mathfrak{g}^{\C}$ be a nilpotent subalgebra and assume that $\eta$ is a (  holomorphic or meromorphic ) potential of some harmonic map such that  $\eta(z)\in\mathfrak{n}$ for all
 $z\in \tilde M \setminus \lbrace$ poles of $\eta \rbrace $, and $\eta$ only contains finitely many positive powers of $\lambda$.  Let $\mathcal{F}:\tilde{M}_0\rightarrow G/K$ be the harmonic map associated with $\eta$ on an open subset $\tilde{M}_0\subset\tilde{M}$. Then  $\mathcal{F}$ is of finite uniton type.
\end{proposition}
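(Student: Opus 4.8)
The plan is to construct directly, from $\eta$, a holomorphic (resp. meromorphic) extended frame of $\mathcal{F}$ that is a Laurent polynomial in $\lambda$, and then to quote Proposition \ref{prop-fut}. Normalizing at the base point $z_0$, let $C(z,\lambda)$ be the solution of $C^{-1}dC=\eta$, $C(z_0,\lambda)=e$, written out by Picard iteration:
\[
C(z,\lambda)=e+\sum_{k\ge 1}\int_{z_0}^{z}\!\!\int_{z_0}^{w_1}\!\!\cdots\int_{z_0}^{w_{k-1}}\eta(w_1)\,\eta(w_2)\cdots\eta(w_k),
\]
where the products are associative (matrix) products in $\mathfrak{g}^{\C}$. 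Since $\mathfrak{n}$ is a linear subspace and $\eta(z,\lambda)\in\mathfrak{n}$, writing $\eta=\sum_{j=a}^{b}\lambda^{j}\hat\eta_j(z)\,dz$ (finitely many positive powers by hypothesis, negative powers bounded below because $\eta$ is a DPW potential) each Fourier coefficient satisfies $\hat\eta_j(z)\in\mathfrak{n}$. Hence the $k$-th summand is an iterated integral whose matrix part lies in the $k$-th power $\mathfrak{n}^{k}$ of the associative algebra generated by $\mathfrak{n}$, and whose $\lambda$-support lies in the finite window $[ka,kb]$.

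The decisive step is to show that this series terminates. Here I would use that $\mathfrak{n}$ is a nilpotent subalgebra consisting of nilpotent endomorphisms: by Engel's theorem there is a basis of the representation space in which every element of $\mathfrak{n}$ is strictly upper triangular, so the generated associative algebra satisfies $\mathfrak{n}^{N}=0$ for some finite $N$ (one may take $N$ to be the dimension of the representation). Consequently every summand with $k\ge N$ vanishes identically, and
\[
C(z,\lambda)=e+\sum_{k=1}^{N-1}\int_{z_0}^{z}\!\!\cdots\int_{z_0}^{w_{k-1}}\eta(w_1)\cdots\eta(w_k)
\]
is a finite sum whose $\lambda$-powers all lie in the finite range $[(N-1)a,(N-1)b]\cup\{0\}$. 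In the holomorphic case the iterated integrals are holomorphic on $\tilde M$, and in the meromorphic case they are meromorphic with poles confined to the poles of $\eta$; in either situation $C$ is a genuine holomorphic (resp. meromorphic) extended frame of $\mathcal{F}$ which is a Laurent polynomial in $\lambda$.

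To finish, a Laurent polynomial contains only finitely many negative powers of $\lambda$, so Proposition \ref{prop-fut}(5) (for $\tilde M\neq S^2$) and its $S^2$-variant with ``meromorphic'' in place of ``holomorphic'' both yield that $\mathcal{F}$ is of finite uniton type; when $\eta$ is the normalized potential one may equivalently invoke criterion (3), since then $C=F_-$.

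The step I expect to be the real obstacle is the termination argument, and in particular the precise meaning of ``nilpotent''. Lie-algebra nilpotency alone is \emph{not} sufficient: an abelian subalgebra of semisimple elements is nilpotent as a Lie algebra, yet a potential such as $\eta=\lambda^{-1}X\,dz$ with $X$ semisimple produces $C=\exp(\lambda^{-1}Xz)$, which is not a Laurent polynomial. What is genuinely needed, and what Engel's theorem supplies, is nilpotency of the generated \emph{associative} algebra, equivalently that the elements of $\mathfrak{n}$ act as nilpotent endomorphisms. Thus the one point to be careful about is that the nilpotent subalgebras arising in the Burstall--Guest description indeed consist of nilpotent endomorphisms, so that $\mathfrak{n}^{N}=0$ may legitimately be invoked.
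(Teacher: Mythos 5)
Your proof is correct and follows essentially the same route as the paper's, which simply asserts that the Picard iteration for $dC=C\eta$ stops after finitely many steps and then invokes Proposition \ref{prop-fut}; you have merely supplied the termination argument that the paper dismisses as ``easy to see''. Your closing caveat is well taken and worth recording: the statement tacitly requires $\mathfrak{n}$ to consist of nilpotent endomorphisms (as is the case for the Burstall--Guest subalgebras $\mathfrak{u}^0_{\xi}$, being sums of root spaces $\mathfrak{g}^{\xi}_{j}$ with $j\geq 1$), since Lie-algebra nilpotency alone --- e.g.\ a Cartan subalgebra, or your example $\eta=\lambda^{-1}X\,dz$ with $X$ semisimple --- does not make the iteration terminate.
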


\begin{proof}
When  $\eta(z)\in\mathfrak{n}$ for all $z\in \tilde M$, then it is easy  to see that the Picard iteration producing the solution
 {$\mathrm{d} C(z,\lambda)= C(z,\lambda) \eta$,} $C(z_0,\lambda)=e$, stops after finitely many steps.
Since $\eta$ is a Laurent polynomial in $\lambda$,  {so is the solution} $C$. By Proposition \ref{prop-fut}, $\mathcal{F}$ is of finite uniton type.
\end{proof}

\begin{remark}
\
\begin{enumerate}
\item
The conformal Gauss maps of many Willmore surfaces (see \cite{DoWa11}, \cite{Wang-1} and \cite{Wang-3}) can be constructed as discussed in the proposition above.
In these examples  the group $G$ is non-compact.

 \item For harmonic maps into arbitrary Lie  groups $G$ with a bi--invariant non--degenerate metric, one can also produce harmonic maps of finite uniton type following the above procedure by considering $G$ as the symmetric space
$(G \times G)/G$. Harmonic $2-$spheres in $U(4)$
provide standard examples for the proposition (see Section 5 of \cite{BuGu} or Appendix B of \cite{Gu2002}). In this case the group $G = U(4)$ is compact.

\end{enumerate}
\end{remark}

  For the construction of examples of Willmore spheres it is very important to know that harmonic maps defined on $M=S^2$ are of finite uniton type.
 First of all  { we reiterate that the loop group approach in the sense of this paper,  {i.e. the DPW method,} using extended frames and not extended solutions,
also applies to harmonic maps from $S^2$ to any compact or non-compact
inner symmetric space ( see Remark \ref{sphere} above). }

 {Using the same approach one can retrieve the result of Uhlenbeck Theorem \cite{Uh},
Segal \cite{Segal}, Burstall-Guest \cite{BuGu}  for compact $G/K$
and generalize it to the case of non-compact inner symmetric spaces  $G/K$.}

\begin{theorem} \label{S2isfu}(Theorem 3.6, of \cite{DoWa13})  For every compact or non-compact {inner} symmetric space $G/K$ every harmonic map $\mathcal{F}: S^2 \rightarrow G/K$ is
of finite uniton type.
\end{theorem}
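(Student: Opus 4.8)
The plan is to prove both cases by a single mechanism: the compact case is the classical finiteness theorem of Uhlenbeck, Burstall--Guest and Segal, while the non-compact case is reduced to it using the fact that a harmonic map and its compact dual share the same normalized potential and the same meromorphic extended frame $F_-$. Throughout I use that $S^2$ is simply connected, so there is no monodromy obstruction and, after admitting at most two singular points as in the Remark following Theorem \ref{thm-DPW}, every relevant frame is globally defined on $S^2$.

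First, by Theorem \ref{th-potential-sphere} the harmonic map $\mathcal{F}\colon S^2\to G/K$ is obtained from a meromorphic normalized potential $\eta$, which takes values in $\lambda^{-1}\mathfrak{p}^{\mathbb{C}}$; write $\eta=\lambda^{-1}B(z)\,dz$ with $B$ a meromorphic $\mathfrak{p}^{\mathbb{C}}$-valued function on $\C P^1$. Let $F_-$ be the associated meromorphic extended frame, i.e. the solution of $F_-^{-1}dF_-=\eta$ with $F_-(z_0,\lambda)=e$ lying in $\Lambda_*^{-}G^{\mathbb{C}}_\sigma$. By the $S^2$-version of Proposition \ref{prop-fut} (statements $(1)\Leftrightarrow(3)$, with ``meromorphic'' in place of ``holomorphic''), it suffices to show that $F_-$ is a Laurent polynomial in $\lambda$.

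The key observation is that $F_-$ depends only on the complexified data. The compact dual $U/(U\cap K^{\mathbb{C}})$ has the same complexified Lie algebra $\mathfrak{g}^{\mathbb{C}}=\mathfrak{u}^{\mathbb{C}}$ and the same $\sigma$-eigenspace $\mathfrak{p}^{\mathbb{C}}$, hence $G^{\mathbb{C}}=U^{\mathbb{C}}$ and $\Lambda_*^{-}G^{\mathbb{C}}_\sigma=\Lambda_*^{-}U^{\mathbb{C}}_\sigma$. Therefore the very same $\eta$ is a meromorphic normalized potential of a harmonic map $\hat{\mathcal{F}}\colon S^2\to U/(U\cap K^{\mathbb{C}})$ into the compact dual, and solving the identical ODE produces the identical $F_-$; only the subsequent Iwasawa splitting, performed now against the compact real form $\Lambda U_\sigma$, differs, and it recovers $\hat{\mathcal{F}}$ rather than $\mathcal{F}$. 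Since $\hat{\mathcal{F}}$ is a harmonic map of $S^2$ into a \emph{compact} inner symmetric space, the classical theorem (see \cite{Uh}, \cite{BuGu}) applies: $\hat{\mathcal{F}}$ has finite uniton number, so once more by the $S^2$-version of Proposition \ref{prop-fut} its meromorphic extended frame is a Laurent polynomial in $\lambda$. As this frame is exactly the shared $F_-$, we conclude that $F_-$ is a Laurent polynomial, and hence, by Proposition \ref{prop-fut} read in $G/K$, that $\mathcal{F}$ is of finite uniton type.

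I expect the main obstacle to lie entirely in the compact input, i.e. the finiteness of the uniton number for harmonic two-spheres into compact symmetric spaces; this is the one genuinely deep ingredient, and I would simply invoke it. Two technical points then require care. First, one must check that $\hat{\mathcal{F}}$ is a bona fide harmonic map from all of $S^2$: built from the meromorphic potential $\eta$, it is defined away from the finitely many poles of $\eta$, and these are removable, so the classical theorem is indeed applicable. Second, one must make precise that the identification $G^{\mathbb{C}}=U^{\mathbb{C}}$ identifies the two Birkhoff decompositions, so that ``$F_-$ is shared'' is literal and not merely formal. A purely direct attack---expanding $F_-=e+\sum_{j\ge 1}\lambda^{-j}F_{-,j}(z)$ by Picard iteration and trying to force $F_{-,j}\equiv 0$ for large $j$---does not succeed in isolation, since meromorphy of the $F_{-,j}$ on $\C P^1$ does not by itself truncate the $\lambda$-expansion; it is exactly the reality and compactness of the dual that produce the truncation, which is why the reduction above is the efficient route.
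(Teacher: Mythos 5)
Your argument is correct and is essentially the route the paper intends (it cites Theorem 4.28 and Corollary 4.29 of \cite{DoWa1}, whose content is reproduced here as Theorem \ref{thm-noncompact} and Corollary \ref{cor-finite}): pass to the compact dual $U/(U\cap K^{\mathbb{C}})$, which shares the normalized potential and the meromorphic extended frame $F_-$, and invoke the Uhlenbeck--Segal--Burstall--Guest finiteness of the uniton number for harmonic two-spheres in compact targets. The one point to make fully explicit is the removability of singularities of the dual map $\hat{\mathcal{F}}$: this follows because $\hat{\mathcal{F}}$ is equivalently obtained by the globally solvable compact Iwasawa decomposition $F=F_U S_+$ of the smooth extended frame of $\mathcal{F}$ itself, so it is smooth wherever $\mathcal{F}$ is, rather than only away from the poles of $\eta$.
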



\subsection{ {Harmonic maps from Riemann surfaces to inner symmetric spaces which have a trivial monodromy representation}} \label{trivmonorep}

In the last subsection we considered exclusively simply-connected Riemann surfaces $M$.
Obviously then, the monodromy representation of any harmonic map $\mathcal{F}:M \rightarrow G/K$
is trivial. But there also exist non-simply-connected Riemann surfaces which admit harmonic maps
$\mathcal{F}:M \rightarrow G/K$ of  {``finite uniton type"} and thus have, by definition,  a trivial monodromy representation.   {Next we introduce the notion of ``finite uniton type".}

\subsubsection{ {Definition of ``finite uniton type"}}

In \cite{BuGu}, Burstall and Guest generalize the work of Uhlenbeck on harmonic maps
$\mathcal{F}:S^2 \rightarrow G$, $G=U(n)$,  to harmonic maps of ``finite uniton number" from a general Riemann surface into compact Lie groups  $G$  as well as compact inner symmetric spaces $G/K$.

 In our  {definition  below}  we separate out two  properties/parts contained implicitly in the definition of \cite{BuGu}, just above  loc.cit.,  Theorem 1.2,  and we also admit non-compact Lie groups and inner symmetric spaces.   The first property is that one considers maps which actually are defined on $M$, whence do not have any monodromy. The second property is that one can assume w.l.g. that the extended solution used in \cite{BuGu} is a Laurent polynomial in $\lambda$. These two properties define two classes of harmonic maps, the intersection of which gives the harmonic maps of finite uniton number. It would be interesting to investigate these two classes separately.

\begin{definition}\label{def-uni} (Finite uniton type -- arbitrary Riemann surface) Let $M$ be a Riemann surface  {and let $G/K$  {be} an inner symmetric space}. A harmonic map
$\mathcal{F}:M\rightarrow G/K$ is said to be of finite uniton type if there exists some extended frame $F$ of $\mathcal{F}$,  defined on the universal cover
$\tilde{M}$ of  {$M,$ satisfying $F(z_0,\bar{z}_0,\lambda)=e$}
and having the following two properties:
\begin{enumerate}[$(U1)$]
\item
 For all $\lambda \in S^1,$ the extended frame $F(z,\bar{z},\lambda): \tilde{M} \rightarrow (\Lambda  {G _{\sigma})} $ has trivial monodromy and {, in view of the last equation of
  Lemma \ref{frametransform}, therefore descends to a well defined map on $M$, i.e.
    $F(z,\bar{z},\lambda): M \rightarrow (\Lambda G_{\sigma})/K$,}
up to two singularities in the case of $M = S^2$.
\item $F(z,\bar{z},\lambda)$ is a Laurent polynomial in $\lambda$.
\end{enumerate}
We will say ``$\mathcal{F}$ has a trivial monodromy representation" if (U1) is satisfied.

 {We would also like to point out that we will use, by abuse of notation,  the same notation for the frame $F$ with values in
$\Lambda G _{\sigma}$ and its projection with values in $ \Lambda G_{\sigma}/K$.}

 \end{definition}

 \begin{remark}\
  \begin{enumerate}
  \item Note that  in Section 4 below we discuss the relations between the DPW theory (using extended frames) and the Burstall-Guest theory (using extended solutions).  It turns out that  {the  Definition \ref{def-uni} for
  the notion of  a finite uniton type harmonic map is equivalent to the  notion of a minimal  (finite) uniton number
 harmonic  map defined on page 546 of \cite{BuGu}. ( Also see Proposition \ref{typeequivnumber}.)}

 \item
We mainly want to use objects occurring in our approach (like extended frames) and also want to make it as clear as possible to what extent the two properties which make up the notion of
 {finite uniton type and minimal (finite) uniton number respectively  contribute to statements and proofs.}

\item Condition $(U1)$ is a very strong condition. Of course, in the case of a non-compact simply connected Riemann surface $M$   {and  {of} $S^2$ respectively,} it is always satisfied.  {We would like to point out that although the assumption $(U1)$ was not stated in \cite{BuGu} explicitly, it was used implicitly by using exclusively extended solutions defined on all of $M$, like in \cite{BuGu}, Theorem 4.5, where $M$ is an
 {arbitrary, compact or non-compact,} Riemann surface.}
\end{enumerate}
\end{remark}

\subsubsection{ {Harmonic maps with trivial monodromy}  {representation}}

 {In this subsubsection we consider all harmonic maps satisfying property $(U1)$. Recall, that we assume that all harmonic maps in this paper are considered to be full,
 {see Definition \ref{deffull}.} In the following proposition we will use, as in the definition just above, by abuse of notation the same notation for the maps $C $ and $F_-$
and their projections respectively to some image quotient space.}

\begin{proposition} \label{prop-frame}
  Let   {$M \neq S^2$}   be any Riemann surface and let $\mathcal{F}: M\rightarrow G/K$ be a harmonic map which is full in $G/K$.  Then the following statements are equivalent

   \begin{enumerate}
\item The  monodromy matrices $\chi(g,\lambda)$, $g\in\pi_1(M)$, $\lambda\in S^1$, satisfy $\chi(g,\lambda)=e,\ \hbox{ for all } g\in\pi_1(M),\ \lambda\in S^1$.

\item Any extended frame $F$ of $\mathcal{F}$ is defined
on $M$ ``modulo $K$'', i.e. $F$ descends from a map defined on $\tilde{M}$ with values in
${\Lambda G}_{\sigma}$ to
$F(z, \bar{z},\lambda ): M \rightarrow  ({\Lambda G}_{\sigma}) / K$.

\item Any holomorphic extended frame $C$ of $\mathcal{F}$ is defined on $M$
``modulo $\Lambda^+G^{\mathbb{C}}_{\sigma}$'' , i.e. $C$ descends from a map defined on $\tilde{M}$ with values in   $\Lambda G^{\C}_{\sigma}$ to
$C(z,\lambda):M\rightarrow \Lambda G^{\C}_{\sigma}/\Lambda^+ G^{\C}_{\sigma}$.

 \item The normalized   extended frame $F_-$ relative to any base point is defined on $M$, i.e. $F_-$ descends from a map defined on $\tilde{M}$ with values in $\Lambda^- G^{\C}_{\sigma}$ to  $F_-(z,\lambda):M\rightarrow \Lambda^- G^{\C}_{\sigma}$.
 \end{enumerate}
  {For $M = S^2$ the four statements just above still hold in view of  {(2) of Remark \ref{sphere} of Section 2,} if one admits up to two singularities {in the extended frames.}}
 \end{proposition}

\begin{proof}
 {In view of Theorem 3.6  the case $M = S^2$ is trivially satisfied. Hence let us assume $M$ is any Riemann surface different from $S^2$.}
By general theory we have for every $g\in\pi_1(M)$ and all $\lambda\in S^1$
on the universal cover $\tilde{M}$ the equations:
\[\begin{split}
F(g.z,\overline{g.z},\lambda)&= \chi(g,\lambda) F(z,\bar{z},\lambda) K(g,z,\bar{z}),\\
C(g.z,\lambda)&=\chi(g,\lambda) C(z,\lambda) W_+(g,z,\bar{z},\lambda),\\
F_-(g.z,\lambda)&=\chi(g,\lambda) F_-(z,\lambda) L_+(g,z,\bar{z},\lambda),\\
  \end{split}\]
for some maps
$K(g,z,\bar{z}): \tilde{M} \rightarrow K $,
$ W_+ (g,z,\bar{z},\lambda),
\ L_+(g,z,\bar{z},\lambda): \tilde{M}\rightarrow \Lambda^+ G^{\C}_{\sigma}$, and $\chi(g,\lambda)\in\Lambda G_{\sigma}$. Therefore (1) implies (2), (3) and (4).

``(2) $\Rightarrow$ (1)": From the assumption we obtain $F(g.z,\overline{g.z},\lambda)= F(z,\bar{z},\lambda)\tilde{K}(g,z,\bar{z})$. This shows \[\chi(g,\lambda)F(z,\bar{z},\lambda)K(g,z,\bar{z})=F(z,\bar{z},\lambda)\tilde{K}(g, z,\bar{z}).\] Since $\mathcal{F}\equiv F\mod K$, this equation implies
\[\mathcal{F}(z,\bar{z},\lambda)=\chi(g,\lambda)\mathcal{F}(z,\bar{z},\lambda), \hbox{ for all } z\in M,\ \lambda\in S^1.\]
When $\mathcal{F}$ is full, $\chi(g,\lambda)=e$ follows.

``(4) $\Rightarrow$ (2)": If $F_-$ is defined on $M$, then  $F_-(g.z,\lambda)=F_-(z,\lambda)$ for all $g\in \pi_1(M)$, $\lambda\in S^1.$ But then $F=F_-F_+$ satisfies
\[\begin{split}
F(g.z,\overline{g.z},\lambda)&=F_- (g.z,\lambda) F_+(g.z,\overline{g.z},\lambda)\\
&=F_- (z,\lambda) F_+(g.z,\overline{g.z},\lambda)\\
&=F(z,\bar{z},\lambda)F_+(z,\bar{z},\lambda)^{-1}F_+(g.z,\overline{g.z},\lambda).\\
\end{split}\]
The reality of $F(g.z,\overline{g.z},\lambda)$ and $F(z,\bar{z},\lambda)$ yields $F_+(z,\bar{z},\lambda)^{-1}F_+(g.z,\overline{g.z},\lambda)=K(g, z,\bar{z})$. Hence (2) follows.

``(3) $\Rightarrow$ (4)": Assume $C$ is a holomorphic extended frame defined on $M$. Then  $C(g.z,\lambda)=C(z,\lambda)B_+(g,z,\bar{z},\lambda)$
by assumption and $F_-=CS_+$ implies
\[F_-(g.z,\lambda)=C(z,\lambda)B_+(g,z,\bar{z},\lambda)S_+(g.z,\overline{g.z},\lambda)=F_-(z,\lambda)  T_+(g,z,\bar{z},\lambda).\]
But since $F_-=e+O(\lambda^{-1})$, this Birkhoff decomposition is unique and $T_+(g,z,\bar{z},\lambda)=e$ follows.
\end{proof}

\begin{corollary}
 { Let  $M $   be any Riemann surface and $G/K$ any inner symmetric space and let $\mathcal{F}: M\rightarrow G/K$ be any harmonic map  in $G/K$.  Then the normalized extended frame  $F_-$,  is fixed under the action  of $\pi_1(M)$.
 In particular,  we have $F_- (g.z, \lambda) =   F_- (z, \lambda)$ for all $z \in \tilde{M}$ and $g \in \pi_1(M)$.
 As a consequence, the normalized potential  {$\eta_- = F_-(z,\lambda)^{-1} \dd F_-(z,\lambda)$,}  is fixed under the action  of
 each $g \in \pi_1(M)$.
}
\end{corollary}

If we assume $M$ to be non-compact then we obtain the following stronger result:
\begin{theorem}\label{thm-invariant-potential} The following statements are equivalent for harmonic maps
$\mathcal{F}:M\rightarrow G/K$ into the
inner symmetric space $G/K$, compact or non-compact, where  $M$ is any non-compact Riemann surface:
 \begin{enumerate}
\item $\mathcal{F}_{\lambda}: \tilde{M} \rightarrow G/K$ has trivial monodromy for all $\lambda \in S^1$.
\item There exists some extended frame $F$ which satisfies
\[F(g.z,\overline{g.z},\lambda)=F(z,\bar{z},\lambda) k(g, z, \bar z)~~ \hbox{ for all }~~\lambda\in S^1,\ g\in \pi_1(M)
\hbox{ and some } k(g, z, \bar z) \in K.\]
\item There exists a holomorphic extended frame $C$ for the harmonic map $\mathcal{F}$ which satisfies
\[C(g.z,\lambda)=C(z,\lambda)\ \hbox{ for all }\  \lambda\in S^1,\ g\in\pi_1(M).\]
\item The integrated normalized potential $F_-$ of the harmonic map $ \mathcal{F}:M\rightarrow G/K$ satisfies
\[F_-(g.z,\lambda)=F_-(z,\lambda)\ \hbox{ for all }\  \lambda\in S^1,\ g\in\pi_1(M).\]
 \end{enumerate}
 \end{theorem}

\begin{proof}
From Proposition \ref{prop-frame} we know that $(1)$ and $(4)$ are equivalent.

 {
By the proof of part $(1)$ of Theorem \ref{thm-monodr} we know that in general there exists some holomorphic extended frame satisfying  $C(g.z,\lambda)=M(g,\lambda)C(z,\lambda)$ for all $g \in \pi_1(M).$
Therefore,  by using such a $C(z,\lambda)$ in $(3)$  of
Proposition \ref{prop-frame} we observe that the monodromy is trivial if and only if
$C(z,\lambda)$ is invariant under $\pi_1(M).$ Finally, $(2)$ is equivalent to  $(2)$ of
Proposition \ref{prop-frame}.}
\end{proof}

\begin{remark}
It is not known (except for some examples)  for which $M$ and which $G/K$ the factor $k$ in $(2)$ can be removed.
\end{remark}

 { For   Willmore surfaces we consider the inner symmetric space
 $SO^+(1,n+3)/SO^+(1,3)\times SO(n)$. In this case we have in view of Theorem \ref{thm-monodr}:
 \begin{theorem}
 If the inner symmetric space is $SO^+(1,n+3)/SO^+(1,3)\times SO(n),$ then the
  statements of Theorem \ref{thm-invariant-potential} hold for any Riemann surface, compact or non-compact.
 \end{theorem}
 }

\subsubsection{Dressing and trivial monodromy}

Dressing is a very useful operation which permits to construct new harmonic maps from a given one.
Before applying this to finite uniton harmonic maps we
recall the definition of ``dressing" (e.g. \cite{DPW}, see also \cite{Gu-Oh}, \cite{BP}, \cite{Do-Ha3}, \cite{TU1}).

Let $\mathcal{F}:\tilde{M}\rightarrow G/K,$ a harmonic map, and  $F$  an extended frame satisfying
$F(z_0,\bar{z}_0,\lambda) = e$ for some base point $z_0$. Let $h_+\in\Lambda^+G^{\C}_{\sigma}$ and consider the Iwasawa splitting (on an open subset $\tilde{M}'$ of $\tilde{M}$ containing $z_0$)
\begin{equation}
\label{eq-dress1} h_+F=\hat{F}\hat{W}_+.
\end{equation}
Then $\hat{F}(z, \bar z,\lambda)$ defines a  {family of harmonic maps on $\tilde M'$}
  \begin{equation}
\label{eq-dress2}
 {\hat{\mathcal{F}}_\lambda : = h_+ \sharp {\mathcal{F}_\lambda}  : \tilde{M}' \rightarrow G/K,\
\hat{\mathcal{F}}_{\lambda} ( z, \bar z) :=\hat{F} (z, \bar z,\lambda) \mod K . }
\end{equation}

\begin{lemma}
Using the notation introduced just above,  {$\hat{F}(z, \bar z,\lambda)$} is an extended frame of
$\hat{\mathcal{F}}(z, \bar z,\lambda) = { (h_+\sharp\mathcal{F})(z, \bar z, \lambda)}$ and satisfies w.l.g.  {$\hat{F}(z_0, \bar z_0,\lambda) = e$}.
If $M(g,\lambda)$ denotes the monodromy representation of $F(z, \bar z,\lambda)$, $g \in \pi_1(M)$, then the
monodromy representation of $\hat{F}$ is given by
\begin{equation}
 \hat{M}(g,\lambda) =    h_+ M(g,\lambda) h_+^{-1}.
\end{equation}
\end{lemma}

\begin{corollary} \label{Cor1}
If $\mathcal{F}_\lambda$ descends  for some $\lambda_0 \in S^1$ to a harmonic map $\mathcal{F}_{\lambda_0}:M\rightarrow G/K$, i.e. if  {$M(g,\lambda_0) = e$} for all $g \in \pi_1(M)$,
then the corresponding dressed harmonic map $h_+\sharp\mathcal{F}_{\lambda_0}$ will descend to
 $M' = \tilde{M}' \mod \pi_1(M)$.
\end{corollary}

A particularly interesting feature of harmonic maps with trivial monodromy representation
 is that  {the property of trivial monodromy representation is preserved under dressing action}.

\begin{theorem}\label{thm-dress} If $\mathcal{F}:\tilde{M}\rightarrow G/K$ is a harmonic map with
trivial monodromy representation, then the associated family $\mathcal{F}_{\lambda}$ consists of harmonic maps with
trivial monodromy and all dressed harmonic maps $h_+\sharp\mathcal{F}_{\lambda}$, $h_+\in\Lambda^+G^{\C}_{\sigma}$,
have trivial monodromy representations.
\end{theorem}


\subsection{ {Relating harmonic maps into a non-compact inner symmetric
 space $G/K$ to  harmonic maps into  the compact dual  inner symmetric space of $G/K$}}

 {In \cite{BuGu}, Burstall and Guest have given  for compact inner symmetric spaces $G/K$  an explicit
description of those normalized potentials which produce finite uniton type harmonic maps.
To make their work applicable to the non-compact case, we established in \cite{DoWa13} a duality theorem between harmonic maps into non-compact inner symmetric spaces and their compact dual. The most important feature of this result is that the corresponding harmonic maps share the same normalized potential (See the next theorem).}
 {We will show  briefly how this duality relation works and what it implies for the properties $(U1)$
and $(U2)$}.

 {Let $G/K$ be a  non-compact inner symmetric space, defined by
 $\sigma$,  and $\tilde{G}$ the (connected) simply-connected cover  of $G$.
Then $G/K = \tilde{G}/\tilde{K}$ for some closed subgroup $\tilde{K}$ of  $\tilde{G}.$ }
 {Let $\tilde{U},$  be a (connected, simply-connected  and semi-simple)  maximal compact Lie subgroup
of $\tilde{G}^{\C}$, the complexification of $\tilde{G}.$}  We  {can also  assume w.l.g. that   $\tilde{G}^{\C}$  is a complex matrix Lie group with Lie subgroup $G$.
  Moreover, we can assume w.l.g. that     $\tilde{U}$   is invariant under the $\C-$linear extension of $\sigma$ to $G^\C$. Finally, by abuse of notation, for $G = SL(2,\R)$ we also denote by $\tilde{G}$ and $\tilde{K}$ the natural image of  $\tilde{G}$ and $\tilde{K}$ in $G^\C$. See, e.g. \cite{Hoch}.}

  We consider the compact dual $\tilde{U}/(\tilde{U}\cap \tilde{K}^{\C})$  of $G/K$ which
 clearly also is defined by $\sigma$ (see \cite{DoWa13} for more details).
Moreover,  $ (\tilde{U}\cap \tilde{K}^{\C})^\C = \tilde{K}^{\C}$ holds
(see {Theorem 1.1 of \cite{DoWa13}}). From this we infer:
\begin{equation}\label{eq-loop-com-n-com}
    \Lambda \tilde{G}^{\C}_{\sigma}=\Lambda \tilde{U}^{\C}_{ {\sigma}},\    \Lambda_*^- \tilde{G}^{\C}_{\sigma}=\Lambda_*^- \tilde{U}^{\C}_{ {\sigma}},\    \Lambda^+\tilde{ G}^{\C}_{\sigma}=\Lambda ^+ \tilde{U}^{\C}_{ {\sigma}}.
\end{equation}
As a consequence, for any extended framing $F(z,\bar{z},\lambda)$ of  $\mathcal{F}: M \to G/K$, the decomposition
 {\[F(z,\bar{z},\lambda)=F_{\tilde{G,-}} (z,\lambda) F_{\tilde{G},+}(z,\bar{z},\lambda)=F_{\tilde{U},-}(z,\lambda)   F_{\tilde{U},+}(z,\bar{z},\lambda)\]
shows
 \begin{equation}\label{eq-norm-framing}
\begin{split}F_{\tilde{G},-} (z,\lambda)&=F_{\tilde{U},-}(z,\lambda),\\
 \eta&=\lambda^{-1}\eta_{-1}\mathrm{d}z=
(F_{\tilde{G},-})(z,\lambda)^{-1}\mathrm{d}F_{\tilde{G},-}(z,\lambda)=(F_{\tilde{U},-})^{-1}(z,\lambda)\mathrm{d}F_{\tilde{U},-}(z,\lambda).
 \end{split}
 \end{equation}}
\begin{theorem} \label{thm-noncompact}$($\cite{DoWa13}$)$
Let
$G/K = \tilde{G} / \tilde{K} $ be a non-compact inner symmetric space with $ \tilde{G} $ simply-connected.
Let $\tilde{U} / (\tilde{U} \cap \tilde{K}^{\C})$ denote the dual compact symmetric space. Then the space
$ \tilde{U} / ( \tilde{U} \cap \tilde{K}^{\C} )$ is inner and  $\Lambda \tilde{G}_{\sigma}^{\C} =
\Lambda \tilde{U}_{\sigma}^{\C} $ holds.

Let $\mathcal{F}:\tilde{M} \rightarrow G/K = \tilde{G} / \tilde{K} $ be a harmonic map, where $ \tilde{M} $
is a simply-connected Riemann surface. Let  {$F(z,\bar{z},\lambda)$ denote an extended frame
of $ \mathcal{F} $. For $F(z,\bar{z},\lambda)$ define
$ F_ {\tilde{U}} $   via the Iwasawa decomposition $F(z,\bar{z},\lambda)=F_{\tilde{U}}(z,\bar{z},\lambda) S_+(z,\bar{z},\lambda)$, with
$F_{\tilde{U}}(z,\bar{z},\lambda)\in \Lambda \tilde{U}_{\sigma}$, $S_+ (z,\bar{z},\lambda)\in \Lambda^+ \tilde{U}_{\sigma}^{\C}$.
Then  \[\hbox{$\mathcal{F}_{\tilde{U}} :\tilde{M} \rightarrow \tilde{U} / \tilde{U} \cap \tilde{K}^{\C}$,
$\mathcal{F}_{\tilde{U}} \equiv F_{\tilde{U}}(z,\bar{z},\lambda) \mod \tilde{U} \cap \tilde{K}^{\C}$}\]} is a harmonic map  {for each fixed $\lambda\in S^1$}.

Moreover, the harmonic maps $\mathcal{F}$ and $\mathcal{F}_{\tilde{U}}$ have the same normalized potential.
\end{theorem}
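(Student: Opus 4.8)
The plan is to reduce the whole statement to the single structural fact that the complexified twisted loop groups of $\tilde{G}$ and of its compact dual $\tilde{U}$ coincide. First I would record the duality at the Lie-algebra level: inside $\mathfrak{g}^{\mathbb{C}}$ one has $\tilde{\mathfrak{u}} = \mathfrak{k}\oplus i\mathfrak{p}$, so that $\tilde{\mathfrak{p}} = i\mathfrak{p}$, $(\tilde{\mathfrak{p}})^{\mathbb{C}} = \mathfrak{p}^{\mathbb{C}}$ and $\tilde{\mathfrak{u}}\cap\mathfrak{k}^{\mathbb{C}} = \mathfrak{k}$. Since $\tilde{G}$ is simply connected, $\tilde{G}^{\mathbb{C}}$ is the simply-connected complex group with Lie algebra $\mathfrak{g}^{\mathbb{C}}$, and the analytic subgroup $\tilde{U}$ for $\tilde{\mathfrak{u}}$ shares this complexification, $\tilde{U}^{\mathbb{C}} = \tilde{G}^{\mathbb{C}}$. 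As $\sigma$ is inner and preserves the compact real form $\tilde{\mathfrak{u}}$, it is conjugation by an element that may be chosen in $\tilde{U}$ and hence restricts to an inner involution of $\tilde{U}$; this proves $\tilde{U}/(\tilde{U}\cap\tilde{K}^{\mathbb{C}})$ is inner. Because the twisted complex loop groups depend only on the pair $(\tilde{G}^{\mathbb{C}},\sigma)$, it follows that $\Lambda\tilde{G}^{\mathbb{C}}_{\sigma} = \Lambda\tilde{U}^{\mathbb{C}}_{\sigma}$, and likewise $\Lambda^{+}\tilde{G}^{\mathbb{C}}_{\sigma} = \Lambda^{+}\tilde{U}^{\mathbb{C}}_{\sigma}$.

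Next I would produce $\mathcal{F}_{\tilde{U}}$ and check its harmonicity. Starting from the extended frame $F$ of $\mathcal{F}$, normalized by $F(z_0,\lambda)=e$ and valued in $\Lambda\tilde{G}_{\sigma}\subset\Lambda\tilde{U}^{\mathbb{C}}_{\sigma}$, apply the $\tilde{U}$-Iwasawa decomposition of Theorem \ref{thm-decomposition} to write $F = F_{\tilde{U}}S_+$ with $F_{\tilde{U}}\in\Lambda\tilde{U}_{\sigma}$ and $S_+\in\Lambda^{+}\tilde{U}^{\mathbb{C}}_{\sigma}$; because $\tilde{U}$ is compact this splitting is global, so $F_{\tilde{U}}$ is defined on all of $\tilde{M}$, and evaluating the (unique) decomposition at $z_0$ gives $F_{\tilde{U}}(z_0,\lambda)=e$, $S_+(z_0,\lambda)=e$. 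To see $F_{\tilde{U}}$ is an extended frame I would compute $F_{\tilde{U}}^{-1}dF_{\tilde{U}} = \mathrm{Ad}(S_+)\,(F^{-1}dF) - (dS_+)S_+^{-1}$ and invoke the standard DPW argument that a $\Lambda^{+}$-gauge preserves the defining shape $\lambda^{-1}(\cdot)'_{\tilde{\mathfrak{p}}} + (\cdot)_{\mathfrak{k}} + \lambda(\cdot)''_{\tilde{\mathfrak{p}}}$ of an extended frame relative to $\tilde{\mathfrak{u}} = \mathfrak{k}\oplus\tilde{\mathfrak{p}}$; then the harmonicity criterion of \cite{DPW} shows $\mathcal{F}_{\tilde{U}} = F_{\tilde{U}}\bmod(\tilde{U}\cap\tilde{K}^{\mathbb{C}})$ is harmonic.

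For the final assertion I would invoke uniqueness of the Birkhoff factorization. On the dense open set $\tilde{M}\setminus\D_0$ of Theorem \ref{thm-DPW} write $F = F_-F_+$ with $F_-\in\Lambda_{*}^{-}\tilde{G}^{\mathbb{C}}_{\sigma}$ and $F_+\in\Lambda^{+}\tilde{G}^{\mathbb{C}}_{\sigma}$, so that the normalized potential of $\mathcal{F}$ is $\eta = F_-^{-1}dF_-$. Combining with the Iwasawa splitting gives
\[
F_{\tilde{U}} = F_-F_+S_+^{-1} = F_-\,(F_+S_+^{-1}),
\]
and since $\Lambda^{+}\tilde{G}^{\mathbb{C}}_{\sigma} = \Lambda^{+}\tilde{U}^{\mathbb{C}}_{\sigma}$ the factor $F_+S_+^{-1}$ lies in $\Lambda^{+}\tilde{U}^{\mathbb{C}}_{\sigma}$, while $F_-\in\Lambda_{*}^{-}\tilde{U}^{\mathbb{C}}_{\sigma}$ still satisfies $F_-(\infty)=e$. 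Thus the displayed identity is exactly the big-cell Birkhoff decomposition of $F_{\tilde{U}}$ in the shared complex loop group; by the uniqueness in Theorem \ref{thm-decomposition}(ii) the meromorphic extended frame of $\mathcal{F}_{\tilde{U}}$ is this same $F_-$, and therefore its normalized potential equals $F_-^{-1}dF_- = \eta$, which lies in $\lambda^{-1}\mathfrak{p}^{\mathbb{C}} = \lambda^{-1}(\tilde{\mathfrak{p}})^{\mathbb{C}}$ as required. This proves that $\mathcal{F}$ and $\mathcal{F}_{\tilde{U}}$ share the normalized potential.

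The loop-group algebra in the last paragraph is formal; the genuine content sits in the first step. I expect the main obstacle to be the group-level identification $\tilde{G}^{\mathbb{C}} = \tilde{U}^{\mathbb{C}}$, equivalently $\Lambda\tilde{G}^{\mathbb{C}}_{\sigma} = \Lambda\tilde{U}^{\mathbb{C}}_{\sigma}$: this is precisely where the hypothesis that $\tilde{G}$ be simply connected is indispensable, for otherwise the two real forms need only share a complexified Lie algebra, not a complexified group, and then the Birkhoff cell of the $\tilde{G}$-theory and that of the $\tilde{U}$-theory need not be the same subset, which would break the uniqueness argument. A secondary, easily dispatched point is the compatibility of the base-point normalizations of the two theories, which follows by evaluating both the Iwasawa and the Birkhoff decompositions at $z_0$.
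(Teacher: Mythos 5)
Your argument is correct and follows exactly the route that the paper indicates: this theorem is quoted from \cite{DoWa1} without an in-text proof, but the identifications you use --- $\tilde{U}^{\C}=\tilde{G}^{\C}$, $(\tilde{U}\cap\tilde{K}^{\C})^{\C}=\tilde{K}^{\C}$, the coincidence of the twisted loop groups as in \eqref{eq-loop-com-n-com}, the global $\tilde{U}$-Iwasawa splitting followed by the $\Lambda^{+}$-gauge argument for harmonicity, and the uniqueness of the big-cell Birkhoff factorization giving $F_{-G}=F_{-U}$ as in \eqref{eq-norm-framing} --- are precisely the skeleton the paper records in Section 5. Your filled-in version, including the observation that simple connectedness is what forces the two real forms to share one complexified group, is a faithful reconstruction of that proof.
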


As a consequence of the theorem above we obtain (see also Theorem 3.6  of \cite{DoWa13})
\begin{corollary}\label{cor-finite}
Let  {$F(z,\bar{z},\lambda)$ and $F_{\tilde{U}}(z,\bar{z},\lambda)$} be the extended frames defined as above. Then
 \begin{enumerate}
\item
 $F(z,\bar{z},\lambda)$ satisfies $(U1)$ if and only if $F_{\tilde{U}}(z,\bar{z},\lambda)$ satisfies $(U1)$;
\item
  $F(z,\bar{z},\lambda)$ satisfies $(U2)$ if and only if $F_{\tilde{U}}(z,\bar{z},\lambda)$ satisfies $(U2)$.
\end{enumerate}
Therefore $\mathcal{F}$ is of finite uniton type if and only if $\mathcal{F}_{\tilde{U}}$ is of finite uniton type.
\end{corollary}

\begin{proof} Let  {$F(z,\bar{z},\lambda): \tilde{M}\rightarrow \tilde{G} / \tilde{K}$ denote an extended frame of $\mathcal{F}$.
Then we have  $F(z,\bar{z},\lambda)=F_{\tilde{U}}(z,\bar{z},\lambda)S_+(z,\bar{z},\lambda)$} as stated in Theorem \ref{thm-noncompact}. We need to verify
the properties (U1) and (U2) in Definition \ref{def-uni}.

(U2): If  {$F(z,\bar{z},\lambda)$ is a Laurent polynomial in $\lambda$, then $F_{\tilde{U}}(z,\bar{z},\lambda)$ contains only finitely many
negative powers of $\lambda$ since $F_{\tilde U}(z,\bar{z},\lambda)=F(z,\bar{z},\lambda)(S_+(z,\bar{z},\lambda))^{-1}$. Since $F_{\tilde U}(z,\bar{z},\lambda)$ satisfies also a reality
condition, $F_{\tilde U}(z,\bar{z},\lambda)$ is a Laurent polynomial. The converse statement follows by interchanging  $F(z,\bar{z},\lambda)$ and $F_{\tilde{U}}(z,\bar{z},\lambda)$.}

(U1):  We distinguish two cases: The case of $M=S^2$ is trivial, but one needs to recall from Section 3.2 of \cite{DoWa12}  that in this case extended frames have some singular points in general.

 If $M$ is different from $S^2$, then we obtain
  \[F(g.z,\overline{g.z},\lambda)=\chi(g,\lambda)F(z,\bar{z},\lambda) k(g,z,\bar{z}) \ \hbox{ for all }\ g\in\pi_1(M)\]
with $k(g,z,\bar{z})\in K$ and $\chi\in \Lambda {\tilde{G}_{\sigma}}$. Assume now that $F$ satisfies (U1).
Then, without loss of generality $\chi(g,\lambda)=e$ for all $g\in\pi_1(M)$.
Inserting $F=F_{\tilde{U}}S_+$ we obtain
\[F_{\tilde{U}}(g.z,\overline{g.z},\lambda)=F_{\tilde{U}}(z,\bar{z},\lambda)u(g, z,\bar{z}), u\in {\tilde{U}}\cap {\tilde{K}}^{\C},\]
 where $u(g,z,\bar{z}) = S_+ (z, \bar{z},\lambda) k(g,z,\bar{z}) S_+ (g.z, \overline{g.z},\lambda)^{-1}.$
 {Hence both $F_{\tilde{U}}$ and   $\mathcal{F}_{\tilde U}$ satisfies $(U1)$.}

Assume now that $F_{\tilde{U}}$ satisfies $(U1).$
Hence $F_{\tilde{U}}(g.z,\overline{g.z},\lambda)=F_{\tilde{U}}(z,\bar{z},\lambda){u(g,z,\bar{z})}$
for all $g\in\pi_1(M)$
and  with $u \in \Lambda^+U^\C_\sigma = \Lambda G^\C_\sigma$.  Substituting $F_{\tilde{U}} = F (S_+)^{-1}$  from above we obtain
\[
F(g.z,\overline{g.z},\lambda)=F(z,\bar{z},\lambda) L_+(g,z,\bar{z},\lambda),
\]
where  $L_+(g,z,\bar{z},\lambda) = S_+(z,\bar{z},\lambda)^{-1} u(g,z,\bar{z}) S_+(g.z,\overline{g.z},\lambda)$. But then $L_+(g,z,\bar{z},\lambda) = k(g,z,\bar{z}) \in K$ and $F$ satisfies $(U1)$.
Hence $F$ and then also $\mathcal{F}$  satisfy $(U1)$.
\end{proof}

Thus finding the normalized potentials  of the finite uniton type harmonic maps $\mathcal{F}$ into the
non-compact inner symmetric space $G/K$ means finding the normalized potentials for the corresponding
finite uniton type harmonic maps $\mathcal{F}_{\tilde{U}}$ into the compact dual ${\tilde{U}}/({\tilde{U}}\cap {\tilde{K}}^{\C})$.
The latter task has been greatly simplified by \cite{BuGu}.\\


 \subsection{Remarks on monodromy, dressing and some application to Willmore surfaces}

Above we have defined finite uniton harmonic maps by the conditions (U1) and (U2) of Definition \ref{def-uni}.
The condition (U1) is always satisfied if $M$ is simply connected (including $S^2$ as it was remarked
in $(2)$ of Remark \ref{sphere}  that for the case of $M = S^2$, the usual loop group approach
applies, modulo a minor precaution).
 Corollary \ref{cor-finite} yields immediately
\begin{proposition}\label{prop-trivial-monod} With the assumptions and the notations of Theorem \ref{thm-noncompact}
the following statements are equivalent:
 \begin{enumerate}
\item $ \mathcal{F}$ has trivial monodromy  {representation,}

\item $\mathcal{F}_{\tilde{U}}$ has trivial monodromy  {representation.}
 \end{enumerate}
\end{proposition}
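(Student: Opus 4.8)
The plan is to recognize that this proposition is nothing more than a reformulation of part (1) of Corollary \ref{cor-finite} once the terminology is unwound, so the real work has already been done. First I would recall from Definition \ref{def-uni} that the phrase ``$\mathcal{F}$ has a trivial monodromy representation'' is \emph{defined} to mean precisely that the extended frame $F$ satisfies condition (U1), i.e. that $F(z,\lambda)$ descends to a well-defined map $M \to (\Lambda G^{\C}_{\sigma})/K$ (allowing up to two singularities when $M = S^2$). Next I would note that, in the notation of Theorem \ref{thm-noncompact}, the map $\mathcal{F}_U$ appearing in the proposition is exactly the harmonic map $\mathcal{F}_{\tilde{U}}$ into the compact dual $\tilde{U}/(\tilde{U} \cap \tilde{K}^{\C})$ obtained from the Iwasawa splitting $F = F_{\tilde{U}} S_+$. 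With these identifications, statement (1) reads literally ``$\mathcal{F}$ satisfies (U1)'' and statement (2) reads literally ``$\mathcal{F}_{\tilde{U}}$ satisfies (U1)''.

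Having set up this dictionary, the equivalence $(1) \Leftrightarrow (2)$ is exactly the assertion of Corollary \ref{cor-finite}(1), namely that $\mathcal{F}$ satisfies (U1) if and only if $\mathcal{F}_{\tilde{U}}$ satisfies (U1). Thus the proof consists of citing that corollary, and no further argument is required.

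If one prefers to exhibit the mechanism directly rather than quoting the corollary, I would trace the monodromy through the Iwasawa decomposition. On the universal cover one has $F(g.z,\lambda) = \chi(g,\lambda) F(z,\lambda) K(g,z)$ with $\chi(g,\lambda) \in \Lambda \tilde{G}_{\sigma}$ and $K(g,z) \in \tilde{K}$. Substituting $F = F_{\tilde{U}} S_+$ and absorbing the positive factor into $\tilde{U} \cap \tilde{K}^{\C}$, one reads off that the monodromy of $F_{\tilde{U}}$ is again $\chi(g,\lambda)$; the crucial point is the identity $\Lambda \tilde{G}^{\C}_{\sigma} = \Lambda \tilde{U}^{\C}_{\sigma}$ from Theorem \ref{thm-noncompact}, which lets the same loop $\chi$ serve simultaneously as the $\tilde{G}$-monodromy of $F$ and as the $\tilde{U}$-monodromy of $F_{\tilde{U}}$. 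Consequently $\chi \equiv e$ (after the appropriate normalization at the base point) for one frame exactly when it holds for the other.

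The main (and essentially only) obstacle is bookkeeping: ensuring that the normalizations of $F$ and $F_{\tilde{U}}$ at the base point are compatible, so that ``trivial monodromy'' is read with the same normalization for both maps, and verifying that the transition factor $u(g,z) = S_+(z,\lambda) K(g,z) S_+(g.z,\lambda)^{-1}$ indeed lands in $\tilde{U} \cap \tilde{K}^{\C}$. Since both of these are already dispatched in the proof of Corollary \ref{cor-finite}, once that corollary is in hand the present proposition follows immediately.
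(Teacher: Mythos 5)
Your proposal is correct and matches the paper exactly: the paper states that Proposition \ref{prop-trivial-monod} follows immediately from Corollary \ref{cor-finite}, since ``trivial monodromy'' is by Definition \ref{def-uni} just condition (U1), and Corollary \ref{cor-finite}(1) is precisely the equivalence of (U1) for $\mathcal{F}$ and $\mathcal{F}_{\tilde{U}}$. Your optional direct argument via $F = F_{\tilde{U}} S_+$ and the transition factor $u(g,z) = S_+(z,\lambda) K(g,z) S_+(g.z,\lambda)^{-1}$ is also exactly the mechanism used in the paper's proof of that corollary.
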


\begin{corollary}\label{cor-finite-dress}
If $\mathcal{F}:M\rightarrow G/K$ is a harmonic map of finite uniton type and $h_+\in\Lambda^+G^{\C}_{\sigma}$.
Then the dressed harmonic map $h_+\sharp\mathcal{F}_{\lambda}$ is of finite uniton type on $M$.
\end{corollary}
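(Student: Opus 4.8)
The plan is to verify the two defining properties (U1) and (U2) of Definition \ref{def-uni} for the dressed map $\hat{\mathcal F}=h_+\sharp\mathcal F_\lambda$ separately, working over the universal cover $\tilde M$ and then descending to $M$. Property (U1) is essentially already in hand: since $\mathcal F$ is of finite uniton type it has trivial monodromy, and Theorem \ref{thm-dress} asserts precisely that each dressed map $h_+\sharp\mathcal F_\lambda$, $h_+\in\Lambda^+G^{\C}_\sigma$, again has trivial monodromy modulo $K$; so (U1) for $\hat{\mathcal F}$ on $M$ is immediate.

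The substantive point is (U2), and the key is to describe how dressing acts on the meromorphic extended frame. Let $F$ be the normalized extended frame of $\mathcal F$ and $F=F_-F_+$ its Birkhoff decomposition, with $F_-$ the meromorphic extended frame and $F_+\in\Lambda^+G^{\C}_\sigma$. Writing the dressing Iwasawa splitting on the open cell as $h_+F=\hat F\,\hat W_+$ and Birkhoff-decomposing $h_+F_-=\hat F_-\tilde h_+$ with $\hat F_-\in\Lambda_*^-G^{\C}_\sigma$ and $\tilde h_+\in\Lambda^+G^{\C}_\sigma$, one finds $\hat F=\hat F_-\bigl(\tilde h_+F_+\hat W_+^{-1}\bigr)$ with the bracketed factor in $\Lambda^+G^{\C}_\sigma$; by uniqueness of the Birkhoff decomposition this exhibits $\hat F_-$ as the meromorphic extended frame of $\hat{\mathcal F}$, so that dressing acts by $F_-\mapsto(h_+F_-)_-$, and $\hat F_-(z_0,\lambda)=e$ because $h_+F_-(z_0,\lambda)=h_+\in\Lambda^+G^{\C}_\sigma$ has trivial $\Lambda_*^-$ factor. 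Now comes the degree count. By Proposition \ref{prop-fut}(3) finite uniton type of $\mathcal F$ makes $F_-$ a Laurent polynomial, and as $F_-\in\Lambda_*^-G^{\C}_\sigma$ its $\lambda$-expansion is supported in $\lambda^{-N},\dots,\lambda^0$ for some $N$. Both $h_+$ and $\tilde h_+^{-1}$ lie in $\Lambda^+G^{\C}_\sigma$ and hence involve only non-negative powers of $\lambda$ (crucially $h_+$ itself need not be algebraic); thus every power occurring in $\hat F_-=h_+F_-\tilde h_+^{-1}$ is at least $-N$, while $\hat F_-\in\Lambda_*^-G^{\C}_\sigma$ forces all occurring powers to be at most $0$. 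Therefore $\hat F_-$ is supported in $\lambda^{-N},\dots,\lambda^0$, again a Laurent polynomial (with uniton number no larger than that of $\mathcal F$), and by Proposition \ref{prop-fut} (the implication $(3)\Rightarrow(2)$) the frame $\hat F$ on $\tilde M$ is a Laurent polynomial, i.e. (U2). Together with (U1) and Definition \ref{def-uni}, $\hat{\mathcal F}$ is of finite uniton type on $M$.

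The hard part will be the identification used above, namely that dressing acts on the meromorphic extended frame by the Birkhoff projection $F_-\mapsto(h_+F_-)_-$. This requires reconciling the Iwasawa-defined dressing of Theorem \ref{thm-dress} with the Birkhoff picture: one must handle the representatives $\delta$ and the connected-component issues in the Iwasawa decomposition of Theorem \ref{thm-decomposition}(i), the fact that $\hat{\mathcal F}$ is a priori defined only on a smaller simply connected subdomain and away from a discrete set where the relevant big cell is left, and---when $\tilde M=S^2$---frames carrying up to two singularities. Once this is established the degree count is routine, and a left normalization of $\hat F$ at $z_0$ by an element of $\Lambda G_\sigma$, harmless by Lemma \ref{Le-def-1}, produces the required frame with $\hat F(z_0,\lambda)=e$.
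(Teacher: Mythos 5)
Your proof is correct, and (U1) is handled exactly as in the paper, via Theorem \ref{thm-dress}. For (U2), however, you take a genuinely different and noticeably heavier route. The paper argues directly on the Iwasawa-dressed frame: from $h_+F=\hat F\hat W_+$ one writes $\hat F=h_+F\hat W_+^{-1}$, observes that $F$ is a Laurent polynomial while $h_+$ and $\hat W_+^{-1}$ lie in $\Lambda^+G^{\C}_{\sigma}$, so $\hat F$ contains only finitely many negative powers of $\lambda$, and then invokes the reality of $\hat F$ (it takes values in $\Lambda G_{\sigma}$) to cap the positive powers as well; hence $\hat F$ is a Laurent polynomial and Proposition \ref{prop-fut} applies. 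You instead pass to the meromorphic extended frame, establish that dressing acts by the Birkhoff projection $F_-\mapsto(h_+F_-)_-$, and replace the reality argument by the triangularity constraint $\hat F_-\in\Lambda_*^-G^{\C}_{\sigma}$, finishing with the implication $(3)\Rightarrow(2)$ of Proposition \ref{prop-fut}. Both arguments rest on the same counting principle --- multiplication on either side by elements of $\Lambda^+G^{\C}_{\sigma}$ cannot lower the minimal occurring power of $\lambda$ --- but your version hinges on the identification of the dressed meromorphic frame, which is precisely the step you flag as ``the hard part'' with its big-cell, representative and $S^2$-singularity caveats; the paper's one-line argument sidesteps all of that by never leaving the Iwasawa picture. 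In exchange, your route buys two things the paper's proof does not state: the explicit action of dressing on normalized frames/potentials, and the observation that the uniton number does not increase under dressing.
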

\begin{proof} Condition (U1) for $h_+\sharp\mathcal{F}_{\lambda}$  follows from Theorem \ref{thm-dress}.
It is easy to verify that
$h_+F=\hat{F}\hat{W}_+$ implies that $\hat{F}$ only contains finitely many negative powers of $\lambda$ if $F$ does.
But $\hat{F}$ is real and the claim follows.  Also note that if $ {F(z_0,\bar{z}_0,\lambda) = e}$ then one can assume
without loss of generality that also $\hat{F}(z_0,\bar{z}_0,\lambda) =  {e}$ holds.
\end{proof}


\section{  {The work of Burstall and Guest} and the  DPW formalism}

In this section, we will recall the work of Burstall and Guest on harmonic maps of finite uniton  {number} needed in this paper and then translate it into the language of  { the DPW method} \cite{DPW}. For more details on Burstall and Guest's work, we refer to  \cite{BuGu} and \cite{Gu2002}. Note that, in their work, $G$ is assumed to be a connected, compact, semi-simple real Lie group with trivial center,  {as it is in this section, }
and  $\mathfrak{g}$  denotes its Lie algebra and $G^{\mathbb{C}}$ its complexification.
 {We can assume w.l.g. that $G^\C$ is a semisimple simply-connected matrix Lie group and $G$ a subgroup of $G^\C$ \cite{Hoch}.}  {Note also that \cite{BuGu} only considers inner symmetric spaces, as does this paper.}

\subsection{Review of extended solutions}
 In this subsection, we  compare/unify the notation used in \cite{Uh}, \cite{BuGu} and \cite{DPW}.
For a harmonic map  $\mathbb{F}:\D\rightarrow G$, in \cite{Uh},  \cite{BuGu} ``extended solutions" are considered, while in \cite{DPW} always ``extended frames'' are used.  In this subsection we will explain the relation between these methods. Here we include primarily  the details which we will need to use.

For the convenience of the reader and to fix notation we start with a simple remark.

\subsubsection{Inner symmetric spaces and the modified Cartan embedding}

Consider the inner compact symmetric space  $G/\hat{K}$  with inner involution $\sigma$, given by $\sigma(g) = h g h^{-1}$ and with $\hat{K} = Fix^\sigma (G)$. Note that $h^2 \in Center (G) = \{e\}. $  Then with $R_h(g)= gh$ we consider
the map
\[
\begin{tikzcd}[column sep=6mm,row sep=4mm]
G/\hat{K}  \ar{r}{\mathfrak{C}}    &   G     \ar{r}{R_h} & G \\
g   \ar{r}{}  &   g \sigma(g)^{-1} = ghg^{-1} h^{-1}  \ar{r}{} & \mathfrak{C}_h:=g h g^{-1}.
\end{tikzcd}
\]
 In this way $G/\hat K$ is an isometric, totally geodesic submanifold   of $G$ \cite{BuGu}, and ${\mathfrak{C}_h}$ will be called the ``modified Cartan embedding". Note that for outer symmetric spaces the above Cartan embedding does not apply directly.

\subsubsection{Extended frames for harmonic maps  $\mathcal{F} : M \rightarrow G/\hat{K}$ and modified  harmonic maps
$\mathfrak{C}_h\circ \mathcal{F}$}
Using the notation introduced above, consider a harmonic map  $\mathcal{F}: M \rightarrow G/\hat{K}$.

By $F : \tilde{M} \rightarrow  \Lambda G_{\sigma}$ we  denote the extended frame of $\mathcal{F}$ which is normalized to $F(z_0, \bar{z}_0, \lambda) = e$ at some base point $z = z_0$ for all $\lambda \in S^1$.
The extended frame of a harmonic map $\mathcal{F}$ actually is for each fixed $\lambda$ the frame of the corresponding immersion $\mathcal{F}_\lambda$ of the
associated family of $\mathcal{F}$. Obviously, the twisting condition in our case means
\[{\sigma(\gamma)(\lambda) = h \gamma(-\lambda)h^{-1} = \gamma(\lambda)}\hbox{
for all
$\gamma \in \Lambda G_{\sigma}$.}\]

Next we consider the composition of the family of harmonic maps  $\mathcal{F}_\lambda$ with the
modified Cartan embedding $\mathfrak{C}_h$.  In our setting, since
$\mathcal{F}_\lambda =F( z, \bar{z}, \lambda)\mod \hat{K}$,   this yields the
$\lambda-$dependent harmonic map $\mathfrak{F}^h_\lambda$ given by
 \begin{equation} \label{harmonicrelation}
  \mathfrak{F}^h_{\lambda}=
 F( z, \bar{z}, \lambda) h F( z, \bar{z}, \lambda))^{-1}.
 \end{equation}
Note that $\mathfrak{F}^h_\lambda $  is a $\lambda-$dependent harmonic map satisfying
$(\mathfrak{F}^h_\lambda )^2=e, $ where the square denotes the product in the group $G$.
Moreover, we also  have     $\mathfrak{F}^h_\lambda (z_0, \bar{z}_0,\lambda)  = h.$ Harmonic maps into $G$ satisfying these two properties will be called ``modified harmonic maps".

\begin{theorem}
We retain the notation and the assumptions made just above. In particular, $z_0$ is a fixed basepoint in the Riemann surface $M$ Then
there is a bijection between harmonic maps  $\mathcal{F} : M \rightarrow G/\hat{K}$  satisfying
$\mathcal{F}(z_0,\bar{z}_0,\lambda) = e\hat{K}$ and modified harmonic maps, i.e. harmonic maps
  $\mathfrak{F}^h_{\lambda}: M \rightarrow G_h = \{ ghg^{-1}; g \in G \}$ satisfying
   $(\mathfrak{F}^h_{\lambda})^2 = e$ and  $\mathfrak{F}^h_{\lambda}(z_0, \bar{z}_0, \lambda) = h$ for all   $\lambda \in \C^*$.
  This relation is given by composition with the modified  Cartan embedding (and its inverse respectively).
  \end{theorem}

\begin{proof}
We have shown ``$\Longrightarrow$" above. Assume now we have  a harmonic map
$\mathbb{F}: M \rightarrow  {{G}_h}$ satisfying
 $\mathbb{F}^2 = e$ and  $\mathbb{F}(z_0, \bar{z}_0, \lambda) = h$ for all  $\lambda \in \C^*$.
   Since   $\mathfrak{C}_h$ is an isometric  diffeomorphism   onto its image, we consider
    (for all $\lambda \in \C^*$) the $\lambda$-dependent harmonic map
    $\mathcal{F}_\lambda = (\mathfrak{C}_h)^{-1} \circ \mathbb{F}_\lambda : M \rightarrow G/\hat{K}$.
    Clearly, then we have  $\mathcal{F}(z_0, \bar z_0, \lambda) = e\hat{K}$.  Moreover, by what was shown above, we now infer
    $\mathbb{F}(z, \bar z, \lambda) = \mathfrak{F}^h_{\lambda} (z, \bar z, \lambda) =
    F( z, \bar{z}, \lambda) h F( z, \bar{z}, \lambda))^{-1}$, where $F$ denotes the extended frame of $\mathcal{F}$.
    Since     $\mathcal{F} (z_0, \bar{z}_0, \lambda) = e$,
    $h =   F( z_0 \bar{z}_0, \lambda) h F( z_0, \bar{z}_0, \lambda))^{-1}= \mathfrak{F}^h_{\lambda} ( z_0, \bar{z}_0, \lambda)$ holds.
    \end{proof}


\subsubsection{Extended solutions for harmonic maps into Lie groups}

We  have shown, among other things, in the  subsubsections above  that it is essentially sufficient
for our purposes to  consider  harmonic maps into Lie groups $G$.
In this subsubsection we consider harmonic maps into Lie groups following the approach of
\cite{Uh} and \cite{BuGu}.
 We start by relating the different loop parameters
used in \cite{Uh} , \cite{BuGu} and \cite{DPW} respectively to each other.

 To begin with, we  recall the definition of {\em extended solutions}
following Uhlenbeck \cite{Uh},\cite{BuGu}. Let  $\D \subset \C$ be a
simply-connected domain and $\mathbb{F}: \D\ \rightarrow G$  a harmonic map.
Set
\[\mathbb{A}=\frac{1}{2} \mathbb{F}^{-1}\mathrm{d} \mathbb{F} =\mathbb{A}^{(1,0)}+\mathbb{A}^{(0,1)}.\]
Consider {for $\tilde{\lambda} \in \C^*$} the equations
\begin{equation}\label{eq-Uh1}
\left\{
\begin{split}
\partial_z \Phi \mathrm{d}z&=(1-\tilde{\lambda}^{-1})\Phi\mathbb{A}^{(1,0)},\\
\partial_{\bar{z}} \Phi \mathrm{d} \bar{z}&=(1-\tilde{\lambda})\Phi\mathbb{A}^{(0,1)}.\\
\end{split}
\right.
\end{equation}
with $\Phi: \D \rightarrow \Omega G$, where the corresponding loop parameter
 is denoted here by $\tilde{\lambda}.$
 Then, by Theorem 2.2 of \cite{Uh} (Theorem 1.1 of \cite{BuGu}),
there exists a solution $\Phi(z,\bar{z},\tilde{\lambda})$ to the above
equations such that
\begin{equation}
\Phi(z,\bar{z},\tilde{\lambda}=1)=e,~\ \mbox{and}
~ \Phi(z,\bar{z},\tilde{\lambda}=-1)= \mathbb{F} (z, \bar z)
\end{equation}
hold. This solution is unique up to multiplication by some
$\gamma \in \Omega G = \{ g \in \Lambda G^{\C}_\sigma |,  g(\lambda = 1) = I \}$ satisfying $\gamma (-1) = e$. Such solutions $\Phi$ are said to be {\bf extended solutions}.

   If we also have $\mathbb{F}(z_0)=e$ , we can also choose $\Phi(z_0,\bar{z}_0, \tilde\lambda)=e$.
Although the assumption  $\mathbb{F}(z_0)=e$ was
used in \cite{Uh}, we will, as in \cite{BuGu}, not assume  this, since it is not satisfied
 in a large part of this section. The following statement  is straightforward.

\begin{lemma}\label{lemma-es} \cite{BuGu}  Let $\Phi(z,\bar{z}, \tilde{\lambda})$ be
 {\em an
extended solution} of the harmonic map  $\mathbb{F}:\D\rightarrow G$. Let $\gamma\in \Omega G$.
Then $\gamma(\lambda)\Phi(z,\bar{z},\tilde{\lambda})$ is an extended solution of the
harmonic map $\gamma(-1)\mathbb{F}(z,\bar{z})$.
\end{lemma}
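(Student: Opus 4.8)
The plan is to use that $\gamma$ depends only on the loop parameter and not on the surface coordinates $z,\bar z$, so that left multiplication by $\gamma$ does not alter the Maurer--Cartan form of $\Phi$. First I would rewrite the defining equations \eqref{eq-Uh1} as a single statement about the logarithmic derivative: setting $\omega_{\tilde\lambda}=\frac12(1-\tilde\lambda^{-1})\mathbb{A}^{(1,0)}+\frac12(1-\tilde\lambda)\mathbb{A}^{(0,1)}$ with $\mathbb{A}=\frac12\mathbb{F}^{-1}d\mathbb{F}$, the two equations say exactly that $\Phi^{-1}d\Phi=\omega_{\tilde\lambda}$.

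The key step is then a one-line computation. Writing $\Psi(z,\bar z,\tilde\lambda)=\gamma(\tilde\lambda)\Phi(z,\bar z,\tilde\lambda)$ (identifying the loop parameter of $\gamma\in\Omega G$ with $\tilde\lambda$) and using that $\gamma$ is independent of $z$ and $\bar z$, so that its exterior derivative in the surface variables vanishes, one obtains $\Psi^{-1}d\Psi=\Phi^{-1}\gamma^{-1}(\gamma\,d\Phi)=\Phi^{-1}d\Phi=\omega_{\tilde\lambda}$. Hence $\Psi$ solves the very same pair of equations \eqref{eq-Uh1}, with the same $\mathbb{A}^{(1,0)}$ and $\mathbb{A}^{(0,1)}$. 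This is the heart of the matter, and it is genuinely immediate once the coordinate-independence of $\gamma$ is observed; there is no real analytic obstacle to overcome here.

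It remains to identify the underlying harmonic map and to check the boundary normalizations at $\tilde\lambda=\pm1$. At $\tilde\lambda=1$ we have $\Psi(\cdot,1)=\gamma(1)\Phi(\cdot,1)=\gamma(1)=e$, since $\gamma\in\Omega G$ is based; at $\tilde\lambda=-1$ we get $\Psi(\cdot,-1)=\gamma(-1)\Phi(\cdot,-1)=\gamma(-1)\mathbb{F}$. Finally, since $\gamma(-1)$ is a fixed element of $G$, left translation by it is an isometry, so $(\gamma(-1)\mathbb{F})^{-1}d(\gamma(-1)\mathbb{F})=\mathbb{F}^{-1}d\mathbb{F}$; thus $\gamma(-1)\mathbb{F}$ is again harmonic and its associated one-form $\mathbb{A}$ coincides with the one above. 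Combining these facts shows that $\Psi=\gamma\Phi$ is an extended solution of $\gamma(-1)\mathbb{F}$. The one point I would flag — the only genuine subtlety, conceptual rather than technical — is that we are using the \emph{unnormalized} notion of extended solution: since $\Psi(z_*,\bar z_*,\tilde\lambda)=\gamma(\tilde\lambda)$ need not equal $e$, the loop $\gamma\Phi$ is in general an extended solution but not the normalized one unless $\gamma\equiv e$.
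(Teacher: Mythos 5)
Your proof is correct and is exactly the intended argument: the paper itself offers no proof of this lemma, introducing it with ``It is straightforward to see that'' and a citation to \cite{BuGu}, and the straightforward verification is precisely your computation that left multiplication by the $z$-independent loop $\gamma$ preserves $\Phi^{-1}d\Phi$, that $\gamma(1)=e$ keeps the normalization at $\tilde\lambda=1$, and that $\gamma(-1)\mathbb{F}$ has the same one-form $\mathbb{A}$ so the defining equations are unchanged. Your closing remark about losing the base-point normalization is also consistent with the paper's explicit distinction between ``extended solution'' and ``normalized extended solution.''
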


\begin{remark}
	Next we show how the ``DPW approach" without the basepoint assumption  \cite{DPW} naturally leads to
	Uhlenbeck's extended solutions \cite{Uh}.
	We follow Section 9 of \cite{Do-Es} and consider the Lie group $G$  as the outer symmetric space
	$G = (G \times G)/ \Delta,$
where the defining symmetry $\tilde{\sigma}$ is given by
	$$\tilde{\sigma} (a,b) = ( b,a)$$ and we have  $$\Delta  = \{ (a,b) \in  G \times G; a=b \}.$$
	
	For the purposes of our loop group method it is necessary to consider the $G \times G-$loop group
	$\Lambda(G \times G)$
	twisted by $\tilde{\sigma}$. We thus consider the automorphism of the loop group
	$\Lambda(G \times G) = \Lambda G \times \Lambda G$ given by
	$$\hat{\tilde{\sigma}} ((a,b)) (\lambda) = \tilde{\sigma}( a(\lambda), b(\lambda)) =
	(b(\lambda), a(\lambda)).$$

It is straightforward to verify that the twisted loop group $\Lambda(G \times G)_{\tilde{\sigma}}$
	is given by
	$$\Lambda(G \times G)_{\tilde{\sigma}} = \{ (g(-\lambda), g(\lambda)) ; g(\lambda) \in \Lambda G \}
	 \cong \Lambda G. $$

	Let's consider now a harmonic map $\mathbb{F}: M \rightarrow G.$
	Then the map
	$\mathfrak{F}:M \rightarrow G \times G$, given by
	 $\mathfrak{F}(z, \bar{z}) = \left(\mathbb F(z,\bar{z}),e\right)$,  is a global frame of $\mathbb{F}$.
	
	Following \cite{DPW} one needs to decompose the Maurer-Cartan form
	$\mathfrak{A} = \mathfrak{F}^{-1} \mathrm{d} \mathfrak{F}$ of $\mathfrak{F}$  into the eigenspaces of
	$\tilde\sigma$ and to introduce the loop parameter $\lambda$.
	One obtains (see \cite{Do-Es}, formula $(68)$):
	\begin{equation}
	\mathfrak{A}_{\lambda} = \left( \left(1+\lambda^{-1}\right) \mathbb A^{(1,0)} +  \left(1+\lambda\right)\mathbb A^{(0,1)} ,\ \left(1-\lambda^{-1}\right) \mathbb A^{(1,0)}  +  \left(1-\lambda\right) \mathbb A^{(0,1)} \right).
	\end{equation}
\end{remark}
	\begin{theorem}
		Let  $G$  be a connected, compact or non-compact, semi-simple real Lie group with trivial center.
		Let  $\mathbb{F}:\D\rightarrow G$ be a harmonic map.  Then, when  representing $G$ as the
		symmetric space  $G = (G \times G)/\Delta,$
		any extended  frame $\mathfrak{F} : \D \rightarrow \Lambda(G \times G)_{\tilde\sigma}$
		of $\mathbb{F}$ satisfying
		$\mathfrak{F}(z, \bar{z}, \lambda = 1) = \left(\mathbb F(z,\bar{z}),e\right)$
		is given  by a pair of functions,
		\[\mathfrak{F}(z, \bar{z},\lambda) =  ( \Phi (z, \bar{z},-\lambda), \Phi (z, \bar{z},\lambda)),\]
		where the  matrix function $ \Phi (z, \bar{z},\lambda)$ is an  extended solution  for $\mathbb{F}$ in the sense of Uhlenbeck \cite{Uh} as introduced above.
	\end{theorem}
	
	\begin{proof}
		Since the two components of $\mathfrak{A} $ only differ by a minus sign in $\lambda$, any solution to
		the equation $\mathfrak{A}_\lambda  = \mathfrak{F}(z, \bar z, \lambda)^{-1} \mathrm{d} \mathfrak{F}(z, \bar z, \lambda)$
		is of the form $ \mathfrak{F}(z, \bar z, \lambda) =
		(B(-\lambda) \Psi(z, \bar z, - \lambda) , B(\lambda) \Psi(z, \bar z,  \lambda))$,
		where $\Psi$ solves the equations \eqref{eq-Uh1}. Moreover, we can assume w.l.g. that $\Psi$ satisfies the two conditions for extended solutions stated above for $\lambda = \pm1$.
		Now
		$\mathfrak{F}(z, \bar{z}, \lambda = 1) = \left(\mathbb F(z,\bar{z}),e\right)$ implies
		{$B(1)  = B(1) \Psi (z, \bar{z},\lambda= 1) =e$ and $B( -1)\Psi(z, \bar z, -1) =
		 B(-1) \mathbb{F}(z, \bar z) = \mathbb{F}(z, \bar z),$
		 whence $B(-1) = e$ follows.
		 Therefore, $\Phi (z, \bar z, \lambda)  = B(\lambda)  \Psi(z, \bar z, \lambda)$}
		yields the claim.
	\end{proof}

	Note that in this theorem no normalization is required. Moreover, the loop parameter used in \cite{Uh} is the same as the one used in \cite{DPW}. However, the matrix functions $B(\lambda)$ and  $\Phi (z, \bar z, \lambda)$
	are not uniquely determined which causes the DPW procedure to yield quite arbitrary potentials, not easily permitting any converse construction procedure.

\subsubsection{Extended solutions and extended frames for harmonic maps into symmetric spaces} \label{414}

Consider as before a harmonic map  $\mathcal{F}: M \rightarrow G/\hat{K}$ into a symmetric space
with inner involution $\sigma$, given by $\sigma(g) = h g h^{-1}$ and with $\hat{K} = Fix^\sigma (G)$.
 As above we consider the modified harmonic map $\mathbb{F}: \rightarrow G$ given by
 $$\mathbb{F}(z, \bar z, \lambda) =   \mathfrak{F}^h_{\lambda} (z, \bar z, \lambda) =
 F( z, \bar{z}, \lambda) h F( z, \bar{z}, \lambda))^{-1}.$$
  For this $\lambda-$family of harmonic maps $\mathbb{F}_\lambda$ we compute
 \begin{equation*}
 \begin{split}
\mathbb{A}&=\frac{1}{2}\mathbb{F}_{\lambda}^{-1}\mathrm{d}\mathbb{F}_{\lambda} \\
\ &=\frac{1}{2}\left(F(z,\bar{z},\lambda)hF(z,\bar{z},\lambda)^{-1}\right)^{-1}\mathrm{d}\left(F(z,\bar{z},\lambda)hF(z,\bar{z},\lambda)^{-1}\right)\\
\ &=\frac{1}{2}F(z,\bar{z},\lambda) h^{-1}\alpha_{\lambda} h F(z,\bar{z},\lambda)^{-1}-\frac{1}{2}\mathrm{d}F(z,\bar{z},\lambda)F(z,\bar{z},\lambda)^{-1}\\
\ &=\frac{1}{2}F(z,\bar{z},\lambda)\left(\alpha_{-\lambda}-\alpha_{\lambda}\right)F(z,\bar{z},\lambda)^{-1}\\
\ &=-F(z,\bar{z},\lambda)\left(\lambda^{-1}\alpha_{\mathfrak{p}}'+\lambda\alpha_{\mathfrak{p}}''\right)F(z,\bar{z},\lambda)^{-1}.\\
\end{split}
\end{equation*}
Following Uhlenbeck's approach we need to introduce a new ``loop parameter" $\tilde{\lambda}$ now and consider Uhlenbeck's differential equation \eqref{eq-Uh1} for
$\Phi(z,\bar{z},\lambda,\tilde{\lambda})$ with conditions for $\tilde{\lambda} = \pm 1$:
\begin{equation}\label{eq-Uh2}
\left\{
\begin{split}
\partial_z \Phi \mathrm{d}z&=-\Phi(1-\tilde{\lambda}^{-1})\lambda^{-1}F(z,\bar{z},\lambda) \alpha_{\mathfrak{p}}' F(z,\bar{z},\lambda)^{-1}\\
\partial_{\bar{z}} \Phi\mathrm{d}\bar z&=-\Phi(1-\tilde{\lambda})\lambda F(z,\bar{z},\lambda) \alpha_{\mathfrak{p}}'' F(z,\bar{z},\lambda)^{-1}\\
\end{split}
\right.\end{equation}
From (\ref{eq-Uh2}) it is natural to consider the Maurer-Cartan form $\widetilde{\mathbb{A}} $ of $\Phi (z,\bar{z},\lambda,\tilde{\lambda})F(z,\bar{z},\lambda)$. One obtains:
\begin{equation}
\begin{split}
\widetilde{\mathbb{A}}&=(\Phi F)^{-1} \dd ( \Phi F)\\
&=F^{-1}\dd F+F^{-1}(\Phi^{-1}\dd  \Phi) F\\
&=\alpha_{\lambda}-F^{-1}\Phi^{-1}\left(\Phi(1-\tilde{\lambda}^{-1})\lambda^{-1}F  \alpha_{\mathfrak{p}}' F ^{-1}+\Phi(1-\tilde{\lambda})\lambda F \alpha_{\mathfrak{p}}'' F ^{-1}\right)F\\
&=\alpha_{\lambda}- \left( (1-\tilde{\lambda}^{-1})\lambda^{-1}  \alpha_{\mathfrak{p}}'  +(1-\tilde{\lambda})\lambda \alpha_{\mathfrak{p}}''\right)\\
&=\lambda^{-1}  \alpha_{\mathfrak{p}}'+\alpha_{\mathfrak k}+\lambda \alpha_{\mathfrak{p}}''- \left( (1-\tilde{\lambda}^{-1})\lambda^{-1}  \alpha_{\mathfrak{p}}'  +(1-\tilde{\lambda})\lambda \alpha_{\mathfrak{p}}''\right)\\
&=\tilde{\lambda}^{-1}\lambda^{-1}  \alpha_{\mathfrak{p}}'+\alpha_{\mathfrak k}+\tilde{\lambda}\lambda \alpha_{\mathfrak{p}}'' \\
&=\alpha_{\tilde{\lambda}\lambda}. \\
\end{split}
\end{equation}
From this we derive immediately the relation
\begin{equation}\label{eq-lawson}
F(z,\bar{z}, \lambda\tilde{\lambda})=A(\lambda, \tilde{\lambda}) \Phi (z,\bar{z},\lambda,\tilde{\lambda})F(z,\bar{z}, \lambda).
\end{equation}
Substituting here  $z = z_0$ we derive, in view of the normalization of $F$ at $z = z_0$:
\begin{equation}\label{eq-A}
A(\lambda, \tilde{\lambda}) = \Phi (z_0 ,{\bar{z}}_0,\lambda,\tilde{\lambda})^{-1}.
\end{equation}
In particular, setting $\lambda=1$ in \eqref{eq-lawson} we obtain
\begin{equation}
F(z,\bar{z}, \tilde{\lambda})=A(1, \tilde{\lambda}) \Phi (z,\bar{z},1,\tilde{\lambda})F(z,\bar{z},1).
\end{equation}
Setting $\tilde\lambda=-1$ in \eqref{eq-lawson} we obtain (by using the twisting condition for $F$):
\begin{equation}
A(\lambda, -1)=F(z,\bar{z},-\lambda)\left( \Phi (z,\bar{z},\lambda,-1)F(z,\bar{z},\lambda)\right)^{-1}=\violet{h}.
\end{equation}
Hence
\[\Phi (z, \bar{z},1,-1)={A(1,-1)^{-1} } F(z,\bar{z}, -1)F(z,\bar{z}, 1)^{-1}=F(z,\bar{z}, 1)hF(z,\bar{z}, 1)^{-1}=\mathbb{F}(z,\bar z,1).\]

In summary we obtain (by setting $\lambda = 1$ and replacing $\tilde{\lambda}$ by $\lambda$):
\begin{corollary}\label{cor-Phi-F}
The extended solution $\Phi$, and the $\sigma-$twisted  extended frame $F$ satisfy
\begin{equation}\label{eq-lawson2}
\Phi (z, \bar{z},1,\lambda) = A(1, \lambda)^{-1}F(z,\bar{z},  \lambda)F(z,\bar{z}, 1)^{-1}.
\end{equation}
In particular, $\Phi  (z, \bar{z},1,\lambda)$ is contained in the based loop group $\Omega G$. Moreover,
for $\lambda = -1$  we obtain the harmonic map $ \mathbb{F}(z,\bar z,1) =  \mathbb{F}(z,\bar z) $.
\end{corollary}

\subsection{Finite  {uniton type   \`{a}}
 la Burstall-Guest for harmonic maps into compact Lie groups}

 Let us recall that in Definition \ref{def-uni} we have given the definition of harmonic maps of finite uniton type into Lie groups. Now we want to define the notion of a ``finite uniton number''.
  {It has been introduced by Uhlenbeck \cite{Uh} for $U(n)$ and by Burstall-Guest
 \cite{BuGu} for a general compact real Lie group $G$.}

\begin{definition}  \label{def-f.u.}
Let $\mathbb{F}:M \rightarrow G$  be a harmonic map into a real Lie group $G$.
Assume there exists a global extended
solution $\Phi(z,
\bar{z},\tilde\lambda):M\rightarrow \Lambda G^{\mathbb{C}}$ (i.e., $\mathbb{F}$ has trivial monodromy).
We say that $\mathbb{F}$ has {\it finite uniton number $k$} if
(see \eqref{eq-alg-loop} for the definition of $ \Omega^k_{alg} G $)
 \begin{equation}\Phi(M)\subset \Omega^k_{alg} G ,\
\hbox{ and } \Phi(M)\nsubseteq \Omega^{k-1}_{alg} G .\end{equation}
 In this case we write  $r(\Phi)=k$ and the minimal uniton number of $\mathbb{F}$ is defined
as \[r(\mathbb{F}):=min\{r(\gamma  Ad(\Phi))| \gamma\in \Omega_{alg} Ad G \}.\]
\end{definition}

 {
The notion of finite uniton number harmonic maps is related to extended solutions. In this paper we usually use extended framings of harmonic maps and read off the notion of ``finite uniton type" from these extended frames. It is important to this paper that these two notions describe the same class of harmonic maps.}

\begin{proposition} \label{typeequivnumber}
$\mathcal F$ is a harmonic map of finite uniton type in $G/K$ if and only if
 {$\mathbb{F} = \mathfrak{C} \circ \mathcal{F} \cdot h = \mathfrak{C}_h \circ \mathcal{F}$  given in  section 4.1.1, where $\mathfrak{C}_h$ is the modified  Cartan }embedding of $G/K$ into $G$, is a harmonic  map of finite uniton number.
\end{proposition}

\begin{proof}
$``\Rightarrow"$ By definition, $F(z,\bar z, \lambda\tilde\lambda)$ is a Laurent polynomial in $\lambda \tilde{\lambda}$ and hence also in $\tilde\lambda$. As a consequence of this and
\eqref{eq-lawson} we have that \[A(\lambda,\tilde\lambda)\Phi(z,\bar z, \lambda,\tilde\lambda)=F(z,\bar{z}, \lambda\tilde\lambda)F(z,\bar{z}, \lambda)^{-1}\]
is also a Laurent polynomial in $\tilde\lambda$.  Hence, by  the above definition
 $\mathbb{F}(z,\bar z,\lambda)$ is of finite uniton number.

 $``\Leftarrow"$  By definition, there exists some $\Phi(z,\bar z, \lambda,\tilde\lambda)$ being a Laurent polynomial in $\tilde\lambda$ and hence by \eqref{eq-A} $A(\lambda,\tilde\lambda)=\Phi (z_* ,{\bar{z}}_*,\lambda,\tilde{\lambda})^{-1}$ is also a Laurent polynomial in $\tilde\lambda$. As a consequence, by
\eqref{eq-lawson} we have that $F(z,\bar{z}, \lambda\tilde\lambda)$ is also a Laurent polynomial in $\tilde\lambda$ and hence also in $\lambda$, i.e., $\mathcal F$ is of finite uniton type.
 Consequently, $\mathbb{F}(z,\bar z,\lambda)$ is of finite uniton number.
  \end{proof}

\begin{remark}\
\begin{enumerate}
\item One of the main goals of this paper is a characterization of the normalized potentials
of all finite uniton type harmonic maps into $G/K$.
The potential for such a  harmonic map is the same as the potential
for the induced harmonic map into $G/\hat{K}$,
where $\hat{K} = G^{\sigma}$, since the different Cartan maps have the same images $\mathfrak{C}(gK)=\mathfrak{C}(g\hat{K})$ for all $g\in G$. We will therefore always assume in this section that
$\hat{K} = G^{\sigma}$. In this case the Cartan map $\mathfrak{C}$ actually is an embedding.

\item   {If $M$ is simply connected, then a global extended solution
$\Phi(z,\bar{z},\lambda):M\rightarrow \Omega G $ always exists, including the case $M=S^2$ (see Theorem 2.2 of \cite{Uh}, also see \cite{Segal}, Theorem 1.1 of \cite{BuGu})}.
This is in contrast to the case of extended
 frames, in which case we have explained above that on $S^2$ the extended frame needs to have (two)
 singularities due to the topology of $S^2$. In general, the extended solution may not exist globally on $M$ if $M$ is not simply connected.
\end{enumerate}
\end{remark}
\vspace{2mm}
Now let us turn to the  Burstall-Guest theory for harmonic maps into Lie groups of  {finite uniton number}.
 Let $\mathrm{T}\subset G $  be a maximal  {torus of $G$}  with $\mathfrak{t}$ the Lie algebra of $\mathrm{T}$.
 We can identify all the homomorphisms from $S^1$ to $\mathrm{T}$ with the integer lattice $\mathcal{I}:=(2\pi)^{-1}\exp^{-1}(e)\cap\mathfrak{t}$  in $\mathfrak{t}$ via the map
\begin{equation}
\begin{array}{llllll}
\mathcal{I}=(2\pi)^{-1}\exp^{-1}(e)\cap\mathfrak{t}&\longrightarrow \hbox{\{homomorpisms from $S^1$ to $\mathrm{T}$\}}, \\
\ \ \ \ \ \ \ \ \ \ \ \ \xi  \ \ \ &\longmapsto \gamma_{\xi},\\
\end{array}
\end{equation}
where  $\gamma_{\xi}:S^1\rightarrow T$ is defined by
\begin{equation}\gamma_{\xi}(\lambda):=\exp(t\xi),\ \ \hbox{ for all }\ \ \lambda=e^{it}\in S^1.
\end{equation}
Let  {$\mathcal {C}_0$} be a fundamental Weyl chamber of $\mathfrak{t}$. Set $\mathcal{I}'=\mathcal {C}_0\cap \mathcal{I}$. Then $\mathcal{I}'$ parameterizes the
conjugacy classes of homomorphisms $S^1\rightarrow G$.
Let $\Delta$ be the set of roots of $\mathfrak{g}^{\mathbb{C}}$. We have the root space decomposition
$\mathfrak{g}^{\mathbb{C}}=\mathfrak{t}^{\mathbb{C}}\oplus (\underset{\theta\in\Delta}{\oplus}\mathfrak{g}_{\theta} ).$  Decompose $\Delta$ as $\Delta=\Delta^-\cup\Delta^+$ according to  {$\mathcal {C}_0$}.
 Let $\theta_1,\cdots,\theta_l\in \Delta^{+}$ be the simple roots. We denote by $\xi_1,\cdots,\xi_l\in \mathfrak{t}$ the basis of $\mathfrak{t}$ which is dual to
$\theta_1,\cdots,\theta_l$ in the sense that $\theta_j(\xi_k)=\sqrt{-1}\delta_{jk}$.

\begin{definition} $($  {p.555 of \cite{BuGu}}$)$
An element $\xi$ in $\mathcal{I}'\backslash\{0\}$ is called a {\em canonical} element, if $\xi=\xi_{j_1}+\cdots+\xi_{j_k}$ with $\xi_{j_1},\cdots,\xi_{j_k}\in \{\xi_1,\cdots,\xi_l\}$ pairwise different. In other words, for every simple root $\theta_j$, we have that $\theta_{j}(\xi)$ only attains the values $0$ or $\sqrt{-1}$.\end{definition}

For $\theta\in\Delta$ and $X\in\mathfrak{g}_{\theta}$ we obtain
$$ad\xi X=\theta(\xi)X\ \ \hbox{ and }\ \theta(\xi)\in\sqrt{-1}\mathbb{Z}.$$
Let $\mathfrak{g}^{\xi}_{j}$ be the $\sqrt{-1}\cdot j-\hbox{eigenspace}$ of $\hbox{ad}\xi$. Then
 \begin{equation} \label{defgjxi}
\mathfrak{g}^{\xi}_j=\underset{\theta(\xi)=\sqrt{-1} j}{\oplus}\mathfrak{g}_{\theta}, \hbox{ and } \mathfrak{g}^{\mathbb{C}}=\underset{j}{\oplus}\ \mathfrak{g}^{\xi}_j.
\end{equation}
We define the {\em height} of $\xi$ as the non-negative integer \begin{equation}r(\xi)=\hbox{ max}\{j|\ \mathfrak{g}^{\xi}_j\neq0\ \}.\end{equation}

\begin{lemma}\label{lemma-xi} (Lemma 3.4 of \cite{BuGu})  Let $\xi=\sum_{i=1}^kn_{j_i}\xi_{j_i}\in \mathcal{I}'$ with $n_{j_i}>0$. Set $\xi_{can}=\sum_{i=1}^k\xi_{j_i}$. Then we have
 \begin{equation}
 \mathfrak{g}^{\xi}_0=\mathfrak{g}^{\xi_{can}}_0,\ \ \ ~~ \sum_{0\leq j\leq r(\xi)-1} \mathfrak{g}^{\xi}_{j+1}=\sum_{0\leq j\leq r(\xi_{can})-1}  \mathfrak{g}^{\xi_{can}}_{j+1}.
 \end{equation}
\end{lemma}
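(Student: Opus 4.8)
The plan is to reduce both identities to a single structural fact about the root system: every root $\theta\in\Delta$ is an integral combination $\theta=\sum_{m=1}^{l}c_m\theta_m$ of the simple roots in which all coefficients $c_m$ share a common sign (nonnegative for $\theta\in\Delta^+$, nonpositive for $\theta\in\Delta^-$). First I would write $\xi=\sum_{m=1}^{l}a_m\xi_m$ with $a_m\geq 0$, where $a_m=n_{j_i}$ when $m=j_i$ lies in the support $S=\{j_1,\dots,j_k\}$ and $a_m=0$ otherwise, so that $\xi_{can}=\sum_{m\in S}\xi_m$ corresponds to replacing each positive $a_m$ by $1$. Using the duality $\theta_m(\xi_{m'})=\sqrt{-1}\,\delta_{mm'}$ one computes, for a root $\theta=\sum_m c_m\theta_m$,
\[
\theta(\xi)=\sqrt{-1}\sum_{m=1}^{l}c_m a_m=\sqrt{-1}\sum_{m\in S}c_m a_m,
\]
the last equality holding because $a_m=0$ off $S$; likewise $\theta(\xi_{can})=\sqrt{-1}\sum_{m\in S}c_m$.

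Next I would record the sign comparison that does all the work. Since the $c_m$ with $m\in S$ all share a common sign and the weights $a_m$ are strictly positive on $S$, the integer $\sum_{m\in S}c_m a_m$ is zero precisely when every $c_m$ with $m\in S$ vanishes, positive precisely when the common sign is $+$ and some $c_m>0$ with $m\in S$, and negative otherwise; and exactly the same trichotomy holds verbatim for $\sum_{m\in S}c_m$. In other words, the sign of $\theta(\xi)/\sqrt{-1}$ coincides with the sign of $\theta(\xi_{can})/\sqrt{-1}$ for every root $\theta$.

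From this the two claims follow directly. For the first, $\mathfrak{g}^{\xi}_0=\mathfrak{t}^{\mathbb{C}}\oplus\bigoplus_{\theta(\xi)=0}\mathfrak{g}_\theta$, with the Cartan part common to both $\xi$ and $\xi_{can}$ since $\mathrm{ad}\,\xi$ and $\mathrm{ad}\,\xi_{can}$ both annihilate $\mathfrak{t}^{\mathbb{C}}$; and $\theta(\xi)=0\iff\theta(\xi_{can})=0$ by the sign comparison, so $\mathfrak{g}^{\xi}_0=\mathfrak{g}^{\xi_{can}}_0$. For the second, I would first observe that $\sum_{0\leq j\leq r(\xi)-1}\mathfrak{g}^{\xi}_{j+1}=\bigoplus_{j\geq 1}\mathfrak{g}^{\xi}_j$, simply because $\mathfrak{g}^{\xi}_j=0$ for $j>r(\xi)$ by definition of the height; thus the identity to be proved is that the sum of all strictly positive eigenspaces of $\mathrm{ad}\,\xi$ equals that of $\mathrm{ad}\,\xi_{can}$. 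A root $\theta$ contributes to $\bigoplus_{j\geq 1}\mathfrak{g}^{\xi}_j$ exactly when $\theta(\xi)/\sqrt{-1}>0$, and by the sign comparison this happens iff $\theta(\xi_{can})/\sqrt{-1}>0$, i.e. iff $\theta$ contributes to $\bigoplus_{j\geq 1}\mathfrak{g}^{\xi_{can}}_j$. Since the two sums collect the same root spaces, they are equal.

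The only genuinely delicate point is the sign-consistency step, and even that rests entirely on the standard fact that a root's expansion in simple roots has coefficients of one sign; once that is invoked, the argument is purely combinatorial bookkeeping on the support $S$. I would take care to keep $\mathfrak{t}^{\mathbb{C}}$ inside the zero-eigenspace in claim (1), and to note explicitly that the heights $r(\xi)$ and $r(\xi_{can})$ need not be compared at all, since the reformulation as ``sum of all positive eigenspaces'' sidesteps them.
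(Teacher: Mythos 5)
Your proof is correct. The paper itself gives no proof of this lemma (it simply cites Lemma 3.4 of Burstall--Guest), and your argument is the standard one underlying that reference: since every root has simple-root coefficients of a single sign, the sign of $\theta(\xi)/\sqrt{-1}$ depends only on the support $\{j_1,\dots,j_k\}$ and not on the positive weights $n_{j_i}$, which immediately identifies the zero eigenspace and the full sum of positive eigenspaces for $\xi$ and $\xi_{can}$. Your reformulation of $\sum_{0\leq j\leq r(\xi)-1}\mathfrak{g}^{\xi}_{j+1}$ as the sum of all positive eigenspaces, which avoids any comparison of the heights $r(\xi)$ and $r(\xi_{can})$, is exactly the right way to handle the second identity.
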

Set
\begin{equation}\left\{
\begin{array}{llllll}
&\mathfrak{f}^{\xi}_j&:= &\underset{k\leq j}{\oplus}\mathfrak{g}^{\xi}_k,\\
 &(\mathfrak{f}^{\xi}_j)^{\perp}&:=&\underset{j< k \leq r(\xi)}{\oplus} \mathfrak{g}^{\xi}_k, \\
 &\mathfrak{u}^0_{\xi}&:=&\underset{0\leq j <r(\xi)} {\oplus}\lambda^{j}(\mathfrak{f}^{\xi}_j)^{\perp}\in \Lambda^+\mathfrak{g}^\C_{\sigma}.\\
\end{array}\right.
\end{equation}
 {Now we can state (in our notation) some of the main results of \cite{BuGu}.}

\begin{theorem}\label{thm-finite-uniton0} (Theorem 1.2, Theorem 4.5, and  {p.560} of \cite{BuGu})
Assume $G$ is connected, compact, and semisimple with trivial center.
\begin{enumerate}
  \item Let $\Phi:M\rightarrow \Omega^k_{alg}G$ be an extended solution of finite uniton number. Then there exists some canonical $\xi\in \mathcal{I}'$, some $\gamma\in \Omega_{alg}G$, and some discrete subset $D'\subset M$, such that on $M\setminus D'$,  {the following Iwasawa decomposition of $\exp C \cdot \gamma_{\xi}$ holds:}
\begin{equation}\label{eq-finite}
 {\gamma\Phi=\exp C \cdot \gamma_{\xi} \cdot (\Phi^+_{\xi})^{-1},}
\end{equation}
where $C:M\rightarrow \mathfrak{u}^0_{\xi}$ is a (vector-valued) {\em meromorphic  {function}} with poles in $D'$
and  {$\Phi^+_{\xi}: M \backslash D' \rightarrow \Lambda^+G^\C_{\sigma}$. Moreover, the Maurer-Cartan form of $\exp C$ is given  by}
\begin{equation}\label{eq-C}
 {(\exp C)^{-1}\dd(\exp C)=\sum_{0\leq j\leq r(\xi)-1}\lambda^j A_j'\dd z,}
\end{equation}
 {with $A_j':M\rightarrow \mathfrak{g}^{\xi}_{j+1}$ being meromorphic functions with poles contained in $D'$  for each $j$.}

 \item
 Conversely, let $\xi\in \mathcal{I}'$ be a canonical element and  $C:M \rightarrow \mathfrak{u}^0_{\xi}$  a meromorphic function satisfying \eqref{eq-C}. Let $D'\subset M$ be the set of poles of $C$,
 {and  $\Phi=(\exp C \cdot \gamma_{\xi}) \cdot (\Phi^+_{\xi})^{-1} $  be an Iwasawa decomposition of $ \exp C \cdot \gamma_{\xi}$. Then $\Phi$ is an extended solution of finite uniton number on $M$.}
\end{enumerate}
\end{theorem}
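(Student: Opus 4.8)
The plan is to prove the two directions by different means: the converse (ii) reduces to a Maurer--Cartan computation feeding into the DPW construction, whereas the forward direction (i) rests on the Bruhat/Morse stratification of the algebraic based loop group. I would begin with (ii), which is self-contained. The one algebraic fact that drives everything is the grading shift produced by conjugation with $\gamma_\xi$: since $\mathrm{Ad}(\gamma_\xi(\lambda))$ acts on $\mathfrak{g}^\xi_j$ as multiplication by $\lambda^j$, conjugation by $\gamma_\xi^{-1}$ sends an element $\lambda^j A'_j$ with $A'_j\in\mathfrak{g}^\xi_{j+1}$ to $\lambda^{-1}A'_j$. Applying this to the hypothesis \eqref{eq-C} on $\exp C$ gives $\gamma_\xi^{-1}(\exp C)^{-1}(\exp C)_z\,\gamma_\xi=\lambda^{-1}\hat A'$ with $\hat A'=\sum_{j}A'_j$ independent of $\lambda$. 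Because $C$ is meromorphic (holomorphic in $z$ off its poles), the product $g:=\exp C\cdot\gamma_\xi$ therefore has Maurer--Cartan form $g^{-1}dg=\lambda^{-1}\hat A'\,dz$, that is, a $(1,0)$-form of exactly normalized-potential type (recall that for the group case $\mathfrak{p}^\C\cong\mathfrak{g}^\C$, so no further constraint on $\hat A'$ is needed).

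Consequently $g=\exp C\,\gamma_\xi$ plays the role of a meromorphic extended frame $F_-$, and $\Phi=\exp C\,\gamma_\xi(\Phi^+_\xi)^{-1}$ is precisely its Iwasawa unitary factor, $g=\Phi\,\Phi^+_\xi$ with $\Phi\in\Omega G$ and $\Phi^+_\xi\in\Lambda^+G^\C_\sigma$. By the DPW construction (Theorem \ref{thm-DPW}) together with the extended-frame/extended-solution dictionary of Subsection 4.1, $\Phi$ is then the extended solution of a harmonic map into $G$, defined off the discrete pole set $D'$ of $C$. Since $\exp C$ is polynomial in $\lambda$, $\gamma_\xi$ is algebraic, and the Iwasawa factors of algebraic loops are algebraic, $\Phi$ takes values in $\Omega_{alg}G$, with uniton number bounded in terms of the height $r(\xi)$. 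This disposes of (ii); no delicate estimates are needed, only the grading identity above.

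For the forward direction (i) the strategy is that of Burstall--Guest. One regards the finite-uniton extended solution $\Phi$ as an algebraic (hence essentially holomorphic) map into $\Omega_{alg}G$ and exploits the stratification of this space by the gradient flow of the energy, whose critical manifolds are the conjugacy classes of the homomorphisms $\gamma_\xi$ indexed by $\mathcal{I}'$. First, by Lemma \ref{lemma-es} I may left-translate $\Phi$ by a suitable $\gamma\in\Omega_{alg}G$ without leaving the class of extended solutions, so as to move $\gamma\Phi$ into standard position with respect to the Bruhat cells. Next, since $\gamma\Phi$ is algebraic and the strata are locally closed, its image meets a single open (``top'') stratum off a discrete set $D'\subset M$; that stratum is the unstable manifold of one critical point $\gamma_\xi$, and its cell coordinates are exactly $\exp$ of the nilpotent piece $\mathfrak{u}^0_\xi$, which yields the normal form $\gamma\Phi=\exp C\,\gamma_\xi(\Phi^+_\xi)^{-1}$ with $C$ valued in $\mathfrak{u}^0_\xi$. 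Reading the defining extended-solution equation \eqref{eq-Uh1} for $\gamma\Phi$ backwards through the $\gamma_\xi$-conjugation (the reverse of the computation in (ii)) then forces the constraint \eqref{eq-C} on $\exp C$. Finally, Lemma \ref{lemma-xi} lets the a priori $\xi$ be replaced by its canonical representative $\xi_{can}$: since $\mathfrak{g}^\xi_0=\mathfrak{g}^{\xi_{can}}_0$ and $\sum_j\mathfrak{g}^\xi_{j+1}=\sum_j\mathfrak{g}^{\xi_{can}}_{j+1}$, both $\mathfrak{u}^0_\xi$ and the constraint \eqref{eq-C} are unchanged, so $\xi$ may be taken canonical.

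The hard part is the stratification step in (i). What must really be established is that $\Omega_{alg}G$ decomposes into Bruhat-type cells attached to the $\gamma_\xi$, that each cell is parameterized by $\exp$ of the relevant nilpotent subalgebra, and---most importantly---that an extended solution of finite uniton type, viewed as an algebraic map, lands generically in a single top cell. This is where the Morse theory of the energy functional on the loop group (in the spirit of Pressley--Segal and Atiyah--Pressley) and the identification of its critical set with $\{\gamma_\xi\}$ are indispensable; finiteness of the uniton number is exactly what confines $\Phi$ to $\Omega_{alg}G$ and makes the $\lambda$-power filtration finite, so that only finitely many strata occur. Once the cell structure and the generic-transversality statement are in hand, everything else---the grading computation, the passage between the extended-solution equation and \eqref{eq-C}, and the canonicalization via Lemma \ref{lemma-xi}---is purely algebraic.
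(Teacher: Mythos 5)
The paper does not actually prove this theorem---it is imported verbatim from Burstall--Guest (Theorem 1.2, Theorem 4.5 and p.~560 of \cite{BuGu})---and your outline correctly reproduces that source's argument: the Morse/Bruhat stratification of $\Omega_{alg}G$ with critical set $\{\gamma_\xi\}$ and unstable cells parameterized by $\exp(\mathfrak{u}^0_\xi)$ for part (i), and the $\mathrm{Ad}(\gamma_\xi)$ grading computation followed by Iwasawa splitting for part (ii). Your argument for (ii) is essentially the same computation the paper performs later in the proof of Theorem \ref{thm-finite-uniton2}, and the genuinely hard step you flag in (i)---establishing the cell decomposition of $\Omega_{alg}G$ and that an algebraic extended solution lies generically in a single stratum---is precisely the content that both you and the paper delegate to \cite{BuGu}.
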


\begin{remark}\
  \begin{enumerate}
\item
 {Let $z_0\in M\setminus D'$ be some point. Then $C_0=C(z_0)\in \mathfrak{u}^0_{\xi}$
 and $\Phi(z_0,\bar{z}_0,\lambda)\in\Omega^k_{alg}G$ gives some initial condition.}
  \item By Lemma \ref{lemma-es}, since $\Phi(z,\bar{z},\lambda)$ is an extended solution of a harmonic map
  $\Phi(z,\bar{z},-1)$, then  $\gamma\Phi(z,\bar{z},\lambda)$ is  an extended solution of the harmonic map
   $\gamma(-1)\Phi(z,\bar{z},-1)$, which is congruent to the harmonic map $\Phi(z,\bar{z},-1)$ up to the group action of $G$ from the left.  {So for the harmonic map $\gamma(-1)\Phi(z,\bar{z},-1)$ we have an extended solution for which without loss of generality we can always assume $\gamma=e$ in $(1)$ of the theorem above.}

  \item
  On page 560 of \cite{BuGu}, the elements $A'_j$ are defined so that they take values in $\mathfrak{f}^{\xi}_{j+1}=\sum_{k\leq j+1}\mathfrak{g}^{\xi}_{k}$ which is due to the harmonicity of $\mathbb{F}$. In view of the restriction on $C$ to take values in $\mathfrak{u}_{\xi}^0$,  the computations on page 561 of \cite{BuGu} for $(\exp C)^{-1}(\exp C)_z$  imply that
$A'_j$  takes values in $\sum_{k\geq j+1}\mathfrak{g}^{\xi}_{k}$.  Therefore $A'_j \in \mathfrak{g}^{\xi}_{j+1}$ as stated above.
\end{enumerate}
\end{remark}


\subsection{Finite uniton  {type \`{a} la} Burstall-Guest for harmonic maps into symmetric spaces} \label{f.u.alaBuGu}

 {Consider a harmonic map $\mathcal{F}: M \rightarrow G/K$ from $M$ into the inner symmetric space
$G/\hat K$. As stated in \cite{BuGu}, the inner symmetric space $G/\hat{K}$,  can be embedded into $G$ via the modified Cartan map $\mathfrak{C}_h$  such that  $\mathfrak{C}_h(G/\hat{K})$ is a connected component of
$\sqrt{e}$,\  where $\sqrt{e}:=\{g\in G| g^2=e \}.$}
 Let  $$ {\T}:\Omega G\rightarrow \Omega G,\ \  {\T}(\gamma)(\lambda)=\gamma(-\lambda)\gamma(-1)^{-1}$$
be an involution of $\Omega G$ (see Section 2.2) with the fixed point set
\[(\Omega G)_{ {\T}}=\{\gamma\in\Omega G|  {\T}(\gamma)=\gamma\}.\]
 { We give a version of Proposition 5.2 of \cite{BuGu} adapted to our approach.}

\begin{proposition}\label{eq-check-xi} { Let $\mathcal F$ be a harmonic map of finite uniton number with an extended frame $F(z,\bar z, \lambda)$. Let $\Phi(z,\bar z,1,\lambda)=A(1, {\lambda})^{-1} F(z,\bar{z}, \lambda)F(z,\bar{z}, 1)^{-1}$ be an extended solution for the harmonic map $\mathbb{F}=\mathfrak{C}_h\circ\mathcal F$ as described in
(\ref{eq-lawson2}) of Corollary \ref{cor-Phi-F}.} Then there exists some $\tilde\xi \in \mathfrak{t}$  satisfying
$\exp( \pi \tilde{\xi}) = h$, where $\mathfrak{t}$  denotes the Lie algebra of  {the maximal torus
$\mathrm{T}$ in $G$.}
 Setting $\gamma_{\tilde\xi} (\lambda) = \exp (t \tilde\xi)$ for $\lambda = e^{it}$ and $\tilde{\gamma} (\lambda) = \gamma_{\tilde\xi} (\lambda) A(1, \lambda)$ we obtain
$\tilde{\gamma}(\lambda)\in\Omega G$ , $\tilde{\gamma}(-1)=e$ and
\begin{equation}\label{eq-TP}
 {\T}(\tilde{\gamma}(\lambda)\Phi(z,\bar z,1,\lambda))=\tilde{\gamma}(\lambda)\Phi(z,\bar z,1,\lambda).
 \end{equation}
 Moreover,  $\tilde{\gamma}(\lambda)\Phi$ is also an extended solution for
 $\mathbb F$.
Furthermore, if $\Phi(z,\bar z,1,\lambda)$ takes  {values} in $\Omega _{alg}G$, then we also have  $\tilde{\gamma}(\lambda)\in \Omega _{alg}G$ and  $\tilde{\gamma}(\lambda)\Phi(z,\bar z,1,\lambda)$ takes  {values} in $(\Omega _{alg}G)_{ {\T}}$.
\end{proposition}

\begin{proof} The proof follows closely the one of \cite{BuGu}. First we note that $h$ is contained in the maximal torus. This implies the existence of $\tilde\xi$.
Next we verify \eqref{eq-TP}. Since \[\gamma_{\tilde\xi}(-\lambda)=\exp((\pi+t)\tilde \xi)=
\gamma_{\tilde\xi}(\lambda)h\ \hbox{ and }\ F(z,\bar{z}, -\lambda)h=hF(z,\bar{z}, \lambda),\] we have
\[\begin{split}
 {\T}(\tilde{\gamma}(\lambda)\Phi(z,\bar z,1,\lambda))
&= {\T}\left(\gamma_{\tilde\xi}(\lambda)F(z,\bar{z}, \lambda)F(z,\bar{z}, 1)^{-1}\right)\\
&=\gamma_{\tilde\xi}(-\lambda)F(z,\bar{z}, -\lambda)F(z,\bar{z}, 1)^{-1}F(z,\bar{z}, 1)F(z,\bar{z}, -1)^{-1}\gamma_{\tilde\xi}(-1)\\
&=\gamma_{\check\xi}(\lambda)hF(z,\bar{z}, -\lambda)F(z,\bar{z}, -1)^{-1}h\\
&=\gamma_{\check\xi}(\lambda)F(z,\bar{z}, \lambda)hF(z,\bar{z}, 1)^{-1}\\
&=\tilde{\gamma}(\lambda)\Phi(z,\bar z,1,\lambda).
\end{split}\]
\end{proof}
 {By Proposition \ref{eq-check-xi} we can assume without loss of generality that $\Phi$ has the form \begin{equation}\label{eq-Phi-F}
\Phi=\gamma_{\tilde\xi}(\lambda)F(z,\bar{z}, \lambda)F(z,\bar{z}, 1)^{-1}.\end{equation}}
With this $\Phi$ we obtain:

\begin{theorem}\label{thm-finite-uniton1}(Proposition 5.3, Theorem 5.4  {and p.567 of } \cite{BuGu})
Assume that $G$ is connected, compact, and semisimple with trivial center.

  \begin{enumerate}
  \item Let $\Phi:M\rightarrow (\Omega^k_{alg}G)_\T$ be an extended solution for some harmonic map
   { $\mathcal F$  of finite uniton number related by \eqref{eq-Phi-F}.}  Then there exists some canonical element $\xi$ in $\mathcal{I}'$, some $\gamma \in \Omega_{alg}G$  {satisfying $\gamma(-1) = e,$ }and some discrete subset $D'$ of $M$ such that on $M\setminus D'$,
\begin{equation}
  { \gamma\Phi =\exp C \cdot \gamma_{\xi} \cdot (\Phi^+_{\xi})^{-1},}
 \end{equation}
where  {$C:M\rightarrow (\mathfrak{u}^0_{\xi})_\T$ is a {\em meromorphic map} on $M$ with poles in $D'$} and $$(\mathfrak{u}^0_{\xi})_\T=\bigoplus_{0\leq 2j <r(\xi)}\lambda^{2j}(\mathfrak{f}^{\xi}_{2j})^{\perp}.$$
Moreover, $\xi$  satisfies \begin{equation}G/\hat{K} \cong\{g(\exp \pi \xi)g^{-1}| g\in G\}.
\end{equation}

 {Note that both $\Phi$ and $\gamma \Phi$ are extended solutions of $\mathcal F$.}
  \item
  Conversely, let  $\xi\in \mathcal{I}'$ be a canonical element.
    {Assume that $C:M \rightarrow (\mathfrak{u}^0_{\xi})_\T$ is a meromorphic function} such that
 {\begin{equation}
(\exp C)^{-1}\dd(\exp C)=\sum_{0\leq 2j\leq r(\xi)-1}\lambda^j A_{2j}'\dd z,\
\end{equation}
with $A_{2j}':M \rightarrow \mathfrak{g}^{\xi}_{2j+1}$ being meromorprhic,
$\ 0\leq 2j\leq r(\xi)-1$.}
Let $D'\subset M$ be the set of poles of $C$, then
 {$\Phi_{M\setminus D'}=\gamma_{\xi}^{-1} \cdot  \exp C  \cdot \gamma_{\xi} \cdot (\Phi^+_{\xi})^{-1} $} is an extended solution  { for some harmonic map of finite uniton number  $F:M\setminus D' \rightarrow G/\hat{K}\cong\{g(\exp \pi \xi)g^{-1}| g\in G\}\subset G$.}
 \end{enumerate}\end{theorem}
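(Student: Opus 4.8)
The plan is to obtain the symmetric-space statement from the Lie-group result of Theorem \ref{thm-finite-uniton0} by systematically exploiting the extra $T$-symmetry carried by extended solutions of symmetric-space harmonic maps. For part (i), I would first note that an extended solution $\Phi : M \to (\Omega^k_{alg} G)_T$ is, after forgetting the $T$-symmetry, simply an extended solution of finite uniton number into $G$. Hence Theorem \ref{thm-finite-uniton0}(i) applies verbatim and produces a canonical element $\xi \in \mathcal{I}'$, a loop $\gamma \in \Omega_{alg} G$, a discrete set $D' \subset M$, and a meromorphic map $C : M \to \mathfrak{u}^0_\xi$ satisfying $\gamma\Phi = \exp C\, \gamma_\xi (\Phi^+_\xi)^{-1}$ together with the Maurer--Cartan condition \eqref{eq-C}. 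All of the remaining work consists in showing that the $T$-invariance of $\Phi$ forces (a) $\xi$ to represent the given symmetric space and (b) $C$ to take values in the smaller subspace $(\mathfrak{u}^0_\xi)_T$.

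To identify $\xi$ I would evaluate the $T$-symmetry at $\lambda = -1$. Since $T(\Phi) = \Phi$ gives $\Phi(\cdot,-1)^2 = e$, the map $\mathbb{F} = \Phi(\cdot,-1)$ lands in $\sqrt{e}$ and is exactly the harmonic map into the symmetric component under consideration. Reading the canonical factorization at $\lambda = -1$ and using $\gamma_\xi(-1) = \exp(\pi\xi)$, one finds that $\mathbb{F}$ takes values in the conjugacy class determined by $\exp(\pi\xi)$; comparing with the Cartan realization $R_h \circ \mathfrak{C}$ of $G/\hat{K}$ inside $\sqrt{e}$ then forces $\exp(\pi\xi)$ to be conjugate to the element $h$ defining $\sigma$, and yields $G/\hat{K} \cong \{g(\exp\pi\xi)g^{-1} \mid g \in G\}$. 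This is precisely the constraint singling out the admissible canonical elements $\xi$.

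The heart of the argument is to push the $T$-symmetry onto $C$. Here I would track the substitution $\lambda \mapsto -\lambda$ underlying $T$ through each factor. On one hand $\gamma_\xi(-\lambda) = \gamma_\xi(\lambda)\exp(\pi\xi)$ shows $\gamma_\xi$ is $T$-fixed; on the other, $\mathrm{Ad}(\exp\pi\xi) = \exp(\pi\,\mathrm{ad}\,\xi)$ acts on the eigenspace $\mathfrak{g}^\xi_k$ as $\exp(\pi\sqrt{-1}\,k) = (-1)^k$, i.e. it is exactly the involution $\sigma$ on $\mathfrak{g}^{\C}$, fixing $\mathfrak{g}^\xi_k$ for $k$ even and negating it for $k$ odd. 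Combining the sign $(-1)^j$ coming from $\lambda^j \mapsto (-\lambda)^j$ with this eigenspace sign, and invoking the uniqueness of the canonical factorization in Theorem \ref{thm-finite-uniton0} (so that $T$ cannot alter $\xi$ and can only move $\gamma$ within a controlled set, after which $\gamma$ may be re-chosen $T$-compatibly), I would conclude that the odd-power components of $C$ must vanish. This leaves $C : M\setminus D' \to (\mathfrak{u}^0_\xi)_T = \bigoplus_{0 \le 2j < r(\xi)} \lambda^{2j}(\mathfrak{f}^\xi_{2j})^\perp$, holomorphic on $M \setminus D'$ since it is the meromorphic $C$ of the group case restricted to the complement of its poles.

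For the converse (ii) I would run Theorem \ref{thm-finite-uniton0}(ii) forward: a canonical $\xi$ together with a meromorphic $C : M \to (\mathfrak{u}^0_\xi)_T$ whose Maurer--Cartan form has the prescribed shape already yields, by that theorem, an extended solution of finite uniton number into $G$. It then remains to verify that $\Phi = \gamma_\xi^{-1}\exp C\,\gamma_\xi(\Phi^+_\xi)^{-1}$ is $T$-fixed, which follows by re-running the sign bookkeeping of the previous paragraph, now with the hypothesis $C \in (\mathfrak{u}^0_\xi)_T$ guaranteeing $T$-invariance; hence $\Phi$ maps into $G/\hat{K} \cong \{g(\exp\pi\xi)g^{-1}\}$. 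The step I expect to be the main obstacle is the middle one: transferring the $T$-symmetry of $\Phi$ rigorously into the precise vanishing of the odd-power part of $C$ and the $T$-compatibility of the choice of $\gamma$. This is delicate because $T$ a priori only fixes $\Phi$, not the individual factors $\exp C$, $\gamma_\xi$, $\Phi^+_\xi$; making the argument work requires using the uniqueness of the canonical form to show that applying $T$ returns the same $\xi$ and a factorization differing only by an element one can absorb into $\gamma$, so that the even/odd decomposition of $\mathfrak{g}^{\C}$ under $\mathrm{Ad}(\exp\pi\xi)$ can be invoked cleanly.
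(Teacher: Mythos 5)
This theorem is not proved in the paper at all: it is quoted directly from Burstall--Guest (Proposition 5.3 and Theorem 5.4 of \cite{BuGu}), so there is no in-paper argument to compare against. Judged on its own merits, your outline has the right ingredients in places --- the computation $\mathrm{Ad}(\exp\pi\xi)|_{\mathfrak{g}^\xi_k}=(-1)^k$ and the observation that $\gamma_\xi$ is $T$-fixed are both correct and are indeed where the even/odd restriction on $C$ ultimately comes from --- but the central step is genuinely gapped, at exactly the point you flag yourself.

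The gap is this: you propose to first apply the non-equivariant Theorem \ref{thm-finite-uniton0}(i) to obtain some $\xi$, $\gamma$, $C$, and only afterwards impose $T\Phi=\Phi$ to force $C$ into $(\mathfrak{u}^0_\xi)_T$ via ``uniqueness of the canonical factorization''. This does not go through as stated. First, $T$ is not a group automorphism of $\Omega G$ (one has $T(\gamma_1\gamma_2)(\lambda)=\gamma_1(-\lambda)\,T(\gamma_2)(\lambda)\,\gamma_1(-1)^{-1}$), so applying $T$ to $\gamma\Phi=\exp C\,\gamma_\xi(\Phi^+_\xi)^{-1}$ does not produce another factorization of the same canonical shape with a transformed $C$ to which any uniqueness statement could be applied; moreover the group-level theorem gives no control whatsoever on how its $\gamma$ interacts with $T$, so there is nothing to ``re-choose $T$-compatibly'' without redoing the construction. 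Second, your identification of $\xi$ by evaluating at $\lambda=-1$ is not sound: $\gamma(-1)\Phi(z,-1)$ need not square to $e$, so one cannot read the conjugacy class of $\Phi(\cdot,-1)$ off the factorization at $\lambda=-1$. What Burstall--Guest actually do is run the entire Bruhat/Morse-flow argument equivariantly: the energy flow on $\Omega_{alg}G$ preserves $(\Omega_{alg}G)_T$, the $T$-fixed critical points in the relevant connected component are precisely the $\gamma_\xi$ with $\{g\exp(\pi\xi)g^{-1}\}$ realizing the given component of $\sqrt{e}$ (their Lemmas 5.1--5.2), and the unstable-manifold parametrization then restricts to the $T$-fixed subspace --- which is where your sign bookkeeping legitimately enters. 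So the even/odd analysis is the right closing move, but it must be built into the construction rather than extracted afterwards. Part (ii) of your proposal is essentially fine: with $C$ assumed to take values in $(\mathfrak{u}^0_\xi)_T$, a direct check shows the resulting $\Phi$ is $T$-fixed.
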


\begin{remark}  A   generalization of Burstall and Guest's theory for outer compact symmetric spaces has been published in \cite{Esch-Ma-Qu}.
\end{remark}

\subsection{The Burstall-Guest theory in relation to standard DPW theory} \label{BuGu<->DPW}


Using the above theorems, we can derive the normalized potential of harmonic maps of finite uniton type, showing that they are meromorphic 1-forms taking values in a fixed nilpotent Lie algebra.

  {Note that these results have been explained in Appendix B of \cite{BuGu} and Theorem 1.11 of \cite{Gu2002}. Here we rewrite them in terms of our language as well as a proof for later applications and for the convenience of readers. We also would like to point out that the work of Burstall and Guest does  not consider initial conditions, e.g.  for extended frames. So in general the value of an extended  frame will
 be different from $e$ at some fixed basepoint $z_0$.  We will show below in Theorem 4.20 below that also the standard DPW theory with prescribed initial condition at some fixed base point produces all harmonic maps of finite uniton number as well. All statements in the theorems below can be derived from the work of Burstall and Guest. But the presentation uses substantially  the DPW method. Therefore, for the convenience of the reader, we include a proof.}

\begin{theorem} \label{thm-finite-uniton2}We retain the  {the notation and the assumptions  of  Theorem \ref{thm-finite-uniton0} and Theorem \ref{thm-finite-uniton1} as needed.}
\begin{enumerate}
  \item
  Let $\mathbb{F}:M\rightarrow G$ be a harmonic map of finite uniton number  {with extended solution $\Phi(z,\bar z,\lambda)$ as stated in Theorem \ref{thm-finite-uniton0}. Then ${\Phi_-}:=\gamma_{\xi}^{-1}\cdot \exp C \cdot \gamma_{\xi}=\gamma_{\xi}^{-1}\cdot\Phi\cdot\Phi_+$} \violet{has} a Maurer-Cartan form
 {\begin{equation}\label{eq-eta} {\mathbb A_-}:= {\Phi_-}^{-1}\mathrm{d} {\Phi_-}=\lambda^{-1}\sum_{0\leq j\leq r(\xi)-1} A_j'\dd z, \end{equation}
where each $A_j':M\rightarrow \mathfrak{g}^{\xi}_{j+1}$ is a meromorphic function  {on $M$} with poles in $D'$.}
Moreover, at some base point $z_0\in M\setminus D'$ we have
 \begin{equation}\label{eq-initial1}
 {\Phi_-}(z_0)= {\Phi_{-0}}:= {\gamma_{\xi}^{-1} \cdot \exp C(z_0) \cdot \gamma_{\xi}} \in \Lambda^- G^{\C},\ C(z_0)\in\mathfrak{u}^0_{\xi}.
 \end{equation}

 Conversely, given $ {\mathbb A_-}$ which takes values in  $\lambda^{-1}\cdot\sum_{0\leq j\leq r(\xi)-1}\mathfrak{g}^{\xi}_{j+1}$ and an initial condition of $ {\Phi_-}$ of the form \eqref{eq-initial1},  {assume that  on $ M$,
there exists a global meromorphic solution $ {\Phi_-}$  (with poles in $\D'$)} \violet{to}
  \begin{equation}\label{eq-A-1} {\Phi_-}^{-1}\mathrm{d} {\Phi_-}= {\mathbb A_-}, \ ~~~ {\Phi_-}(z_0)= {\Phi_{-0}}= {\gamma_{\xi}^{-1} \cdot \exp C_0 \cdot \gamma_{\xi}\ } \in \Lambda^- G^{\C}\hbox{ with } C_0\in\mathfrak{u}^0_{\xi}.\end{equation}
  {The Iwasawa decomposition of $\gamma_{\xi}\Phi_-$ gives a harmonic map $\mathbb{F}$ of finite uniton number in $G$.}

  {The above two procedures are inverse to each other when the initial conditions match.}
 \item Let $\mathcal{F}:M\rightarrow G/\hat{K}$ be a harmonic map of finite uniton number
  {with extended frame $F(z,\bar z,\lambda)$ based at $z_0 \in M$ with initial value $e$.} Embed $G/\hat{K}$ into $G$ as totally geodesic submanifold via the  { modified} Cartan embedding. Then there exists some canonical $\xi\in \mathcal{I}'$,
some discrete subset $D'\subset M$, such that
 \begin{equation}
G/\hat{K}\cong\{g(\exp \pi \xi)g^{-1}| g\in G\},
\end{equation}
and that $ {F_-= \gamma_{\xi}^{-1} \cdot \exp C \cdot \gamma_{\xi}}$ is a meromorphic extended frame of $\mathcal{F}$ with the normalized potential having the form
 {\begin{equation}\eta=F_-^{-1}\mathrm{d}F_-=\lambda^{-1}\sum_{0\leq 2j\leq r(\xi)-1} A_{2j}'\dd z, \end{equation}
where $A_{2j}':M\rightarrow \mathfrak{g}^{\xi}_{2j+1}$ is a meromorphic function  on $M$ with poles in $D'$ for each $j$.}  And at the base point $z_0$
 \begin{equation}\label{eq-initial2}F_-(z_0)=F_{-0}:= {\gamma_{\xi}^{-1} \cdot \exp C(z_0) \cdot \gamma_{\xi} } \ in\ \Lambda^- G^{\C},\ C(z_0)\in(\mathfrak{u}^0_{\xi})_T.
 \end{equation}
 Conversely, given a  meromorphic normalized potential $\eta$ which takes values in  $\lambda^{-1}\cdot\sum_{0\leq 2j\leq r(\xi)-1}\mathfrak{g}^{\xi}_{2j+1}$ and an initial condition
  {$F_{-0}$ of $F_-$ of } the form \eqref{eq-initial2},
 {assume that on $M$ there exists a global  solution $F_-$ (with poles in $\D'$)}  {to}
 \begin{equation}\label{eq-initial3}F_-^{-1}\mathrm{d}F_-=\eta, \ ~~~F_-(z_0)=F_{-0}. \end{equation}
 {The Iwasawa decomposition of $F_-(z,\lambda)$ gives the extended frame of a harmonic} map of finite uniton number into $ \{g(\exp \pi \xi)g^{-1}| g\in G\}\cong G/\hat{K} $.

  {The above two procedures are inverse to each other when the initial conditions match.}
\end{enumerate}
\end{theorem}

 {Note that part (1) of Theorem \ref{thm-finite-uniton2} is essentially Proposition B1 of Appendix B of \cite{BuGu}.}

\begin{proof}
(1) First we note that by page 557 in \cite{BuGu},
\[ \gamma_{\xi}^{-1}X\gamma_{\xi}=\lambda^{-j-1}X,~ \hbox{ for any element } ~X\in\mathfrak{g}^{\xi}_{j+1}.\]
Together with the definition of $C$  { in Theorem \ref{thm-finite-uniton0},} we have
 { ${\gamma_{\xi}}^{-1} \cdot \exp C \cdot \gamma_{\xi} \in\Lambda^-G^{\mathbb{C}}$.}

Now consider the Maurer-Cartan form of  {$\gamma_{\xi}^{-1} \cdot \exp C \cdot \gamma_{\xi}$.} We have
\begin{equation*}\begin{split}( {\gamma_{\xi}^{-1} \cdot \exp C \cdot \gamma_{\xi})^{-1}\cdot }\mathrm{d}( {\gamma_{\xi}^{-1} \cdot \exp C \cdot \gamma_{\xi}})&=( {\gamma_{\xi}^{-1} \cdot
\exp C \cdot \gamma_{\xi}})^{-1}( {\gamma_{\xi}^{-1} \cdot \exp C \cdot \gamma_{\xi}})_z\mathrm{d}z \\
&=
 \gamma_{\xi}^{-1}\left( \exp C^{-1}(\exp C)_z\right) \gamma_{\xi}\mathrm{d}z,\end{split}\end{equation*}
since $\gamma_{\xi}$ is independent of $z$.

By \eqref{eq-C} in Theorem \ref{thm-finite-uniton0},
\begin{equation*}\label{eq-exp-C} (\exp C)^{-1}(\exp C)_z=\sum_{j=0}^{r(\xi)-1}\lambda^j A'_j \ \ \hbox{ with }\ \ \ A'_j:M \rightarrow \mathfrak{g}^{\xi}_{j+1},\ 0\leq j\leq r(\xi)-1.\end{equation*}
So
\begin{equation*}( {\gamma_{\xi}^{-1} \cdot \exp C \cdot \gamma_{\xi}})^{-1}
( {\gamma_{\xi}^{-1}\cdot \exp C \cdot \gamma_{\xi}})_z=\gamma_{\xi}^{-1} \left( \sum_{1\leq j\leq r(\xi)-1}\lambda^j A'_j\right)   \gamma_{\xi} =\sum_{1\leq j\leq r(\xi)-1} \lambda^j(\gamma_{\xi}^{-1} A'_j  \gamma_{\xi}).
\end{equation*}
By page 557 in \cite{BuGu}, for any element $X\in\mathfrak{g}^{\xi}_{j+1}$, $ \gamma_{\xi}^{-1}X\gamma_{\xi}=\lambda^{-j-1}X$.  Since $ A'_j $ takes values in $\mathfrak{g}^{\xi}_{j+1}$, we have
\[\gamma_{\xi}^{-1} A'_j  \gamma_{\xi}=\lambda^{-j-1}A'_j.\]
As a consequence,
\[( {\gamma_{\xi}^{-1}\cdot \exp C \cdot \gamma_{\xi}})^{-1}( {\gamma_{\xi}^{-1} \cdot
\exp C \cdot \gamma_{\xi}})_z=\sum_{1\leq j\leq r(\xi)-1} \lambda^j(\gamma_{\xi}^{-1} A'_j  \gamma_{\xi})= \lambda^{-1}\sum_{1\leq j\leq r(\xi)-1} A'_j .\]

 Conversely, let  $\Phi_-$ be a solution to \eqref{eq-A-1} as assumed. Then  $\Phi_-$ takes values in
$\Omega_{alg}G$, since $\lambda\mathbb A_-(\frac{\partial}{\partial z})$ takes values in a nilpotent Lie algebra.
{Letting $ \gamma_{\xi}\Phi_- =\Phi (\Phi_+)^{-1}$ } be the Iwasawa decomposition of $\gamma_{\xi}\Phi_-$. It hence produces a harmonic map of finite uniton number.

 (2) First one needs to restrict the above results to the case $A'_{2j+1}=0$ for all $j$ by  Theorem \ref{thm-finite-uniton1}.  By $(1)$ of Theorem \ref{thm-finite-uniton1} we can choose the extended solution associated to the harmonic map $\mathcal{F}$ such that
  $\Phi=\gamma_{\tilde\xi}(\lambda)F(z,\bar{z}, \lambda)h F(z,\bar{z}, 1)^{-1}$ holds. We see that in this case the Iwasawa decomposition of $\Phi$ yields
  $ \Phi_-=\Phi\Phi_+$, whence
   { \[\Phi_-=\Phi\Phi_+=\gamma_{\tilde\xi}(\lambda)F(z,\bar{z}, \lambda)hF(z,\bar{z}, 1)^{-1}\Phi_+,\]
 where the second equality follows  from (\ref{eq-Phi-F}).}
 {Consider $\tilde F(z,\bar z,\lambda):=\gamma_{\xi}(\lambda)^{-1}\gamma_{\tilde\xi}(\lambda)F(z,\bar{z}, \lambda)$. It is also an extended frame of the harmonic map $\mathcal F(z,\bar z,\lambda=1)$, but at the basepoint $z_0$ it might have an initial condition different from the initial condition of $F(z,\bar z, \lambda)$:  $\tilde F(z_0,\bar z_0,\lambda)=\gamma_{\xi}(\lambda)^{-1}\gamma_{\tilde\xi}(\lambda)F(z_0,\bar{z}_0, \lambda)$.} Also note that $\tilde F(z,\bar z,1)=F(z,\bar{z}, 1)$. We can w.l.g. replace $F(z,\bar z,\lambda)$ by $\tilde F(z,\bar z,\lambda)$, since we do not assume any special initial conditions. Then we have
\[\Phi_-=F(z,\bar{z}, \lambda)h F(z,\bar{z}, 1)^{-1}\Phi_+=F_-(z,\lambda)F_+(z,\bar{z}, \lambda)h F(z,\bar{z}, 1)^{-1}\Phi_+.\]
So $F_-(z,\lambda)=\Phi_-$. By (1) of this Theorem, we finish the proof of (2).
 \end{proof}

\begin{remark}\
\begin{enumerate}
\item Note that $\mathbb{A}_-$ in \eqref{eq-initial1} is not the usual normalized potential, since the harmonic map is mapped into the Lie group $G$ instead of $G/K$ and we do not have the initial condition $e$. Also by the discussion in Section 4.1.1 and for convenience we still call it ``normalized potential". For more relations between $\mathbb{A}_-$ and the real normalized potential (embedding $G$ to construct a harmonic map into $G\times G/G$), we also refer the readers to \cite{Do-Es}.

\item The initial condition  {$\Phi_{-0}$ and $F_{-0}$ respectively} of Theorem \ref{thm-finite-uniton2} can be removed by using dressing (see Theorem 1.11 of \cite{Gu2002}). For instance, assume that
$\hat{F}_-^{-1}\mathrm{d}\hat{F}_-=\eta, \  {\hat{F}_-(z_0,\lambda)}=e$. Then $F_-=F_{-0}\hat{F}_-$. By Iwasawa splitting we have
\[F_-=FF_+,\ \   \hat{F}_-=\hat{F}\hat{F}_+.\ \]
Assume that $F_{-0}=\gamma_0\gamma_+$ with $\gamma_0\in\Lambda G,$ $\gamma_+\in\Lambda^+ G^{\mathbb{C}}$. Therefore we have
\[FF_+=\gamma_0\gamma_+\hat{F}\hat{F}_+.\]
As a consequence, we obtain
\begin{equation*} \gamma_+\hat{F}=\gamma_0^{-1}FF_+\hat{F}_+^{-1}.\end{equation*}
Hence
\begin{equation}\gamma_+\sharp\hat{F}=\gamma_0^{-1}F.\end{equation}
So up to a rigid motion $\gamma_0^{-1}$, $F$ is the dressing of $\hat{F}$ by $\gamma_+$ (compare with Corollary \ref{cor-finite-dress}).
\end{enumerate}
  \end{remark}

\subsection{On the initial conditions of normalized meromorphic frames: compact case}

In the last theorem we have given a precise relation between certain ``meromorphic potentials" and harmonic maps of finite uniton type. For this one needs very specific initial conditions and  { very specific dressing matrices} respectively.
 { In the case of the last theorem the initial conditions occurring are very  complicated and very difficult to produce. So it is important and useful to show that all harmonic maps of finite uniton type into compact inner symmetric spaces can be determined by the data as given, but with initial condition $e$, which is the goal of this subsection}.

To this end, we need some preparations.
First, for an arbitrary element  {$\xi \in \mathcal{I}'$} we have the following decompositions
 {(see e.g. \eqref{defgjxi}):}
\begin{equation}\label{eq-pq-decom-g}
  { \mathfrak{g}^{\C}=\sum_j  \mathfrak{g}^{\xi}_j=\left(\sum_{j\geq0} \mathfrak{g}^{\xi}_j\right)\oplus\left(\sum_{j<0} \mathfrak{g}^{\xi}_j\right)=\mathfrak{pr}\oplus\mathfrak{q}.}
\end{equation}
On the Lie group level, let $\mathds{PR}$ be the  {complex connected} Lie subgroup of $G^{\C}$ with Lie algebra
$\mathfrak{pr}$ and  let $\mathds{Q}$ denote the  {complex connected} Lie subgroup of $G^{\C}$ with Lie algebra $\mathfrak{q}$.
Let $\mathfrak{W}$ denote the Weyl group of $G$. Then we have the decomposition
\begin{equation}\label{eq-pq-decom-G}
    G^{\C}=\bigcup_{\omega\in \mathfrak{W_{\xi}}}\mathds{PR}\cdot\omega\cdot\mathds{Q}.
\end{equation}
where $\mathfrak{W_{\xi}}$ is  {some quotient of $\mathfrak{W}$.} This follows from Corollary 3.2.3 of \cite{Do-Gr-Sz}.

\begin{theorem} \label{thm-PR-Q-decomposition}
 {The set $\mathds{PR} \cdot \mathds{Q} \subset G^{\C}$ obtained by pointwise multiplication is open in $G^\C$ and the pointwise multiplication map $ \mathds{PR}\times\mathds{Q} \longrightarrow  \mathds{PR}\cdot\mathds{Q}$ is biholomorphic.}
 \end{theorem}

\begin{proof} By Theorem 2.4.1 (a) of \cite{Do-Gr-Sz}, $ \mathds{PR}\cdot\mathds{Q}$ is open in $G^{\C}$. By Theorem 2.4.1 (b) of \cite{Do-Gr-Sz},
  $ \mathds{PR}\times\mathds{Q}\longrightarrow  \mathds{PR}\cdot\mathds{Q}$ is a holomorphic diffeomorphism.
\end{proof}

We also need the following  { lemma:}
 \begin{lemma} \label{lemma-}  {Let $\xi \in \mathcal{I}'$ so that $h=\exp(\pi\xi)$. Then we obtain}
 \begin{equation}\label{eq-pr-p}
    \mathfrak{pr}\cap \mathfrak{p}^{\C}= \sum_{j\geq0}g^{\xi}_{2j+1}.
 \end{equation}
 \end{lemma}

\begin{proof}
 {Recall that $\mathfrak{g}=\mathfrak{k}\oplus\mathfrak{p}$
	with $\mathfrak{k}=Lie(K)$, and $\mathfrak{g}^{\C}=\mathfrak{k}^{\C}\oplus\mathfrak{p}^{\C}$.}
We have that $\mathfrak{k}=\{~X\in\mathfrak{g}~|~hX=Xh\ \},~~\mathfrak{k}^{\C}=\{~X\in\mathfrak{g}^{\C}~|~hX=Xh\ \}.$
For any element $X$ in $g^{\xi}_{2j}$,
\[hXh^{-1}=\exp( \pi\xi)\cdot X\cdot\exp(-\pi\xi)=\exp(\pi \cdot ad\xi) X=e^{2 j\pi  \sqrt{-1} }X=X.\]
Similarly, for any element $X$ in $g^{\xi}_{2j+1}$,
\[hXh^{-1}=\exp( \pi\xi)\cdot X\cdot\exp(-\pi\xi)=\exp( \pi\cdot ad\xi) X=e^{(2j+1)\pi \sqrt{-1}}X=-X.\]
\end{proof}

 {Next we will apply Theorem \ref{thm-finite-uniton2} and Theorem \ref{thm-PR-Q-decomposition} to show that for a harmonic map $\mathcal F$ an extended frame with initial condition $e$, Theorem \ref{thm-finite-uniton2} still holds.}

 \begin{theorem} \label{thm-finite-uniton-in}
Let G be a connected, semisimple, compact Lie group with trivial center. Let $\mathcal{F}:M\rightarrow G/\hat{K}$ be a harmonic map of
 finite uniton type  into the compact inner symmetric space $G/\hat{K}$.  {Let $z_0 \in M$ and $F(z, \bar z, \lambda)$ an extended frame of $\mathcal{F}$ satisfying
  $F(z_0, \bar z_0, \lambda) = e$ for all $\lambda \in S^1$.  Then there exists some canonical $\xi_{can}\in \mathcal{I}'$,
some discrete subset $D'\subset M$, such that
$\mathfrak{C}_h(G/\hat{K})=\{ghg^{-1}| g\in G\}$ with $h=\exp (\pi \xi_{can})$,
and  the normalized potential of $\mathcal{F}$ has the form
\begin{equation}\eta=F_-^{-1}\mathrm{d}F_-=\lambda^{-1}\sum_{0\leq 2j\leq r(\xi_{can})-1} A_{2j}'\mathrm{d}z, \end{equation}}
 {where $A_{2j}':M\rightarrow \mathfrak{g}^{\xi_{can}}_{2j+1}$ is a meromorphic function with poles in $D'$ for each $j$}. In particular, the normalized potential of $ \mathcal{F}$ is contained in a nilpotent {Lie algebra
 and  it is invariant} under the fundamental group of $M$.
 \end{theorem}
 \begin{proof} {
 By  Proposition \ref{eq-check-xi}, there exists some $\tilde\xi\in\mathcal I'$ such that
$\Phi=\gamma_{\tilde\xi} F(z,\bar{z},  \lambda)F(z,\bar{z}, 1)^{-1}$ is  an extended solution  of $\mathbb F(z,\bar z,1)=F(z, \bar z, 1)hF(z, \bar z, 1)^{-1}$  which is ${\T}-$invariant and satisfies $\exp(\pi\tilde\xi)=h$. Then by Proposition 4.1 of \cite{BuGu}, there exists some $\hat\xi\in\mathcal I'$ which may not be canonical, and some discrete subset $D'\subset M$ such that on $M\backslash D'$ we have
\[\gamma^{-1}_{\hat\xi}\Phi=\gamma^{-1}_{\hat\xi} \cdot \exp C \cdot \gamma_{\hat\xi}\cdot W_+(z,\bar{z},\lambda)=\hat{F}_- (z,\lambda) W_+(z,\bar{z},\lambda), \text{where } \hat{F}_-(z,\lambda)= \gamma^{-1}_{\hat\xi}
\cdot \exp C \cdot \gamma_{\hat\xi},\]
with $C:M\rightarrow(\mathfrak{u}^0_{\hat\xi})_{\T}$ being meromorphic with poles in $D'$ and
{$W_+: M \rightarrow \Lambda^{+}G^{\C}$.} Note that {$\gamma_{\hat\xi}\in(\Omega_{alg}G)_{\T}$}
since both $\Phi$ and $\gamma_{\tilde\xi} F(z,\bar{z},  \lambda)F(z,\bar{z}, 1)^{-1}$ are contained in $(\Omega_{alg}G)_{\T}$. In particular, $\exp(\pi\hat\xi)=h$ and both of $\gamma_{\tilde\xi}^{-1}$ and $\gamma_{\hat\xi}$  take values in $\mathds{PR}$,  {where we define $\mathds{PR}$ by $\hat\xi$}. }

 {Let \[\hat{F}(z,\bar{z},\lambda)=\gamma_{\hat\xi}^{-1}\gamma_{\tilde\xi}F(z,\bar{z},\lambda).\]
Since all of $\gamma_{\xi}$, $\gamma_{\tilde\xi}$ and $F(z,\bar{z},\lambda)$ are $\sigma-$twisted, we see that $\hat{F}(z,\lambda)$ is $\sigma-$twisted and hence is an extended frame of $\mathcal F$ with different initial condition  {$\hat{F}(z_0,\bar{z}_0,\lambda)=\gamma_{\hat\xi}(\lambda)^{-1}\gamma_{\tilde\xi}(\lambda)$.}
Moreover, since $ \hat{F}_-(z,\lambda)=  {\gamma^{-1}_{\hat\xi} \cdot \exp C \cdot \gamma_{\hat\xi}}$ takes values in $\Lambda^-G^{\mathbb C}_{\sigma}$, we have the Birkhoff decomposition of $\hat F(z,\bar{z},\lambda)$:
\[\hat{F}=\gamma^{-1}_{\xi}\Phi(z,\bar{z},\lambda) F(z,\bar{z},1)=\hat{F}_-(z,\lambda) \hat{ V}_+(z,\bar{z},\lambda) \ \hbox{ with }\ \hat{ V}_+(z,\bar{z},\lambda)=W_+(z,\bar{z},\lambda)F(z,\bar{z},1).\]
Now near $z_0$, we also have the Birkhoff decomposition of $F(z,\bar{z},\lambda)$:
\[F(z,\bar{z},\lambda)=F_-(z,\lambda)V_+
=\gamma_{\tilde\xi}^{-1}\gamma_{\hat\xi}\hat{F}(z,\bar{z},\lambda)
=\gamma_{\tilde\xi}^{-1}\gamma_{\hat\xi}\hat{F}_-(z,\lambda)\hat{ V}_+(z,\bar{z},\lambda).\]
Decompose $\hat{ V}_+(z,\bar{z},\lambda)$ according to \eqref{eq-pq-decom-G}:
\[\hat{ V}_+(z,\bar{z},\lambda)=\mathbf{R}\omega \mathbf{Q}~~ \hbox{ with }~ \mathbf{R}\in\mathds{PR} \hbox{ and } \mathbf{Q}\in\mathds{Q}.\]
By Theorem \ref{thm-PR-Q-decomposition}, since $\hat V_+$ is holomorphic in $\lambda$ for all $\lambda\in\mathbb C$, $\mathbf{R}$ and  $\mathbf{Q}$ are also holomorphic in $\lambda$ for all $\lambda\in\mathbb C$, i.e., $\mathbf{R}, \mathbf{Q}\in\Lambda^+G^{\C}$.}

 {Since  $F(z,\bar{z},\lambda)\rightarrow e$ as $z\rightarrow z_0$, we obtain \[\gamma_{\tilde\xi}^{-1}\gamma_{\hat\xi}\hat{F}_-(z,\lambda) \mathbf{R}\omega \mathbf{Q}\rightarrow e, \hbox{ if $z\rightarrow z_0$}.\] Note that all of
$\gamma_{\tilde\xi}^{-1}$, $\gamma_{\hat\xi}$ and $\hat{F}_-(z,\lambda)$ take values in $\mathds{PR}$. Since $\mathds{PR}\cdot\mathds{Q}$ is open and $e\in\mathds{PR}\cdot\mathds{Q}$, $\omega=e$  near $z_0$. As a consequence, when $z\rightarrow z_0$,  $\gamma_{\tilde\xi}^{-1}\gamma_{\hat\xi}\hat{F}_-(z,\lambda) \mathbf{R}\rightarrow e$ and $\mathbf{Q}\rightarrow e$. Moreover, considering $F_-$, we obtain
\[F_-=(\gamma_{\tilde\xi}^{-1}\gamma_{\hat\xi}\hat{F}_-(z,\lambda) \mathbf{R})_-.\]
Since $\mathbf{R}\in\mathds {PR}$ and all of
$\gamma_{\tilde\xi}^{-1}$, $\gamma_{\hat\xi}$ and $\hat{F}_-(z,\lambda)$ also take values in $\mathds{PR}$, we see that $F_-$ also takes values in $\mathds{PR}$.
Consider \[\eta_-=F_-^{-1}\mathrm{d}F_-=\lambda^{-1}\eta_{-1}\mathrm{d}z.\] Since $F_-$ takes values in $\mathds{PR}$, $\eta_{-1}$ takes values in $\mathfrak{pr}$. On the other hand, $\eta_{-1}$ also takes values in $\mathfrak{p}^{\C}$. In a sum, by \eqref{eq-pr-p}, $\eta_{-1}$ takes values in \[\mathfrak{pr}\cap\mathfrak{p}^{\C}=\sum_{0\leq 2j\leq r(\hat\xi)-1} \mathfrak{g}^{\hat\xi}_{2j+1}.\]  Let $\xi_{can}$ be the canonical element derived from $\hat\xi$ as in Lemma \ref{lemma-xi}. By Lemma \ref{lemma-xi} we have
\[\sum_{0\leq j\leq r(\hat\xi)-1} \mathfrak{g}^{\hat\xi}_{j+1}=\sum_{0\leq j\leq r(\xi_{can})-1}  \mathfrak{g}^{\xi_{can}}_{j+1}.\]
From the proof of Theorem 5.4 of \cite{BuGu}, we also obtain
\[ \sum_{0\leq 2j\leq r(\hat\xi)-1} \mathfrak{g}^{\hat\xi}_{2j+1}=\sum_{0\leq 2j\leq r(\xi_{can})-1}  \mathfrak{g}^{\xi_{can}}_{2j+1}.\]
 So   $\eta_{-1}$ takes values in $\sum_{0\leq 2j\leq r(\xi_{can})-1}  \mathfrak{g}^{\xi_{can}}_{2j+1}$, which is contained in the same nilpotential Lie subalgebra as $\hat\eta_{-1}=\hat F_{-}^{-1}(\hat F_{-})_{z}$. Since $F(z_0,\bar{z}_0,\lambda)=e$, we have $F_{-}(z_0,\lambda)=e$ as well. }
  \end{proof}

\section{Harmonic maps of finite uniton type into non-compact inner symmetric spaces}

 {In the last section we have shown how one can relate the construction schemes of Burstall-Guest and DPW respectively of harmonic maps into compact inner symmetric spaces into each other. In this section, we will show that the DPW interpretation of the theory of Burstall and Guest \cite{BuGu} also holds for harmonic maps of finite uniton type into non-compact inner symmetric spaces. We will also review briefly the application of this theory to the coarse} classification of Willmore surfaces \cite{Wang-2}.

\subsection{ {The case of non-compact inner symmetric spaces}}
 {We will apply the results of  Section 3.3. Let $G/K$ is a non-compact inner symmetric space} and $U$  a maximal compact subgroup of the complexification $G^\C$ of $G$ which is left invariant by the natural real form involution of $G$ and the extension of $\sigma$ to $G^\C$. Combining Theorem \ref{thm-noncompact}, Theorem \ref{thm-finite-uniton2},   {Theorem \ref{thm-finite-uniton-in}} and Corollary \ref{cor-finite}, we obtain
\begin{theorem}\label{thm-finite-uniton-n-com}
Let $\mathcal{F}: \tilde{M}\rightarrow G/K$  be a harmonic map of finite uniton type, and
 ${\mathcal{F}_ {U}:} \tilde{M} \rightarrow U/(U\cap K^{\C})$ the compact dual harmonic map of $\mathcal{F}$, with base point $z_0\in\tilde{M}$  {such that $\mathcal{F}_{z=z_0}=eK$ and $(\mathcal{F}_{U})_{z=z_0}=e(U\cap K^{\C})$.}  {Then the normalized potential $\eta_-$ and the normalized extended framing  $F_-$ derived from  Theorem \ref{thm-finite-uniton-in} for
 $\mathcal{F}_U$,  provides also a normalized potential $\eta_-$ and a normalized extended framing $F_-$  of $\mathcal{F}$, with initial condition $F_-(z_0,\lambda)=e$.} {In particular, all harmonic maps of finite uniton type $\mathcal{F}: \tilde{M}\rightarrow G/K$  can be obtained in this way.}
\end{theorem}

 {\begin{remark}\
\begin{enumerate}
\item By Theorem \ref{thm-finite-uniton-n-com}, on the (nilpotent) normalized potential level, one can classify  all harmonic maps $\mathcal{F}: \tilde{M}\rightarrow G/K$ of finite uniton type by classifying  all harmonic maps of finite uniton type from $\tilde{M}$ to $U/(U\cap K^{\C})$.
\item Note that the DPW method yields a 1-1 relation between normalized potentials
and harmonic maps if one chooses a base point and an initial condition $e$ for all relevant extended frames.
\end{enumerate}
\end{remark}}
\subsection{Nilpotent normalized potentials of Willmore surfaces of finite uniton type}

We will end this paper with a  {brief view} of a coarse classification and a construction of new Willmore surfaces of finite uniton type, in the spirit of Section 4. To be concrete, we  give a coarse classification of Willmore two-spheres in $S^{n+2}$ by classifying all the possible nilpotent Lie sub-algebras related to the corresponding harmonic conformal Gauss maps, see Theorem 3.1, Theorem 3.3 of \cite{Wang-1}. This classification indicates that Willmore two spheres may inherit more complicated and new geometric structures.  Moreover, by concrete computations of Iwasawa decompositions, we construct new Willmore two-spheres (Section 3.1 and Section 3.2 of \cite{Wang-3}).

To state the coarse classification and constructions, we first recall that a Willmore surface $y:M\rightarrow S^{n+2}$ is globally related to a harmonic conformal Gauss map  $\mathcal{F}:M\rightarrow SO^+(1,n+3)/SO^+(1,3)\times SO(n)$ satisfying some isotropy condition (\cite{Bryant1984}, \cite{Ejiri1988}, \cite{DoWa11}, \cite{Wang-1}). Here
$SO^+(1,n+3)=SO(1,n+3)_0$ is the connected subgroup of
\begin{equation*}SO(1,n+3):=\{A\in Mat(n+4,\mathbb{C})\ |\ A^tI_{1,n+3}A=I_{1,n+3}, \det A=1,\ A=\bar{A}
\},
\end{equation*}
and the subgroup $K=SO^+(1,3)\times SO(n)\subset SO^+(1,n+3)$ is defined by the involution
 \begin{equation}\begin{array}{ll}
\sigma:    SO^+(1,n+3)&\rightarrow SO^+(1,n+3)\\
 \ \ \ \ \ \ \ A &\mapsto DAD^{-1}.
\end{array}\end{equation}
with $D=diag\{-I_4,I_n\}.$ Moreover
\begin{equation*} \mathfrak{so}(1,n+3):=\{A\in Mat(n+4,\mathbb{R})\ |\ A^tI_{1,n+3}+I_{1,n+3}A\}.
\end{equation*}
Let $\mathfrak{k}$ be the Lie algebra of $SO^+(1,3)\times SO(n)\subset SO^+(1,n+3)$ and $\mathfrak{so}(1,n+3)=\mathfrak{k}\oplus\mathfrak{p}$. Then
\[\mathfrak{k}=\left\{\left(
                               \begin{array}{cc}
                                 A_1 & 0 \\
                                 0 & A_2 \\
                               \end{array}
                             \right)| A_1\in Mat(4,\mathbb{R}),\ A_2\in Mat(n,\mathbb{R}),\ A_1^tI_{1,3}+I_{1,3}A_1=0,\ A_2^t+A_2=0
\right\}\]
and
\[\mathfrak{p}=\left\{\left(
                               \begin{array}{cc}
                                0 & B_1 \\
                                 -B_1^tI_{1,3} &0 \\
                               \end{array}
                             \right)| B_1\in Mat(4\times n,\mathbb{R})
\right\}.\]
 {It is straightforward to see that the compact dual of $SO^+(1,n+3)$ and $SO^+(1,3)\times SO(n)$ are $SO(n+4)$ and $SO(4)\times SO(n)$ respectively.}

 Following \cite{DoWa11} that we call a conformally harmonic map  $\mathcal{F}:M\rightarrow SO^+(1,n+3)/SO^+(1,3)\times SO(n)$ a strongly conformally \footnote{ {Note that in \cite{BW} the notion of ``strongly conformal" has been used in another sense.}} harmonic map if the $\mathfrak{p}^{\C}-$part of its Maurer-Cartan form,
\[\left(
                     \begin{array}{cc}
                       0 & {B}_1 \\
                       -{B}^{t}_1I_{1,3} & 0 \\
                     \end{array}
                   \right)\mathrm{d}z,\]
                   satisfies the isotropy condition
                   \begin{equation}\label{eq-isotropic}
                    B_1^tI_{1,3}B_1=0.
                   \end{equation}

                   Now assume $n+4=2m$ so that $SO^+(2m)/SO^+(1,3)\times SO(2m)$ is an inner symmetric space.
\begin{theorem}(Theorem 3.1 of \cite{Wang-1})
Let $\mathcal{F}:M\rightarrow SO^+(1,n+3)/SO^+(1,3)\times SO(n)$ be a finite-uniton type strongly conformally harmonic map, with $n+4=2m$. Let
$$\eta=\lambda^{-1}\left(
                     \begin{array}{cc}
                       0 & \hat{B}_1 \\
                       -\hat{B}^{t}_1I_{1,3} & 0 \\
                     \end{array}
                   \right)\mathrm{d}z $$
  be the normalized potential of $\mathcal{F}$, with $$\hat{B}_1=(\hat{B}_{11},\cdots,\hat{B}_{1,m-2}),\ \hat{B}_{1j}=(\mathrm{v}_{j },\hat{ \mathrm{v}}_{j })\in Mat(4\times 2,\mathbb{C}).$$
 Then up to some conjugation by a constant matrix, every $\hat{B}_{j1}$ of $\hat{B}_1$ has one of the two forms:
\begin{equation}(i) \ \mathrm{v}_{j}= \left(
                                          \begin{array}{ccccccc}
                                            h_{1j}    \\
                                            h_{1j}  \\
                                            h_{3j} \\
                                            ih_{3j}  \\
                                          \end{array}
                                        \right),\ \hat{ \mathrm{v}}_{j } =
                                        \left(
                                          \begin{array}{ccccccc}
                                            \hat{h}_{1j}    \\
                                            \hat{h}_{1j}   \\
                                            \hat{h}_{3j} \\
                                            i\hat{h}_{3j}   \\
                                          \end{array}
                                        \right);\
 (ii) \ \mathrm{v}_{j}= \left(
                                          \begin{array}{ccccccc}
                                            h_{1j}  \\
                                            h_{2j}    \\
                                            h_{3j}  \\
                                            h_{4j}  \\
                                          \end{array}
                                        \right),\ \ \hat{ \mathrm{v}}_{j } = i \mathrm{v}_{j}= \left(
                                          \begin{array}{ccccccc}
                                            ih_{1j}  \\
                                            ih_{2j}    \\
                                            ih_{3j}  \\
                                            ih_{4j}  \\
                                          \end{array}
                                        \right).
\end{equation}
And all of $\{\mathrm{v}_j,\  \hat{ \mathrm{v}}_{j }\}$ satisfy the equations
$$\mathrm{v}_j^tI_{1,3}\mathrm{v}_l=\mathrm{v}_j^tI_{1,3}\hat{\mathrm{v}}_l=\hat{\mathrm{v}}_j^tI_{1,3}\hat{\mathrm{v}}_l=0, \ j,l=1,\cdots,m-2.$$
In other words, there are $m-1$ types of normalized potentials with $\hat{B}_{1}$ satisfying $\hat{B}_{1 }^tI_{1,3}\hat{B}_{1 }=0$, namely those being of one of the following $m-1$ forms (up to some conjugation):

$(1)$ (all pairs are of type (i))

\begin{equation}
                         \hat{B}_{1}= \left(
                                          \begin{array}{ccccccc}
                                            h_{11} & \hat{h}_{11} &  h_{12} & \hat{h}_{12} &\cdots &  h_{1,m-2}& \hat{h}_{1,m-2} \\
                                             h_{11} & \hat{h}_{11} &  h_{12}& \hat{h}_{12}&\cdots &  h_{1,m-2} & \hat{h}_{1,m-2}\\
                                            h_{31}& \hat{h}_{31} &  h_{32}& \hat{h}_{32} &\cdots &  h_{3,m-2}& \hat{h}_{3,m-2} \\
                                            ih_{31}& i\hat{h}_{31} &  ih_{32}& i\hat{h}_{32}&\cdots &  ih_{3,m-2}& i\hat{h}_{3,m-2} \\
                                          \end{array}
                                        \right);\end{equation}

$(2)$ (the first pair is of type (ii), all others are of type (i))
\begin{equation}
                           \hat{B}_{1}= \left(
                                          \begin{array}{ccccccc}
                                            h_{11} & i {h}_{11} &  h_{12} & \hat{h}_{12} &\cdots &  h_{1,m-2}& \hat{h}_{1,m-2} \\
                                             h_{21} & i {h}_{21} &  h_{12}& \hat{h}_{12}&\cdots &  h_{1,m-2} & \hat{h}_{1,m-2}\\
                                            h_{31}& i {h}_{31} &  h_{32}& \hat{h}_{32} &\cdots &  h_{3,m-2}& \hat{h}_{3,m-2} \\
                                            h_{41}& i {h}_{41} &  ih_{32}& i\hat{h}_{32}&\cdots &  ih_{3,m-2}& i\hat{h}_{3,m-2} \\
                                          \end{array}
                                        \right);\end{equation}
Introducing consecutively  more pairs of type (ii), one finally arrives at

       $(m-1)$ (all pairs are of type (ii))
\begin{equation}
                           \hat{B}_{1}= \left(
                                          \begin{array}{ccccccc}
                                            h_{11} & i {h}_{11} &  h_{12} & i {h}_{12} &\cdots &  h_{1,m-2}& i{h}_{1,m-2} \\
                                             h_{21} & i {h}_{21} &  h_{22}& i {h}_{22}&\cdots &  h_{2,m-2} & i{h}_{2,m-2}\\
                                            h_{31}& i {h}_{31} &  h_{32} & i {h}_{32} &\cdots &  h_{3,m-2}& i{h}_{3,m-2} \\
                                            h_{41}& i {h}_{41} &  h_{42} & i {h}_{42}&\cdots &   h_{4,m-2}& ih_{4,m-2} \\
                                          \end{array}
                                        \right).\end{equation}
                            {Here $r(f)\leq 2$ for Case $(1)$ and Case $(m-1)$. For other cases, $r(f)\leq 3,4,5,6$ or $8$, depending on the structure of the potential.}

\end{theorem}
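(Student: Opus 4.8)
The plan is to reduce the statement to the Burstall--Guest normal form already prepared in Section~4 and then carry out the representation-theoretic bookkeeping in the Lie algebra at hand. First I would pass to the compact dual: by Theorem~\ref{thm-noncompact} the conformal Gauss map $\mathcal{F}$ into $SO^+(1,n+3)/SO^+(1,3)\times SO(n)$ and its dual harmonic map into the associated compact inner symmetric space share one and the same normalized potential, so the description of Section~4 applies verbatim. Hence by Theorem~\ref{thm-finite-uniton2}(ii) together with Theorem~\ref{thm-finite-uniton-in} the normalized potential may be taken in the form $\eta=\lambda^{-1}\eta_{-1}\,dz$, where $\eta_{-1}$ is a meromorphic function into the fixed nilpotent space $\bigl(\sum_{j\ge 0}\mathfrak{g}^{\xi}_{2j+1}\bigr)\cap\mathfrak{p}^{\C}$ for a canonical element $\xi$ normalized so that $\exp(\pi\xi)$ is conjugate to $D=\mathrm{diag}(-I_4,I_n)$ (the condition $G/\hat{K}\cong\{g\exp(\pi\xi)g^{-1}\}$ of Theorem~\ref{thm-finite-uniton1}). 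The whole problem then becomes: list the admissible $\xi$ and, for each, write the linear space $\bigl(\sum_j\mathfrak{g}^{\xi}_{2j+1}\bigr)\cap\mathfrak{p}^{\C}$ in the standard $4\times n$ block coordinates $\hat B_1=(\mathrm{v}_1,\hat{\mathrm{v}}_1,\dots,\mathrm{v}_{m-2},\hat{\mathrm{v}}_{m-2})$.

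Second, I would record the contribution of the isotropy (``strong conformal harmonicity''). Since the $\mathfrak{p}^{\C}$-part of $\eta_{-1}$ has off-diagonal blocks $\hat B_1$ and $-\hat B_1^{t}I_{1,3}$, the condition \eqref{eq-isotropic} is exactly $\hat B_1^{t}I_{1,3}\hat B_1=0$, which spelled out column by column is precisely the system $\mathrm{v}_j^{t}I_{1,3}\mathrm{v}_l=\mathrm{v}_j^{t}I_{1,3}\hat{\mathrm{v}}_l=\hat{\mathrm{v}}_j^{t}I_{1,3}\hat{\mathrm{v}}_l=0$ asserted in the conclusion. Geometrically this says the columns of $\hat B_1$ span a totally isotropic subspace $V$ of $(\C^4,I_{1,3})$; as a nondegenerate symmetric form on $\C^4$ has maximal isotropic subspaces of dimension $2$, we get $\dim V\le 2$. (It is worth noting in passing that $\hat B_1^{t}I_{1,3}\hat B_1=0$ already forces the cube of $\eta_{-1}$ to vanish, so nilpotency, hence finite uniton type, is automatic for such potentials, consistent with every harmonic two-sphere being of finite uniton type.)

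Third, the core is the explicit classification of the admissible $\xi$ and the resulting block patterns. Over $\C$ the target is $\mathfrak{so}(2m,\C)$ of type $D_m$, and $\sigma=\mathrm{Ad}(D)$ splits $\C^{2m}=\C^4\oplus\C^{n}$ with $\exp(\pi\xi)=-I$ on the first factor and $+I$ on the second. Writing $\xi=\mathrm{diag}(P,Q)$ accordingly, membership $\eta_{-1}\in\sum_j\mathfrak{g}^{\xi}_{2j+1}$ says that each column of $\hat B_1$ pairs a $Q$-eigenvector on $\C^{n}$ with a $P$-eigenvector on $\C^4$ whose $\mathrm{ad}\,\xi$-eigenvalues differ by an odd multiple of $\sqrt{-1}$. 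Because $\exp(\pi P)=-I_4$ the eigenvalues of $P$ are odd, and the nonzero-eigenvalue eigenspaces of $P$ are isotropic planes of $(\C^4,I_{1,3})$; this is exactly what forces the two possible shapes of a block, according to whether its two columns fall into a single such plane or spread along one null line. Running through the canonical elements of $D_m$ with $\exp(\pi\xi)$ in the conjugacy class of $D$ amounts to prescribing how many of the $m-2$ coordinate pairs of $\mathfrak{so}(n)=\mathfrak{so}(2(m-2))$ are ``activated'', producing a single integer parameter $k\in\{0,1,\dots,m-2\}$ and hence $m-1$ admissible classes. Normalizing each by the residual gauge group $K^{\C}=\bigl(SO(1,3)\times SO(n)\bigr)^{\C}$ brings the $m-2-k$ ``plane'' blocks into the form (i), with both columns in the fixed isotropic plane $\mathrm{span}\{(1,1,0,0)^{t},(0,0,1,i)^{t}\}$, and the $k$ ``line'' blocks into the form (ii), with $\hat{\mathrm{v}}_j=i\,\mathrm{v}_j$, giving precisely the lists $(1),(2),\dots,(m-1)$.

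I expect the main obstacle to be this third step: the explicit enumeration of the canonical elements $\xi$ of $D_m$ with $\exp(\pi\xi)$ in the class of $D$, and the verification that $K^{\C}$ acts transitively enough to place each block into exactly one of the two normal forms (i)/(ii) with the correct constant relating $\hat{\mathrm{v}}_j$ and $\mathrm{v}_j$. This is where the factor $\sqrt{-1}$ in (ii) and the rigidity of the fixed plane in (i) have to be pinned down; once the eigenspace structure is in hand, the isotropy relations in the conclusion are then either automatic from the mutual orthogonality of the $P$-eigenspaces or reduce to the residual null-and-orthogonality conditions displayed above.
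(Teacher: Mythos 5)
Your outline follows exactly the route the paper indicates for this theorem: reduce to the compact dual via Theorem \ref{thm-noncompact}, invoke the Burstall--Guest/DPW normal form of Section 4 (Theorems \ref{thm-finite-uniton2} and \ref{thm-finite-uniton-in}) so that $\eta_{-1}$ lands in $\bigl(\sum_{j}\mathfrak{g}^{\xi}_{2j+1}\bigr)\cap\mathfrak{p}^{\C}$ for a canonical $\xi$ with $\exp(\pi\xi)$ in the conjugacy class of $D$, and then impose the isotropy condition $\hat B_1^{t}I_{1,3}\hat B_1=0$ to get the displayed quadric relations and the two block shapes. Be aware, though, that the paper itself does not execute the decisive step --- the enumeration of the admissible canonical elements and the $K^{\C}$-normalization of each block into form (i) or (ii) --- but defers these ``lengthy and quite technical computations'' to \cite{Wang-1}, and your proposal likewise leaves exactly that core step as an acknowledged obstacle, so it is a correct strategy at essentially the same level of completeness as the text.
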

 {This theorem follows from the classification of nilpotent Lie sub-algebras related to the symmetric space  $SO^+(1,n+3)/SO^+(1,3)\times SO(n)$, together with a restriction  of the isotropy condition \eqref{eq-isotropic} on potentials (to derive Willmore surfaces). See \cite{Wang-1} for more details.}
\begin{example}(\cite{DoWa12}, \cite{Wang-3})\label{example}
 Let \[\eta=\lambda^{-1}\left(
                      \begin{array}{cc}
                        0 & \hat{B}_1 \\
                        -\hat{B}_1^tI_{1,3} & 0 \\
                      \end{array}
                    \right)\mathrm{d}z,\ ~ \hbox{ with } ~\ \hat{B}_1=\frac{1}{2}\left(
                     \begin{array}{cccc}
                       2iz&  -2z & -i & 1 \\
                       -2iz&  2z & -i & 1 \\
                       -2 & -2i & -z & -iz  \\
                       2i & -2 & -iz & z  \\
                     \end{array}
                   \right).\]
 {Each extended frame $F(z,\bar z,\lambda)$ derived  from this potential has singularities, while the corresponding harmonic maps and  the corresponding  Willmore two-spheres are globally well-defined.}
The associated family of Willmore two-spheres $x_{\lambda}$, $\lambda\in S^1$, corresponding to $\eta$, is
\begin{equation}\label{example1}
\begin{split}x_{\lambda}&=\frac{1}{ \left(1+r^2+\frac{5r^4}{4}+\frac{4r^6}{9}+\frac{r^8}{36}\right)}
\left(
                          \begin{array}{c}
                            \left(1-r^2-\frac{3r^4}{4}+\frac{4r^6}{9}-\frac{r^8}{36}\right) \\
                            -i\left(z- \bar{z})(1+\frac{r^6}{9})\right) \\
                            \left(z+\bar{z})(1+\frac{r^6}{9})\right) \\
                            -i\left((\lambda^{-1}z^2-\lambda \bar{z}^2)(1-\frac{r^4}{12})\right) \\
                            \left((\lambda^{-1}z^2+\lambda \bar{z}^2)(1-\frac{r^4}{12})\right) \\
                            -i\frac{r^2}{2}(\lambda^{-1}z-\lambda \bar{z})(1+\frac{4r^2}{3}) \\
                            \frac{r^2}{2} (\lambda^{-1}z+\lambda \bar{z})(1+\frac{4r^2}{3})  \\
                          \end{array}
                        \right)\\
  \end{split}
\end{equation}
 $x_{\lambda}:S^2\rightarrow S^6$ is a Willmore immersion in $S^6$, which is non S-Willmore, full, and totally isotropic. In particular, $x_\lambda$ does not have any branch points. The uniton number of $x_{\lambda}$ is $2$ and therefore its conformal Gauss map is $S^1-$invariant by Corollary 5.6 of \cite{BuGu}.
\end{example}

{  \bf Acknowledgements}\ \
This work was started when  {PW} visited the Department of Mathematics of TU  M\"{u}nchen, and  the Department of Mathematics of Tuebingen University.  The paper was continued and finished during several mutual visits of the authors.
They would like to express their sincere gratitude for both the hospitality and the financial support.  {PW}  is thankful to Professor Changping Wang and Xiang Ma for their suggestions and encouragement.  {PW} was partly supported by the Project 11971107 of NSFC. PW is also thankful to the ERASMUS MUNDUS TANDEM Project for the financial supports to visit the TU M\"{u}nchen.

{\footnotesize
\def\refname{References}

}
{\small\

 Josef F. Dorfmeister

Fakult\" at f\" ur Mathematik, TU-M\" unchen,

Boltzmann str.3, D-85747, Garching, Germany

{\em E-mail address}: dorfm@ma.tum.de\\

Peng Wang

School of Mathematics and Statistics, FJKLMAA,

Fujian Normal University, Qishan Campus,

Fuzhou 350117, P. R. China

{\em E-mail address}: {pengwang@fjnu.edu.cn}

\end{document}